\newtheorem{theorem}{Theorem}[section]
\newtheorem{definition}{Definition}[section]
\newtheorem{lemma}[theorem]{Lemma}
\newtheorem{proposition}[theorem]{Proposition}
\newtheorem{corollary}[theorem]{Corollary}
\newcommand\scalemath[2]{\scalebox{#1}{\mbox{\ensuremath{\displaystyle #2}}}}
\newcommand{\TITLE}[2][]{
  \begin{center} \Large \bfseries #2%
  \def\@temp{#1}\ifx\@temp\@empty\relax\else\footnote{#1}\fi\end{center}}
\newcommand{\AUTHOR}[2][]{\textbf{#2}%
  \def\@temp{#1}\ifx\@temp\@empty\relax\else\textsuperscript{#1}\fi}
\newcommand{\ADDRESS}[2]{\noindent
  \textsuperscript{#1}\parbox[t]{0.5\textwidth}{#2} \vspace{6pt}}
\def\ps@pprintTitle{%
 \let\@oddhead\@empty
 \let\@evenhead\@empty
 \def\@oddfoot{}%
 \let\@evenfoot\@oddfoot}
\newcommand{\KEYWORDS}[1]{\paragraph{Keywords:} #1}
\newcommand{\AMSCLASSIFICATION}[1]{\paragraph{Mathematics Subject Classification 2000:} #1}
\begin{document}

\TITLE[This work was funded by project 20928/PI/18 (Proyecto financiado por la Comunidad Aut\'onoma de la Regi\'on de Murcia a trav\'es de la convocatoria de Ayudas a proyectos para el desarrollo de investigaci\'on cient\'ifica y t\'ecnica por grupos competitivos, incluida en el Programa Regional de Fomento de la Investigaci\'on Cient\'ifica y T\'ecnica (Plan de Actuaci\'on 2018) de la Fundaci\'on S\'eneca-Agencia de Ciencia y Tecnolog\'ia de la Regi\'on de Murcia), by the national research project MTM2015- 64382-P (MINECO/FEDER), by AFOSR grant FA9550-20-1-0055, by NSF grant DMS-1719410 and by Spanish MINECO project MTM2017-83942-P]{A new WENO-2r algorithm with progressive order of accuracy close to discontinuities}

\author{
Sergio Amat\thanks{
 Departamento de Matem\'atica Aplicada y Estad\'{\i}stica.
   Universidad  Polit\'ecnica de Cartagena (Spain).
e-mail:{\tt sergio.amat@upct.es}}\and
Juan Ruiz \thanks{ Departamento de Matem\'atica Aplicada y Estad\'{\i}stica.
   Universidad  Polit\'ecnica de Cartagena (Spain).
e-mail:{\tt juan.ruiz@upct.es}}
\and Chi-Wang Shu\thanks{Division of Applied Mathematics. Brown University. Providence, Rhode Island, USA.
e-mail:{\tt chi-wang\_shu@brown.edu}}
\and Dionisio F. Y\'a\~nez\thanks{Departamento de Matem\'aticas, Facultad de Matemáticas. Universidad de Valencia. Valencia, Spain.
e-mail:{\tt dionisio.yanez@uv.es}}
}

\begin{center}
 \AUTHOR[1]{Sergio Amat},
\AUTHOR[2]{Juan Ruiz},
\AUTHOR[3]{Chi-Wang Shu},
\AUTHOR[4]{Dionisio F. Y\'a\~nez}

\end{center}

\KEYWORDS{WENO-2r, high accuracy interpolation, improved adaption to discontinuities, generalization}
\AMSCLASSIFICATION{41A05, 41A10, 65D05, 65M06, 65N06}
\vspace{1cm}

\ADDRESS{1}{Department of Applied Mathematics and Statistics, Universidad Polit\'ecnica de Cartagena (UPCT) (Spain).
\texttt{sergio.amat@upct.es}}
 \quad
 \ADDRESS{2}{Department of Applied Mathematics and Statistics, Universidad Polit\'ecnica de Cartagena (UPCT) (Spain).
\texttt{juan.ruiz@upct.es}}

 \ADDRESS{3}{Division of Applied Mathematics. Brown University. Providence, Rhode Island, USA.
\texttt{chi-wang\_shu@brown.edu}}
 \quad
\ADDRESS{4}{Departamento de Matem\'aticas, Facultad de Matemáticas. Universidad de Valencia. Valencia, Spain.
\texttt{dionisio.yanez@uv.es}}

\small
\begin{abstract}
In this article we present a modification of the algorithm for data discretized in the point values introduced in [S. Amat, J. Ruiz, C.-W. Shu, On a new WENO algorithm of order $2r$ with improved accuracy close to discontinuities, App. Math. Lett. 105 (2020), 106-298]. In the aforementioned work, we managed to obtain an algorithm that reaches a progressive and optimal order of accuracy close to discontinuities for WENO-6. For higher orders, i.e. WENO-8, WENO-10, etc. We have found that the previous algorithm presents some {\it shadows} in the detection of discontinuities, meaning that the order of accuracy is better than the one attained by WENO of the same order, but not optimal. In this article we present a modification of the smoothness indicators used in the original algorithm, oriented to solve this problem and to attain a WENO-2r algorithm with progressive order of accuracy close to the discontinuities. We also present proofs for the accuracy and explicit formulas for all the weights used for any order $2r$ of the algorithm.
\end{abstract}

\section{Introduction: Classical WENO algorithm}\label{introduction}
WENO (weighted essentially non oscillatory) algorithm \cite{Liu, JiangShu, doi:10.1137/100791579, AMB, Henrick2005542, Castro20111766, Shu1998, Shu1999, doi:10.1137/070679065, Shu2016} was designed to use the stencil of ENO (essentially non oscillatory)  algorithm \cite{MR881365, Harten1987231} and to behave in a similar way close to discontinuities, while improving the accuracy at smooth zones. WENO is written using a convex combination of all the interpolating polynomials that share the central interval of the global stencil used. In order to provide an adaptive approximation, the weights of the combination are nonlinear and based on an efficient estimation of the smoothness of each sub-stencil using what are called {\it smoothness indicators} \cite{JiangShu}. In \cite{generalizacion} we proposed a generalization of the algorithm introduced in\cite{WENO_nuevo}, where we aimed to provide a WENO-6 algorithm with improved accuracy close to singularities, while keeping the maximum possible accuracy at smooth zones. It is known that WENO algorithm does not attain the maximum possible accuracy close to discontinuities when there is more than one smooth sub-stencil. Although improving the accuracy of classical WENO algorithm close to discontinuities, we find that the technique presented in \cite{generalizacion} does not attain the maximum possible accuracy at some intervals when $r$ grows. In this article we solve this problem using a simple strategy that has to do with the design of the smoothness indicators of high order and we proof in general the accuracy of the new algorithm. We also give explicit formulas for all the weights used in the algorithm Finally, we particularize the proofs for low values of $r$ that are the most used in practice.

Let's now introduce how the WENO algorithm works. Let be $X$ a uniform partition of the interval
$[a, b]$ in $J$ subintervals, 
$$X=\{x_i\}^{J}_{i=0}, \quad x_i=a+i\cdot h, \quad h=\frac{b-a}{J}.$$
We will consider the point value discretization of the piecewise smooth function $f$ at the nodes $x_i$,
\begin{equation}\label{pv}
f_i=f(x_i),\, i=0,\hdots,J, \quad f=\left\{f_i\right\}_{i=0}^{J},
\end{equation}
and we will suppose that discontinuities are located far enough from each other, meaning that in a stencil we can only find one discontinuity.

In order to interpolate in the interval $(x_{i-1}, x_i)$, WENO-2r algorithm uses the stencil $\{x_{i-r},\cdots, x_{i+r-1}\}$, that is composed of $2r$ nodes.
Using the previous stencil, we can construct the convex combination,
\begin{equation}\label{convex_comb}
\mathcal{I}\left(x;f\right)=\sum_{k=0}^{r-1} \omega_{k}^r p_{k}^r(x),
\end{equation}
with the positive weights $\omega_{k}^r\ge 0, \, k=0,\cdots, r-1$ and assuring that
$\sum_{k=0}^{r-1}\omega_{k}^r=1$. In the expression (\ref{convex_comb}), the $r^{th}$ degree interpolation polynomials are denoted by $p_{k}^r(x)$.
We construct the interpolation at the mid point of the interval $(x_{i-1}, x_i)$, that will be denoted as $x_{i-\frac{1}{2}}$,
\begin{equation}\label{PO}
\mathcal{I}(x_{i-\frac{1}{2}};f)=\sum_{k=0}^{r-1}
\omega_{k}^r p_{k}^r(x_{i-\frac{1}{2}}).
\end{equation}
The values of the $\omega_{k}^r$ are forced to be those that allow to obtain order of accuracy $2r$ at $x_{i-\frac{1}{2}}$ when the stencil is smooth. When interpolating the discretized function $\{f(x_i)\}_{i=0}^{2r}$, the objective is to obtain an interpolation polynomial that satisfies,
\begin{equation}\nonumber
p_{0}^{2r-1}(x_{i-\frac{1}{2}})
=f(x_{i-{\frac{1}{2}}})+O(h^{2r}),
\end{equation}
based on the big stencil $\{x_{i-r},\cdots, x_{i+r-1}\}$, through the convex combination of the $r$ consecutive interpolation polynomials of order $r$,
\begin{equation}\nonumber
p_{k}^{r}(x_{i-\frac{1}{2}})=f(x_{i-{\frac{1}{2}}})
+O(h^{r+1}).
\end{equation}
We start reviewing classical WENO algorithm and its properties.

The classical WENO-2r interpolator, $\bar{\mathcal{I}}\left(x;f\right)$, imposes that the optimal weights are $\bar{C}_{k}^r\ge 0$, with $k=0,\hdots,r-1$, and
$\sum_{k=0}^{r-1} \bar{C}_{k}^r=1$, such that,
\begin{equation}\label{suma_pond}
p_{0}^{2r-1}\left(x_{i-\frac{1}{2}}\right)
=\sum_{k=0}^{r-1}\bar{C}_{k}^r p_{k}^r\left(x_{i-\frac{1}{2}}\right).
\end{equation}
A formula for the optimal weights is obtained in \cite{AMB},
\begin{equation}\label{opt_w}
\bar{C}_{k}^r=\frac{1}{2^{2r-1}}
\binom{2r}{2k+1}
,  \quad k=0,\cdots, r-1.
\end{equation}
The weights $\bar{\omega}_{k}^r$ are designed \cite{Liu} in order to satisfy at smooth zones that,
\begin{equation}\label{omega}
\bar{\omega}_{k}^r=\bar{C}_{k}^r+O(h^\kappa),\quad k=0,\cdots, r-1,
\end{equation}
with $\kappa\le r-1$, assuring that the interpolation in (\ref{PO}) attains order of accuracy $2r$ when $\kappa=r-1$, and
\begin{equation}\label{omega2}
f(x_{i-\frac{1}{2}})-\bar{\mathcal{I}}\left(x_{i-\frac{1}{2}};f\right)=O(h^{r+\kappa+1}),
\end{equation}
that matches the accuracy attained by the interpolation polynomial $p_{0}^{2r-1}(x)$ of $2r$ points. In \cite{Liu, JiangShu} the authors propose the following expressions for the nonlinear weights,
\begin{equation}\label{pesos}
\bar{\omega}_{k}^r=\frac{\bar{\alpha}_{k}^r}{\sum_{j=0}^{r-1}\bar{\alpha}_{j}^r},\quad  \textrm{ where } \bar{\alpha}_{k}^r=\frac{\bar{C}_{k}^r}{(\epsilon+\bar{I}_k^r)^t}, \quad k=0,\cdots, r-1,
\end{equation}
with $\sum_{k=0}^{r-1}\bar{\omega}_k^r=1$. In the previous expression, the parameter $t$ is an integer that assures maximum order of accuracy close to the discontinuities. The parameter $\epsilon>0$ is introduced to avoid divisions by zero and is usually forced to take the size of the smoothness indicators at smooth zones. In our numerical tests, we will set it to $\epsilon=10^{-16}$.  The values $\bar{I}_k^r$ are called {\it smoothness indicators} for $f(x)$ on each sub-stencil of $r$ points. The expression for the $\bar{I}_k^r$ initially given in \cite{AMB} is,
\begin{equation}\label{si_abm}
\bar{I}_k^r=\sum_{l=1}^{r-1} h^{2l-1}\int_{x_{i-1}}^{x_{i}}\left(\frac{d^l}{dx^l}p^r_{k}(x)\right)^2 dx.
\end{equation}
 In this paper, we generalize and improve the algorithms presented in \cite{generalizacion, WENO_nuevo} achieving maximum order of accuracy in the intervals close to the discontinuities for any value of $r$. We introduce the notation and review the previous results in Section \ref{resultadosprevios}. In order to design the new algorithm, the optimal and nonlinear weights are presented in Section \ref{newweno}. Afterwards, new smoothness indicators and its properties are proved in Section \ref{smoothnessindicators}. In Section \ref{analisisaccuracy}, we analyze the accuracy of the new method and finally we perform some experiments comparing the new method with the classical WENO and the new method in Section \ref{numexp}. 

\section{Review of previous results: The cases $r=3$ and $r=4$}\label{resultadosprevios}
In \cite{generalizacion, WENO_nuevo} it was presented a new WENO-2r algorithm that improves the resolution of classical WENO algorithms close to discontinuities. For WENO-6, i.e. r=3, the pattern of accuracy obtained with the new algorithm was $\cdots,O(h^6), O(h^5),O(h^4), O(1), O(h^4), O(h^5), O(h^6), \cdots$, that is the optimal accuracy that we can expect to obtain in the presence of a singularity (the one obtained by the classical WENO is typically $\cdots,O(h^6), O(h^4),O(h^4), O(1), O(h^4),\\ O(h^4), O(h^6), \cdots$). For higher orders, i.e. $r=4, 5, \cdots$, and despite of the fact that the new algorithm obtains a better accuracy than the classical WENO algorithm close to discontinuities, the theoretical pattern of accuracy obtained was not optimal, being for $r=4$, $\cdots, O(h^8), O(h^7), O(h^5), O(h^5), O(1),O(h^5), O(h^5), O(h^7), O(h^8) \cdots$, or for $r=5$, $\cdots, O(h^{10}), O(h^9), O(h^6), O(h^6),O(h^6),  O(1),  O(h^6), O(h^6), O(h^6), O(h^9), O(h^{10}) \cdots$, and so on.

The construction proposed in \cite{generalizacion} was based on a progressive construction of the building polynomials of WENO algorithm. This construction was based upon the observation that Lagrange interpolating polynomials of high order can be constructed from polynomials of lower order using a dyadic architecture. For example, for $r=3$ (stencil of 6 points) we use the polynomials of degree 3, $p^3_0(x), p^3_{1}(x)$ and $p^3_{2}(x)$ to write polynomials of degree 4. For a stencil of 6 points there exist two different Lagrange interpolating polynomials of degree 4 (stencil of 5 points), and we will note them as $p_0^4(x), p_1^4(x)$. There also exists one polynomial of degree 5 (stencil of 6 points), that we will note as $p_0^5(x)$. Now, it is clear that we can proceed to construct the polynomials of degree 4 using the polynomials of degree 3 as building blocks, we use the following notation:
\begin{equation}\label{w_opt_5_1}
\begin{aligned}
p^4_{0}(x_{i-1/2})&=C^3_{0,0}p^3_{0}(x_{i-1/2})+C^3_{0,1} p^3_{1}(x_{i-1/2})=\frac{3}{8} p^3_{0}(x_{i-1/2})+\frac{5}{8} p^3_{1}(x_{i-1/2}),\\
p^4_{1}(x_{i-1/2})&=C^3_{1,1}p^3_{1}(x_{i-1/2})+C^3_{1,2} p^3_{2}(x_{i-1/2})=\frac{5}{8} p^3_{1}(x_{i-1/2})+\frac{3}{8}  p^3_{2}(x_{i-1/2}).
\end{aligned}
\end{equation}
And use the polynomials of degree 4 as building blocks to construct the polynomial of degree 5,
\begin{equation}\label{w_opt_5_2}
\begin{aligned}
p^5_{0}(x_{i-1/2})&= C^4_{0,0}p^4_{0}(x_{i-1/2})+C^4_{0,1}p^4_{1}(x_{i-1/2})=\frac{1}{2} p^4_{0}(x_{i-1/2})+\frac{1}{2}p^4_{1}(x_{i-1/2}).
\end{aligned}
\end{equation}

Once we have reached this point, in \cite{generalizacion} we proposed to use the vectors of optimal weights ${\bf C_{0}^4, C_{1}^4}$ in the classical WENO algorithm. These vectors have as coordinates the weights in (\ref{w_opt_5_1}),
\begin{equation}\label{nl_op_w1}
\begin{aligned}
{\bf C_{0}^4}=\left(C^3_{0,0},C^3_{0,1}, 0\right)=\left(\frac{3}{8}, \frac{5}{8}, 0\right),\\
{\bf C_{1}^4}=\left(0, C^3_{1,1}, C^3_{1,2}\right)=\left(0, \frac{5}{8},\frac{3}{8}\right).
\end{aligned}
\end{equation}
The stencil that we use in this case is composed of data at the positions $\{x_{i-3}, x_{i-2}, x_{i-1}, x_{i}, x_{i+1}, x_{i+2}\}$. It is clear that it is convenient to use ${\bf C_{0}^4}$ when there is a discontinuity placed in $(x_{i+1}, x_{i+2})$ and ${\bf C_{1}^4}$ if it is placed in the interval $(x_{i-3}, x_{i-2})$. The objective is to obtain the weights in (\ref{opt_w}) for $r=3$ if the stencil does not contain any discontinuities, so that maximum accuracy is attained everywhere. In \cite{generalizacion} we proposed the weighted average of the vectors in (\ref{nl_op_w1}),
\begin{eqnarray}\label{nl_op_w22}
C^4_{0,0}{\bf C_{0}^4}+C^4_{0,1}{\bf C_{1}^4}=\frac{1}{2}{\bf C_{0}^4}+\frac{1}{2}{\bf C_{1}^4}=\frac{1}{2}\left(\frac{3}{8}, \frac{5}{8}, 0\right)+\frac{1}{2}\left(0, \frac{5}{8}, \frac{3}{8}\right)=\left(\frac{3}{16}, \frac{10}{16},\frac{3}{16}\right)=\left(\bar{C}_0^3, \bar{C}_1^3,  \bar{C}_2^3\right)={\bf \bar{C}^{3}}.
\end{eqnarray}
The reader can observe how the construction reminds us a WENO algorithm computed in several steps in order to grow once at a time the accuracy of the final interpolant. The obvious pace, leads us to define nonlinear weights for replacing the constant weights in (\ref{nl_op_w22}). We will represent the weights by
\begin{equation}\label{pesos31}
\begin{aligned}
\tilde{\omega}^4_{0,0}=\frac{\tilde{\alpha}_{0,0}^4}{\tilde{\alpha}_{0,0}^4+\tilde{\alpha}_{0,1}^4},\quad
\tilde{\omega}^4_{0,1}=\frac{\tilde{\alpha}_{0,1}^4}{\tilde{\alpha}_{0,0}^4+\tilde{\alpha}_{0,1}^4},
\end{aligned}
\end{equation}
with,
\begin{equation}\label{pesos2}
\begin{aligned}
\tilde{\alpha}_{0,0}^4=\frac{C^4_{0,0}}{(\epsilon+\tilde{I}_{0,0}^4)^t}=\frac{1/2}{(\epsilon+\tilde{I}_{0,0}^4)^t},\quad
\tilde{\alpha}_{0,1}^4=\frac{C^4_{0,1}}{(\epsilon+\tilde{I}_{0,1}^4)^t}=\frac{1/2}{(\epsilon+\tilde{I}_{0,1}^4)^t}.
\end{aligned}
\end{equation}
The result of this process is an expression for the adapted optimal weights of the classical WENO algorithm, that assure optimal accuracy for $r=3$, and that replace the classical constant optimal weights $\bar{C}_k^r$ in (\ref{pesos}),
\begin{eqnarray}\label{pesos3}
{\bf \tilde{C}^{3}}=(\tilde{C}_0^3,\tilde{C}_1^3,\tilde{C}_2^3)=\tilde{\omega}_{0,0}^4{\bf C_0^4}+\tilde{\omega}^4_{0,1}{\bf C_1^4}.
\end{eqnarray}
The smoothness indicators $\tilde{I}^4_{0,k_1}$, $k_1=0,1$ in (\ref{pesos2}) will be defined in Section \ref{smoothnessindicators} based on those introduced in \cite{WENO_nuevo,smooth}, that work well for detecting kinks and jumps in the function if the data is discretized in the point values (\ref{pv}). Thus, finally, we apply WENO with the new nonlinear weights, i.e., we calculate:
\begin{equation*}
\tilde{\mathcal{I}}(x_{i-\frac{1}{2}};f)=\sum_{k=0}^2
\tilde{\omega}_{k}^3 p_{k}^3(x_{i-\frac{1}{2}}).
\end{equation*}
with
\begin{equation}\label{pesos31finales}
\begin{aligned}
\tilde{\omega}^3_{k}=\frac{\tilde{\alpha}_{k}^3}{\sum_{j=0}^2 \tilde{\alpha}_j},\quad \text{and} \quad \tilde{\alpha}_{k}^3=\frac{\tilde{C}^3_{k}}{(\epsilon+\tilde{I}_{k}^3)^t}, \quad k=0,1,2,
\end{aligned}
\end{equation}
being $\tilde{I}_{k}^3$, $k=0,1,2$, the smoothness indicators proposed in \cite{smooth}.

For $r=4$ and higher values of $r$, it is possible to follow similar steps. In order to design the new WENO-8 algorithm we start with a stencil of 8 points composed of data placed at the positions $\{x_{i-4}, x_{i-3}, x_{i-2}, x_{i-1}, x_{i}, x_{i+1}, x_{i+2}, x_{i+3}\}$. In this case there exist four polynomials of degree four (stencil of five points), that will be noted by $p_0^4(x), p_1^4(x), p_2^4(x), \\p_3^4(x)$, three of degree five (stencil of six points), that we will denote by $p_0^5(x), p_1^5(x), p_2^5(x)$, two of degree six (stencil of seven points), denoted as $p_0^6(x), p_1^6(x)$ and one of degree seven (stencil of eight points), denoted as $p_0^7(x)$. The process is similar as before: we try to obtain nonlinear optimal weights that assure the optimal accuracy that the data of the stencil provides. As we did for $r=3$, we write the polynomials of degree five using the polynomials of degree four as building blocks,
\begin{equation}\label{pol5_8}
\begin{aligned}
p^5_{0}(x_{i-1/2})&=C^4_{0,0} p^4_{0}(x_{i-1/2})+C^4_{0,1} p^4_{1}(x_{i-1/2})=\frac{3}{10} p^4_{0}(x_{i-1/2})+\frac{7}{10} p^4_{1}(x_{i-1/2}),\\
p^5_{1}(x_{i-1/2})&=C^4_{1,1} p^4_{1}(x_{i-1/2})+C^4_{1,2}  p^4_{2}(x_{i-1/2})=\frac{1}{2} p^4_{1}(x_{i-1/2})+\frac{1}{2}  p^4_{2}(x_{i-1/2}),\\
p^5_{2}(x_{i-1/2})&=C^4_{2,2} p^4_{2}(x_{i-1/2})+C^4_{2,3}  p^4_{3}(x_{i-1/2})=\frac{7}{10} p^4_{2}(x_{i-1/2})+\frac{3}{10}  p^4_{3}(x_{i-1/2}).
\end{aligned}
\end{equation}
We write the polynomials of degree six using the polynomials of degree five as building blocks,
\begin{equation}\label{pol6_8}
\begin{aligned}
p^6_{0}(x_{i-1/2})&=C^5_{0,0} p^5_{0}(x_{i-1/2})+C^5_{0,1} p^5_{1}(x_{i-1/2})=\frac{5}{12} p^5_{0}(x_{i-1/2})+\frac{7}{12} p^5_{1}(x_{i-1/2}),\\
p^6_{1}(x_{i-1/2})&=C^5_{1,1} p^5_{1}(x_{i-1/2})+C^5_{1,2}  p^5_{2}(x_{i-1/2})=\frac{7}{12} p^5_{1}(x_{i-1/2})+\frac{5}{12}  p^5_{2}(x_{i-1/2}).
\end{aligned}
\end{equation}
In this last step, the polynomial of degree seven is written in terms of the two polynomials of degree six,
\begin{equation}\label{pol7_8}
\begin{aligned}
p^7_{0}(x_{i-1/2})&=C^6_{0,0} p^6_{0}(x_{i-1/2})+C^6_{0,1} p^6_{1}(x_{i-1/2})=\frac{1}{2} p^6_{0}(x_{i-1/2})+\frac{1}{2} p^6_{1}(x_{i-1/2}).\\
\end{aligned}
\end{equation}
In a similar fashion as we did for $r=3$, we construct the vectors of weights ${\bf C_{0}^5, C_{1}^5, C_{2}^5}$, that will have as coordinates the constant weights in (\ref{pol5_8}),
\begin{eqnarray}\label{nl_op_w}
\scalemath{0.9}{
{\bf C_{0}^5}=\left(C^4_{0,0} , C^4_{0,1} , 0, 0\right)=\left(\frac{3}{10}, \frac{7}{10}, 0, 0\right),\quad {\bf C_{1}^5}=\left(0,C^4_{1,1} , C^4_{1,2} , 0\right)=\left(0, \frac{1}{2},\frac{1}{2}, 0\right),\quad {\bf C_{2}^5}=\left(0,0,C^4_{2,2} , C^4_{2,3} \right)=\left(0, 0, \frac{7}{10},\frac{3}{10}\right).
}
\end{eqnarray}
If we multiply the previous vectors by the constant weights calculated in (\ref{pol6_8}) and (\ref{pol7_8}) we directly obtain the weights that assure optimal accuracy at smooth zones, as they are the constant optimal weights used by the classical WENO-8 algorithm, i.e.
\begin{equation}\label{nl_op_w2}
\begin{split}
C^6_{0,0}\left(C^5_{0,0}{\bf C_{0}^5}+C^5_{0,1}{\bf C_{1}^5}\right)+C^6_{0,1}\left(C^5_{1,1}{\bf C_{1}^5}+C^5_{1,2}{\bf C_{2}^5}\right)&=\frac{1}{2}\left(\frac{5}{12}{\bf C_{0}^5}+\frac{7}{12}{\bf C_{1}^5}\right)+\frac{1}{2}\left(\frac{7}{12}{\bf C_{1}^5}+\frac{5}{12}{\bf C_{2}^5}\right)\\
&=\left(\frac{1}{16}, \frac{7}{16},\frac{7}{16},\frac{1}{16}\right)\\
&=\left(\bar{C}_0^4, \bar{C}_1^4,  \bar{C}_2^4, \bar{C}_3^4\right)={\bf \bar{C}^{4}}\\
\end{split}
\end{equation}
It is clear that we can replace the constant weights in (\ref{nl_op_w2}) by nonlinear weights. As before, we will represent by $\tilde{\omega}_{k,k_1}^l$, $5\leq l\leq 6$ and $0 \leq k\leq 6-l$, $k_1=k, k+1$; the nonlinear weights are,
\begin{equation}\label{pesos4}
\scalemath{0.9}{
\begin{aligned}
\tilde{\omega}^5_{0,0}=\frac{\tilde{\alpha}_{0,0}^5}{\tilde{\alpha}_{0,0}^5+\tilde{\alpha}_{0,1}^5},\quad
\tilde{\omega}^5_{0,1}=\frac{\tilde{\alpha}_{0,1}^5}{\tilde{\alpha}_{0,0}^5+\tilde{\alpha}_{0,1}^5},\quad
\tilde{\omega}^5_{1,1}=\frac{\tilde{\alpha}_{1,1}^5}{\tilde{\alpha}_{1,1}^5+\tilde{\alpha}_{1,2}^5},\quad
\tilde{\omega}^5_{1,2}=\frac{\tilde{\alpha}_{1,2}^5}{\tilde{\alpha}_{1,1}^5+\tilde{\alpha}_{1,2}^5},\quad
\tilde{\omega}^6_{0,0}=\frac{\tilde{\alpha}_{0,0}^6}{\tilde{\alpha}_{0,0}^6+\tilde{\alpha}_{0,1}^6},\quad
\tilde{\omega}^6_{0,1}=\frac{\tilde{\alpha}_{0,1}^6}{\tilde{\alpha}_{0,0}^6+\tilde{\alpha}_{0,1}^6},
\end{aligned}
}
\end{equation}
with,
\begin{equation}\label{pesos4_2rr}
\scalemath{0.9}{
\begin{aligned}
\tilde{\alpha}_{0,0}^5=\frac{C^5_{0,0}}{(\epsilon+\tilde{I}_{0,0}^5)^t},\quad
\tilde{\alpha}_{0,1}^5=\frac{C^5_{0,1}}{(\epsilon+\tilde{I}_{0,1}^5)^t},\quad
\tilde{\alpha}_{1,1}^5=\frac{C^5_{1,1}}{(\epsilon+\tilde{I}_{1,1}^5)^t},\quad
\tilde{\alpha}_{1,2}^5=\frac{C^5_{1,2}}{(\epsilon+\tilde{I}_{1,2}^5)^t},\quad
\tilde{\alpha}_{0,0}^6=\frac{C^6_{0,0}}{(\epsilon+\tilde{I}_{0,0}^6)^t},\quad
\tilde{\alpha}_{0,1}^6=\frac{C^6_{0,1}}{(\epsilon+\tilde{I}_{0,1}^6)^t}, \end{aligned}
}
\end{equation}
that replacing the values of the $C_{k,k_1}^l$ given in (\ref{nl_op_w2}) result in,
\begin{equation}\label{pesos4_2}
\scalemath{0.9}{
\begin{aligned}
\tilde{\alpha}_{0,0}^5=\frac{5/12}{(\epsilon+\tilde{I}_{0,0}^5)^t},\quad
\tilde{\alpha}_{0,1}^5=\frac{7/12}{(\epsilon+\tilde{I}_{0,1}^5)^t},\quad
\tilde{\alpha}_{1,1}^5=\frac{7/12}{(\epsilon+\tilde{I}_{1,1}^5)^t},\quad
\tilde{\alpha}_{1,2}^5=\frac{5/12}{(\epsilon+\tilde{I}_{1,2}^5)^t},\quad
\tilde{\alpha}_{0,0}^6=\frac{1/2}{(\epsilon+\tilde{I}_{0,0}^6)^t},\quad
\tilde{\alpha}_{0,1}^6=\frac{1/2}{(\epsilon+\tilde{I}_{0,1}^6)^t}.
\end{aligned}
}
\end{equation}
Just as we did before, the adapted optimal weights are obtained replacing the fixed weights in (\ref{nl_op_w2}) by the nonlinear weights in (\ref{pesos4}),
\begin{eqnarray}\label{pesos4_3}
{\bf \tilde{C}^{4}}=(\tilde{C}_0^4,\tilde{C}_1^4,\tilde{C}_2^4,\tilde{C}_3^4)=\tilde{\omega}_{0,0}^6\left(\tilde{\omega}_{0,0}^5{\bf C_0^5}+\tilde{\omega}_{0,1}^5{\bf C_1^5}\right)+\tilde{\omega}_{0,1}^6\left(\tilde{\omega}_{1,1}^5{\bf C_1^5}+\tilde{\omega}_{1,2}^5{\bf C_2^5}\right).
\end{eqnarray}
The previous expression provides the nonlinear optimal weights $\tilde{C}_k^r$ that are used to replace the optimal weights $\bar{C}_{k}^r$ in (\ref{pesos}) of the classical WENO algorithm. The smoothness indicators that appear in (\ref{pesos4_3}) will be defined in Section \ref{smoothnessindicators}. 
Analogously to the case $r=3$, we apply the classical WENO algorithm with the new nonlinear weights:
\begin{equation*}
\tilde{\mathcal{I}}(x_{i-\frac{1}{2}};f)=\sum_{k=0}^{3}
\tilde{\omega}_{k}^4 p_{k}^4(x_{i-\frac{1}{2}}).
\end{equation*}
with
\begin{equation}\label{pesos41finales}
\begin{aligned}
\tilde{\omega}^4_{k}=\frac{\tilde{\alpha}_{k}^4}{\sum_{j=0}^3 \tilde{\alpha}_j},\quad \text{with} \quad \tilde{\alpha}_{k}^4=\frac{\tilde{C}^4_{k}}{(\epsilon+\tilde{I}_{k}^4)^t}, \quad k=0,1,2,3,
\end{aligned}
\end{equation}
being $\tilde{I}_{k}^4$, $k=0,1,2,3$, the smoothness indicators defined in \cite{smooth,WENO_nuevo}.

We can generalize this process for any $r$. The difficulty lies in constructing smoothness indicators for each level that are able \emph{to watch} the discontinuities correctly. In the next section, we present the generalization of the algorithm for order $2r$ and explicit expressions for all the weights.

\section{General explicit expressions for new WENO-$2r$ weights}\label{newweno}
Following the strategy described in previous section, we can construct a WENO-$2r$ algorithm for any $r$ with optimal accuracy.
In order to extend the results to any $r$ we will use the following lemma.
\begin{lemma}\label{lemaauxiliar1}
Let be $r\leq l \leq 2r-2$ and $0\leq k\leq (2r-2)-l$, if we denote as $C^l_{k,k}$ and $C^{l}_{k,k+1}$ the values which satisfy:
\begin{equation}
p^{l+1}_k(x_{i-1/2})=C^{l}_{k,k}p_k^{l}(x_{i-1/2})+C^{l}_{k,k+1}p_{k+1}^{l}(x_{i-1/2}),
\end{equation}
then
\begin{equation}\label{pesostodos}
C^l_{k,k}=\frac{2(l-r+k+1)+1}{2(l+1)}, \quad C^l_{k,k+1}=1-C^l_{k,k}=\frac{2(r-k)-1}{2(l+1)}.
\end{equation}
\end{lemma}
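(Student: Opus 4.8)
The plan is to recognize $C^l_{k,k}$ and $C^l_{k,k+1}$ as the weights produced by the Aitken--Neville recursion for Lagrange interpolation, and then to evaluate that recursion at the half-integer node $x_{i-1/2}=x_i-h/2$.

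First I would pin down the stencil of each building polynomial. Reading off the base case from the constructions of Section~\ref{resultadosprevios}, the degree-$l$ interpolant $p^l_k$ is built on the $l+1$ consecutive nodes $\{x_{i-r+k},x_{i-r+k+1},\dots,x_{i-r+k+l}\}$; in particular its left endpoint is $x_{i-r+k}$ and its right endpoint is $x_{i-r+k+l}$. One checks immediately that the stencil of $p^l_{k+1}$ is that of $p^l_k$ shifted one node to the right, so that the union of the two stencils is exactly the $(l+2)$-node stencil of $p^{l+1}_k$. This is what makes the defining relation of the lemma meaningful, and it is the only place where the precise indexing enters.

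Next I would write down the Aitken--Neville identity relating the interpolant on $l+2$ nodes to the two interpolants on the overlapping $(l+1)$-node sub-stencils. With $x_j=x_{i-r+k}$ the common left endpoint and $x_{j+l+1}=x_{i-r+k+l+1}$ the common right endpoint, this identity reads, as a polynomial identity in $x$,
\begin{equation*}
p^{l+1}_k(x)=\frac{x_{j+l+1}-x}{x_{j+l+1}-x_j}\,p^l_k(x)+\frac{x-x_j}{x_{j+l+1}-x_j}\,p^l_{k+1}(x),
\end{equation*}
which I would justify in the standard way: both sides are polynomials of degree at most $l+1$; they agree at the $l$ interior nodes shared by the two sub-stencils, where both $p^l_k$ and $p^l_{k+1}$ reproduce the interpolated data and the two coefficients sum to $1$; and they agree at the two extreme nodes $x_j$ and $x_{j+l+1}$ because one of the two linear factors vanishes there. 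Hence the two sides coincide at $l+2$ points and are therefore identical.

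Finally I would specialize $x=x_{i-1/2}=x_i-\tfrac{h}{2}$ and read off the coefficients. Using $x_a-x_b=(a-b)h$ on the uniform grid one computes the three relevant lengths,
\begin{equation*}
x_{j+l+1}-x_j=(l+1)h,\qquad x_{j+l+1}-x_{i-1/2}=\Bigl(l+k-r+\tfrac32\Bigr)h,\qquad x_{i-1/2}-x_j=\Bigl(r-k-\tfrac12\Bigr)h,
\end{equation*}
so that the coefficient of $p^l_k(x_{i-1/2})$ is $C^l_{k,k}=\dfrac{l+k-r+3/2}{l+1}=\dfrac{2(l-r+k+1)+1}{2(l+1)}$ and the coefficient of $p^l_{k+1}(x_{i-1/2})$ is $C^l_{k,k+1}=\dfrac{r-k-1/2}{l+1}=\dfrac{2(r-k)-1}{2(l+1)}$, exactly the asserted formulas; the relation $C^l_{k,k}+C^l_{k,k+1}=1$ also drops out since the two numerators sum to $l+1$. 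The only genuine obstacle is the correct bookkeeping of the node indices and of the half-integer evaluation point, and it is worth sanity-checking the outcome against the explicit values in (\ref{w_opt_5_1})--(\ref{pol7_8}); beyond that the argument rests solely on the uniqueness of the Lagrange interpolant as encoded by Aitken--Neville.
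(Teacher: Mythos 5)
Your proposal is correct and follows essentially the same route as the paper: identify the three stencils, apply the Aitken--Neville recursion relating $p^{l+1}_k$ to $p^l_k$ and $p^l_{k+1}$, and evaluate at $x_{i-1/2}$ on the uniform grid to extract the coefficients. The only difference is that you supply a self-contained justification of the Aitken identity (degree count plus agreement at $l+2$ nodes), whereas the paper simply cites Aitken's interpolation process; the computations and resulting formulas coincide.
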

\begin{proof}
Let be $r\leq l \leq 2r-2$ and $0\leq k\leq (2r-2)-l$, the stencils used to obtain the interpolators $p_k^{l+1}$, $p_k^{l}$ and $p_{k+1}^{l}$ are
$\{x_{i-r+k},\dots,x_{i-r+k+l+1}\}$, $\{x_{i-r+k},\dots,x_{i-r+k+l}\}$ and $\{x_{i-r+k+1},\dots,x_{i-r+k+l+1}\}$ respectively, then using Aitken's interpolation process \cite{Aitken}, from $x_i=a+ih$, we get:
\begin{equation*}
\begin{split}
p^{l+1}_k(x_{i-\frac12})&= \frac{x_{i-r+k+l+1}-x_{i-1/2}}{x_{i-r+k+l+1}-x_{i-r+k}}p_k^{l}(x_{i-1/2})-\frac{x_{i-r+k}-x_{i-1/2}}{x_{i-r+k+l+1}-x_{i-r+k}}p_{k+1}^{l}(x_{i-1/2})\\
&= \frac{a+(i-r+k+l+1)h-(a+(i-1/2)h)}{a+(i-r+k+l+1)h-(a+(i-r+k)h)}p_k^{l}(x_{i-\frac12})-\frac{a+(i-r+k)h-(a+(i-1/2)h)}{a+(i-r+k+l+1)h-(a+(i-r+k)h)}p_{k+1}^{l}(x_{i-\frac12})\\
&= \frac{2(l-r+k+1)+1}{2(l+1)}p_k^{l}(x_{i-\frac12})+\frac{2(r-k)-1}{2(l+1)}p_{k+1}^{l}(x_{i-\frac12})\\
&= C^{l}_{k,k}p_k^{l}(x_{i-1/2})+C^{l}_{k,k+1}p_{k+1}^{l}(x_{i-1/2}).\\
\end{split}
\end{equation*}
\end{proof}

We make the same construction for any $r$, we start writing the polynomials of degree $2r-1$ as combination of polynomials of degree $2r-2$,
 $$p^{2r-1}_0(x_{i-1/2})=C^{2r-2}_{0,0}p_0^{2r-2}(x_{i-1/2})+C^{2r-2}_{0,1}p_{1}^{2r-2}(x_{i-1/2})$$
 and repeat the process for degree $2r-2$,
\begin{equation*}
\begin{split}
&p^{2r-2}_0(x_{i-1/2})=C^{2r-3}_{0,0}p_0^{2r-3}(x_{i-1/2})+C^{2r-3}_{0,1}p_{1}^{2r-3}(x_{i-1/2}),\\
&p^{2r-2}_1(x_{i-1/2})=C^{2r-3}_{1,1}p_1^{2r-3}(x_{i-1/2})+C^{2r-3}_{1,2}p_{2}^{2r-3}(x_{i-1/2}),
\end{split}
\end{equation*}
for degree $2r-3$,
\begin{equation*}
\begin{split}
&p^{2r-3}_0(x_{i-1/2})=C^{2r-4}_{0,0}p_0^{2r-4}(x_{i-1/2})+C^{2r-4}_{0,1}p_{1}^{2r-4}(x_{i-1/2}),\\
&p^{2r-3}_1(x_{i-1/2})=C^{2r-4}_{1,1}p_1^{2r-4}(x_{i-1/2})+C^{2r-4}_{1,2}p_{2}^{2r-4}(x_{i-1/2}),\\
&p^{2r-3}_2(x_{i-1/2})=C^{2r-4}_{2,2}p_2^{2r-4}(x_{i-1/2})+C^{2r-4}_{2,3}p_{3}^{2r-4}(x_{i-1/2}),
\end{split}
\end{equation*}
and so on, until we reach the polynomials of degree $r+1$
 $$p^{r+1}_l(x_{i-1/2})=C^{r}_{l,l}p_l^{r}(x_{i-1/2})+C^{r}_{l,l+1}p_{l+1}^{r}(x_{i-1/2}), \quad l=0,\hdots,r-2.$$
 Thus, if we combine these equations we obtain,
\begin{equation}\label{equationimpo}
\begin{split}
p^{2r-1}_0(x_{i-\frac 12})&=\sum_{j_0=0}^1C^{2r-2}_{0,j_0}p_{j_0}^{2r-2}(x_{i-1/2}) \\
&=\sum_{j_0=0}^1C^{2r-2}_{0,j_0}\left(\sum_{j_1=j_0}^{j_0+1} C^{2r-3}_{j_0,j_1} p_{j_1}^{2r-3}(x_{i-\frac12})\right)\\
&=\sum_{j_0=0}^1C^{2r-2}_{0,j_0}\left(\sum_{j_1=j_0}^{j_0+1} C^{2r-3}_{j_0,j_1} \left(\sum_{j_2=j_1}^{j_1+1} C^{2r-4}_{j_1,j_2}p_{j_2}^{2r-4}(x_{i-\frac12})\right)\right)\\
&=\sum_{j_0=0}^1 C^{2r-2}_{0,j_0}\left(\sum_{j_1=j_0}^{j_0+1}C_{j_0,j_1}^{2r-3}\left(\sum_{j_2=j_1}^{j_1+1}C_{j_1,j_2}^{2r-4}
\left(\dots\left(\sum_{j_{r-2}=j_{r-3}}^{j_{r-3}+1} C^{r+1}_{j_{r-3},j_{r-2}}\left(\sum_{j_{r-1}=j_{r-2}}^{j_{r-2}+1} C^{r}_{j_{r-2},j_{r-1}}p^r_{j_{r-1}}(x_{i-\frac12})\right)\right)\dots\right)\right)\right)
\end{split}
\end{equation}
In Figure \ref{figuraesquema} we show this process for any $r$. We can observe that the diagram shows a tree structure where from each node $C^{l}_{k,k_1}$, with $k_1=k$ or $k_1=k+1$,
it is obtained two subnodes of the form $C^{l-1}_{k_1,k_2}$ with $k_2=k_1$ and $k_2=k_1+1$, which allows an easy construction. The explicit values, $C^{l}_{k,k_1}$ , are calculated using Eq. \eqref{pesostodos} of Lemma \ref{lemaauxiliar1}.
\begin{figure}
\begin{center}
\begin{tikzpicture}
\draw [-] (-0.5,16) -- (0.5,18);
\draw [-] (-0.5,16) -- (0.5,14);
\draw [-] (-0.5,8) -- (0.5,10);
\draw [-] (-0.5,8) -- (0.5,6);

\draw [-] (1.5,18) -- (2.5,19);
\draw [-] (1.5,18) -- (2.5,17);
\draw [-] (1.5,14) -- (2.5,15);
\draw [-] (1.5,14) -- (2.5,13);
\draw [-] (1.5,10) -- (2.5,11);
\draw [-] (1.5,10) -- (2.5,9);
\draw [-] (1.5,6) -- (2.5,7);
\draw [-] (1.5,6) -- (2.5,5);

\draw [dashed] (3.5,19) -- ( 4.5,19.5);
\draw [dashed] (3.5,19) -- ( 4.5,18.5);
\draw [dashed] (3.5,17) -- ( 4.5,17.5);
\draw [dashed] (3.5,17) -- ( 4.5,16.5);
\draw [dashed] (3.5,15) -- ( 4.5,15.5);
\draw [dashed] (3.5,15) --  ( 4.5,14.5);
\draw [dashed] (3.5,13) --  ( 4.5,13.5);
\draw [dashed] (3.5,13) --  ( 4.5,12.5);
\draw [dashed] (3.5,11) -- ( 4.5,11.5);
\draw [dashed] (3.5,11) -- ( 4.5,10.5);
\draw [dashed] (3.5,9) -- ( 4.5,9.5);
\draw [dashed] (3.5,9) -- ( 4.5,8.5);
\draw [dashed] (3.5,7) -- ( 4.5,7.5);
\draw [dashed] (3.5,7) --  ( 4.5,6.5);
\draw [dashed] (3.5,5) --  ( 4.5,5.5);
\draw [dashed] (3.5,5) --  ( 4.5,4.5);

\draw [-] ( 6.5,21) -- ( 7.4,21.5);
\draw [-] ( 6.5,21) -- ( 7.4,20.5);
\draw [-] ( 6.5,19) -- ( 7.4,19.5);
\draw [-] ( 6.5,19) -- ( 7.4,18.5);
\draw [-] ( 6.5,17) -- ( 7.4,17.5);
\draw [-] ( 6.5,17) -- ( 7.4,16.5);
\draw [-] ( 6.5,15) -- ( 7.4,15.5);
\draw [-] ( 6.5,15) -- ( 7.4,14.5);
\draw [-] ( 6.5,13) -- ( 7.4,13.5);
\draw [-] ( 6.5,13) -- ( 7.4,12.5);
\draw [-] ( 6.5,11) -- ( 7.4,11.5);
\draw [-] ( 6.5,11) -- ( 7.4,10.5);
\draw [-] ( 6.5,9)  -- ( 7.4,9.5) ;
\draw [-] ( 6.5,9)  -- ( 7.4,8.5) ;
\draw [-] ( 6.7,5)  -- ( 6.9,5.5)   ;
\draw [-] ( 6.7,5)  -- ( 6.9,4.5) ;
\draw [-] ( 6.7,3)  -- ( 6.9,3.5);
\draw [-] ( 6.7,3)  -- ( 6.9,2.5);

\path node at ( -1,16) {$C^{2r-2}_{0,0}$}
node at ( -1,8) {$C^{2r-2}_{0,1}$}

node at ( 1,18) {$C^{2r-3}_{0,0}$}
node at ( 1,14) {$C^{2r-3}_{0,1}$}
node at ( 1,10) {$C^{2r-3}_{1,1}$}
node at ( 1,6) {$C^{2r-3}_{1,2}$}

node at ( 3,19) {$C^{2r-4}_{0,0}$}
node at ( 3,17) {$C^{2r-4}_{0,1}$}
node at ( 3,15) {$C^{2r-4}_{1,1}$}
node at ( 3,13) {$C^{2r-4}_{1,2}$}
node at ( 3,11) {$C^{2r-4}_{1,1}$}
node at ( 3,9) {$C^{2r-4}_{1,2}$}
node at ( 3,7) {$C^{2r-4}_{2,2}$}
node at ( 3,5) {$C^{2r-4}_{2,3}$}

node at ( 5,19.5) {$\hdots$}
node at ( 5,18.5) {$\hdots$}
node at ( 5,17.5) {$\hdots$}
node at ( 5,16.5) {$\hdots$}
node at ( 5,15.5) {$\hdots$}
node at ( 5,14.5) {$\hdots$}
node at ( 5,13.5) {$\hdots$}
node at ( 5,12.5) {$\hdots$}
node at ( 5,11.5) {$\hdots$}
node at ( 5,10.5) {$\hdots$}
node at ( 5,9.5) {$\hdots$}
node at ( 5,8.5) {$\hdots$}
node at ( 5,7.5) {$\hdots$}
node at ( 5,6.5) {$\hdots$}
node at ( 5,5.5) {$\hdots$}
node at ( 5,4.5) {$\hdots$}

node at ( 6,21) {$C^{r+1}_{0,0}$}
node at ( 6,19) {$C^{r+1}_{1,1}$}
node at ( 6,17) {$C^{r+1}_{1,2}$}
node at ( 6,15) {$C^{r+1}_{1,1}$}
node at ( 6,13) {$C^{r+1}_{1,2}$}
node at ( 6,11) {$C^{r+1}_{1,2}$}
node at ( 6,9) {$C^{r+1}_{2,3}$}
node at ( 6,7) {$\vdots$}
node at ( 6,5) {$C^{r+1}_{r-3,r-3}$}
node at ( 6,3) {$C^{r+1}_{r-3,r-2}$}

node at ( 8,21.5) {$C^{r}_{0,0}p^r_0$}
node at ( 8,20.5) {$C^{r}_{0,1}p^r_1$}
node at ( 8,19.5) {$C^{r}_{1,1}p^r_1$}
node at ( 8,18.5) {$C^{r}_{1,2}p^r_2$}
node at ( 8,17.5) {$C^{r}_{2,2}p^r_2$}
node at ( 8,16.5) {$C^{r}_{2,3}p^r_3$}
node at ( 8,15.5) {$C^{r}_{1,1}p^r_1$}
node at ( 8,14.5) {$C^{r}_{1,2}p^r_2$}
node at ( 8,13.5) {$C^{r}_{2,2}p^r_2$}
node at ( 8,12.5) {$C^{r}_{2,3}p^r_3$}
node at ( 8,11.5) {$C^{r}_{2,2}p^r_2$}
node at ( 8,10.5) {$C^{r}_{2,3}p^r_3$}
node at ( 8,9.5) {$C^{r}_{3,3}p^r_3$}
node at ( 8,8.5) {$C^{r}_{3,4}p^r_4$}
node at ( 8,7)   {$\vdots$}
node at ( 8,5.5) {$C^{r}_{r-3,r-3}p^r_{r-3}$}
node at ( 8,4.5) {$C^{r}_{r-3,r-2}p^r_{r-2}$}
node at ( 8,3.5) {$C^{r}_{r-2,r-2}p^r_{r-2}$}
node at ( 8,2.5) {$C^{r}_{r-2,r-1}p^r_{r-1}$};
\end{tikzpicture}
\end{center}
\caption{Diagram showing the structure of the optimal weights needed to obtain optimal order of accuracy.}
\label{figuraesquema}
\end{figure}
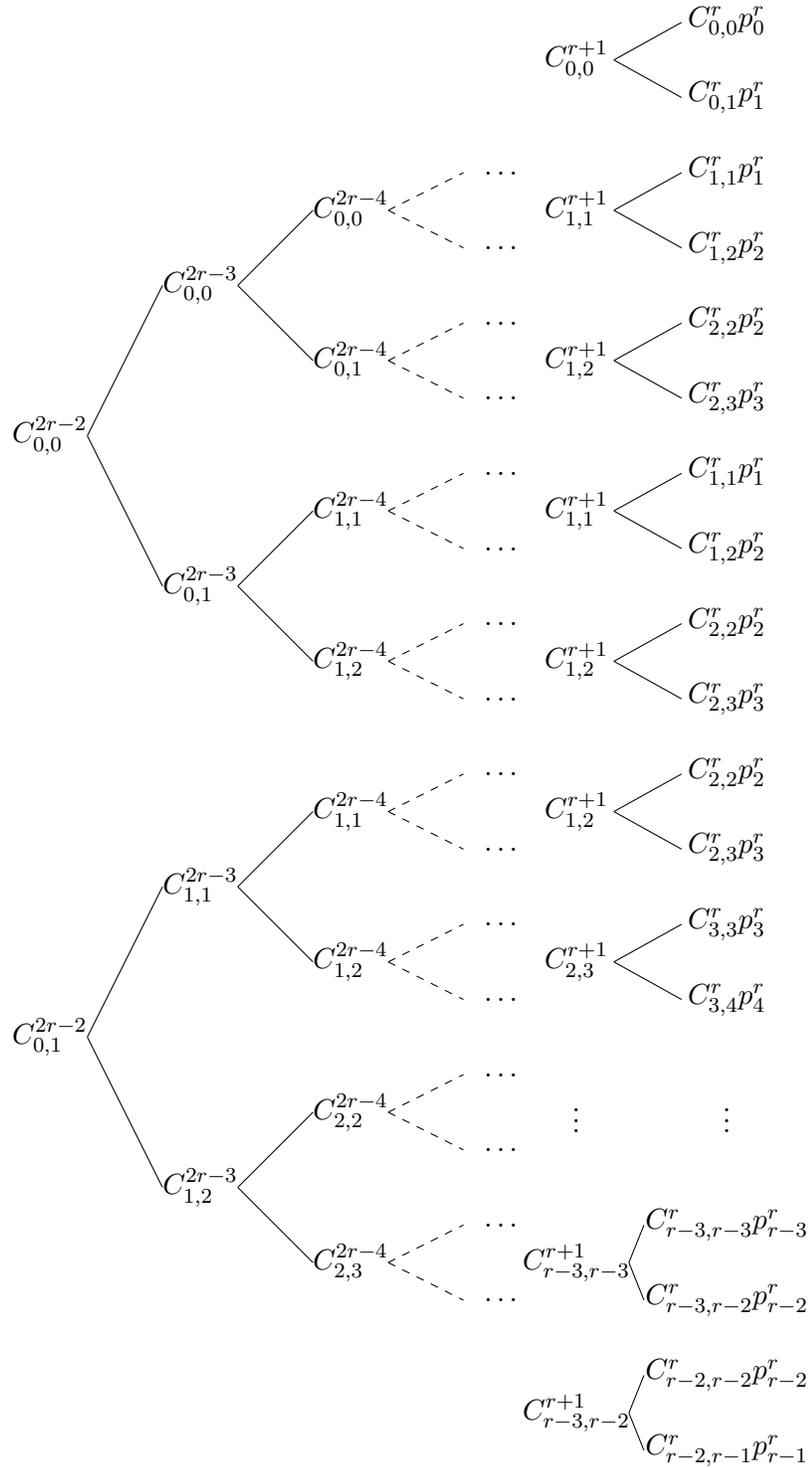
Therefore, it is easy to prove that the weights and the vector $\bf{C_k^{r+1}}$ with $0\leq k\leq r-2$, are
\begin{equation}\label{nl_op_w_r}
\begin{split}
&{\bf C_{0}^{r+1}}=\left(C_{0,0}^r, C_{0,1}^r, 0,0, \hdots,0\right)=\left(\frac{3}{2(r+1)}, \frac{2r-1}{2(r+1)},0, 0, \dots,0\right),\\
&{\bf C_{1}^{r+1}}=\left(0,C_{1,1}^r, C_{1,2}^r,0, \hdots, 0\right)=\left(0, \frac{5}{2(r+1)},\frac{2r-3}{2(r+1)},0,\dots,0\right),\\
 &\qquad \qquad\qquad\vdots\\
&{\bf C_{r-3}^{r+1}}=\left(0,\hdots,0,C_{r-3,r-3}^r, C_{r-3,r-2}^r, 0\right)=\left(0,\hdots,0,\frac{2r-3}{2(r+1)},\frac{5}{2(r+1)}, 0\right),\\
&{\bf C_{r-2}^{r+1}}=\left(0,\hdots,0, 0,C_{r-2,r-2}^r, C_{r-2,r-1}^r\right)=\left(0,\hdots,0, 0, \frac{2r-1}{2(r+1)},\frac{3}{2(r+1)}\right).\\
\end{split}
\end{equation}

Hence, by construction, these weights satisfy
\begin{equation}\label{eqsuperimp0}
\begin{split}
\sum_{j_0=0}^1 C^{2r-2}_{0,j_0}\left(\sum_{j_1=j_0}^{j_0+1}C_{j_0,j_1}^{2r-3}\left(\sum_{j_2=j_1}^{j_1+1}C_{j_1,j_2}^{2r-4}
\left(\dots\left(\sum_{j_{r-2}=j_{r-3}}^{j_{r-3}+1} C^{r+1}_{j_{r-3},j_{r-2}}{\bf C_{j_{r-2}}^{r+1}} \right)\dots\right)\right)\right)=(\bar{C}_0^r,\bar{C}_1^r,\hdots,\bar{C}_{r-2}^r,\bar{C}_{r-1}^r)={\bf \bar{C}^{r}}.
\end{split}
\end{equation}
In order to design the nonlinear weights, we substitute in Eq. \eqref{eqsuperimp0} the values $C_{k,k_1}^{l}$, for $l=r+1,\dots,2r-2$ and $0 \leq k\leq (2r-2)-l$, $k_1=k,k+1$ by
\begin{equation}\label{pesosr}
\begin{split}
&\tilde{\omega}^l_{k,k_1}=\frac{\tilde{\alpha}_{k,k_1}^l}{\tilde{\alpha}_{k,k}^l+\tilde{\alpha}_{k,k+1}^l},\quad k_1=k,\,\, k+1,\\
&\tilde{\alpha}_{k,k_1}^l=\frac{C_{k,k_1}^l}{(\epsilon+\tilde I^l_{k,k_1})^t}, \quad k_1=k,\,\,k+1,\\
\end{split}
\end{equation}
where $\tilde I^l_{k,k_1}$ are the smoothness indicators introduced in \cite{WENO_nuevo} and that will be analyzed in detail in Section \ref{generalsmoothness}. Thus, the nonlinear weights are defined as
\begin{equation}\label{eqsuperimp}
\begin{split}
{\bf \tilde{C}^{r}}=(\tilde{C}_0^r,\tilde{C}_1^r,\hdots,\tilde{C}_{r-2}^r,\tilde{C}_{r-1}^r)=\sum_{j_0=0}^1 \tilde{\omega}^{2r-2}_{0,j_0}\left(\sum_{j_1=j_0}^{j_0+1}\tilde{\omega}_{j_0,j_1}^{2r-3}\left(\sum_{j_2=j_1}^{j_1+1}\tilde{\omega}_{j_1,j_2}^{2r-4}
\left(\dots\left(\sum_{j_{r-2}=j_{r-3}}^{j_{r-3}+1} \tilde{\omega}^{r+1}_{j_{r-3},j_{r-2}}{\bf C_{j_{r-2}}^{r+1}} \right)\dots\right)\right)\right).
\end{split}
\end{equation}
Finally, for these weights, we apply the WENO algorithm described in Section \ref{introduction} using the nonlinear optimal weights in (\ref{eqsuperimp}), i.e., we calculate
\begin{equation*}
\tilde{\mathcal{I}}(x_{i-\frac{1}{2}};f)=\sum_{k=0}^{r}
\tilde{\omega}_{k}^r p_{k}^r(x_{i-\frac{1}{2}}).
\end{equation*}
with
\begin{equation}\label{pesosr1finales}
\begin{aligned}
\tilde{\omega}^r_{k}=\frac{\tilde{\alpha}_{k}^r}{\sum_{j=0}^{r-1} \tilde{\alpha}_j},\quad \text{with} \quad \tilde{\alpha}_{k}^r=\frac{\tilde{C}^r_{k}}{(\epsilon+\tilde{I}_{k}^r)^t}, \quad k=0,\hdots,r-1.
\end{aligned}
\end{equation}
being again the $\tilde{I}_{k}^r$, $k=0,\hdots,r$, the smoothness indicators defined in \cite{smooth,WENO_nuevo}.

In order to clarify this construction, we present the case $r=5$ because $r=3$ and $r=4$ have already been displayed in previous section.

\subsection{The case $r=5$}
In order to construct the nonlinear weights for $r=5$, firstly, we calculate the vectors $\bf{C_k^6}$, with $k=0,1,2,3$, thus
\begin{equation}\label{nl_op_w_5}
\begin{split}
&{\bf C_{0}^6}=\left(C_{0,0}^5, C_{0,1}^5, 0, 0,0\right)=\left(\frac{1}{4}, \frac{3}{4}, 0, 0,0\right),\quad {\bf C_{1}^6}=\left(0,C_{1,1}^5, C_{1,2}^5, 0, 0\right)=\left(0, \frac{5}{12},\frac{7}{12}, 0,0\right),\\
 &{\bf C_{2}^6}=\left(0,0,C_{2,2}^5, C_{2,3}^5, 0\right)=\left(0, 0, \frac{7}{12},\frac{5}{12},0\right),\quad {\bf C_{3}^6}=\left(0, 0,0,C_{3,3}^5, C_{3,4}^5\right)=\left(0, 0, 0,\frac{3}{4},\frac{1}{4}\right).
\end{split}
\end{equation}

Secondly, we obtain the expressions of the vectors:
\begin{equation}\label{pesos5}
\begin{aligned}
\tilde{\omega}^6_{0,0}&=\frac{\tilde{\alpha}_{0,0}^6}{\tilde{\alpha}_{0,0}^6+\tilde{\alpha}_{0,1}^6},\quad
\tilde{\omega}^6_{0,1}=\frac{\tilde{\alpha}_{0,1}^6}{\tilde{\alpha}_{0,0}^6+\tilde{\alpha}_{0,1}^6},\quad
\tilde{\omega}^6_{1,1}=\frac{\tilde{\alpha}_{1,1}^6}{\tilde{\alpha}_{1,1}^6+\tilde{\alpha}_{1,2}^6},\quad
\tilde{\omega}^6_{1,2}=\frac{\tilde{\alpha}_{1,2}^6}{\tilde{\alpha}_{1,1}^6+\tilde{\alpha}_{1,2}^6},\quad
\tilde{\omega}^6_{2,2}=\frac{\tilde{\alpha}_{2,2}^6}{\tilde{\alpha}_{2,2}^6+\tilde{\alpha}_{2,3}^6},\quad
\tilde{\omega}^6_{2,3}=\frac{\tilde{\alpha}_{2,3}^6}{\tilde{\alpha}_{2,2}^6+\tilde{\alpha}_{2,3}^6},\\
\tilde{\omega}^7_{0,0}&=\frac{\tilde{\alpha}_{0,0}^7}{\tilde{\alpha}_{0,0}^7+\tilde{\alpha}_{0,1}^7},\quad
\tilde{\omega}^7_{0,1}=\frac{\tilde{\alpha}_{0,1}^7}{\tilde{\alpha}_{0,0}^7+\tilde{\alpha}_{0,1}^7},\quad
\tilde{\omega}^7_{1,1}=\frac{\tilde{\alpha}_{1,1}^7}{\tilde{\alpha}_{1,1}^7+\tilde{\alpha}_{1,2}^7},\quad
\tilde{\omega}^7_{1,2}=\frac{\tilde{\alpha}_{1,2}^7}{\tilde{\alpha}_{1,1}^7+\tilde{\alpha}_{1,2}^7},\\
\tilde{\omega}^8_{0,0}&=\frac{\tilde{\alpha}_{0,0}^8}{\tilde{\alpha}_{0,0}^8+\tilde{\alpha}_{0,1}^8},\quad
\tilde{\omega}^8_{0,1}=\frac{\tilde{\alpha}_{0,1}^8}{\tilde{\alpha}_{0,0}^8+\tilde{\alpha}_{0,1}^8}.
\end{aligned}
\end{equation}
with,
\begin{equation}\label{pesos5_3}
\begin{aligned}
\tilde{\alpha}_{0,0}^6&=\frac{C^6_{0,0}}{(\epsilon+\tilde{I}_{0,0}^6)^t},\quad
\tilde{\alpha}_{0,1}^6=\frac{C^6_{0,1}}{(\epsilon+\tilde{I}_{0,1}^6)^t},\quad
\tilde{\alpha}_{1,1}^6=\frac{C^6_{1,1}}{(\epsilon+\tilde{I}_{1,1}^6)^t},\quad
\tilde{\alpha}_{1,2}^6=\frac{C^6_{1,2}}{(\epsilon+\tilde{I}_{1,2}^6)^t},\quad
\tilde{\alpha}_{2,2}^6=\frac{C^6_{2,2}}{(\epsilon+\tilde{I}_{2,2}^6)^t},\quad
\tilde{\alpha}_{2,3}^6=\frac{C^6_{2,3}}{(\epsilon+\tilde{I}_{2,3}^6)^t},\\
\tilde{\alpha}_{0,0}^6&=\frac{5/14}{(\epsilon+\tilde{I}_{0,0}^6)^t},\quad
\tilde{\alpha}_{0,1}^6=\frac{9/14} {(\epsilon+\tilde{I}_{0,1}^6)^t},\quad
\tilde{\alpha}_{1,1}^6=\frac{1/2}  {(\epsilon+\tilde{I}_{1,1}^6)^t},\quad
\tilde{\alpha}_{1,2}^6=\frac{1/2}  {(\epsilon+\tilde{I}_{1,2}^6)^t},\quad
\tilde{\alpha}_{0,0}^6=\frac{9/14} {(\epsilon+\tilde{I}_{2,2}^6)^t},\quad
\tilde{\alpha}_{0,1}^6=\frac{5/14} {(\epsilon+\tilde{I}_{2,3}^6)^t},\\
\tilde{\alpha}_{0,0}^7&=\frac{C^7_{0,0}}{(\epsilon+\tilde{I}_{0,0}^7)^t},\quad
\tilde{\alpha}_{0,1}^7=\frac{C^7_{0,1}}{(\epsilon+\tilde{I}_{0,1}^7)^t},\quad
\tilde{\alpha}_{1,1}^7=\frac{C^7_{1,1}}{(\epsilon+\tilde{I}_{1,1}^7)^t},\quad
\tilde{\alpha}_{1,2}^7=\frac{C^7_{1,2}}{(\epsilon+\tilde{I}_{1,2}^7)^t},\\
\tilde{\alpha}_{0,0}^7&=\frac{7/16}{(\epsilon+\tilde{I}_{0,0}^7)^t},\quad
\tilde{\alpha}_{0,1}^7=\frac{9/16}{(\epsilon+\tilde{I}_{0,1}^7)^t},\quad
\tilde{\alpha}_{1,1}^7=\frac{9/16}{(\epsilon+\tilde{I}_{1,1}^7)^t},\quad
\tilde{\alpha}_{1,2}^7=\frac{7/16}{(\epsilon+\tilde{I}_{1,2}^7)^t},\\
\tilde{\alpha}_{0,0}^8&=\frac{C^8_{0,0}}{(\epsilon+\tilde{I}_{0,0}^8)^t},\quad
\tilde{\alpha}_{0,1}^8=\frac{C^8_{0,1}}{(\epsilon+\tilde{I}_{0,1}^8)^t},\quad\\
\tilde{\alpha}_{0,0}^8&=\frac{1/2}{(\epsilon+\tilde{I}_{0,0}^8)^t},\quad
\tilde{\alpha}_{0,1}^8=\frac{1/2}{(\epsilon+\tilde{I}_{0,1}^8)^t}.
\end{aligned}
\end{equation}
where $\tilde{I}^l_{k,k_1}$, with $l=6,7,8$, $k=0,1,2$ and $k_1=k,k+1$ are the smoothness indicators that will be defined in Section \ref{generalsmoothness}. All in all, we get the new nonlinear weights,
\begin{equation}\label{v_pesos5}
\begin{split}
{\bf \tilde{C}^5}=(\tilde{C}_0^5,\tilde{C}_1^5,\tilde{C}_2^5,\tilde{C}_3^5,\tilde{C}_3^5)=&\tilde{\omega}_{0,0}^8\left(\tilde{\omega}_{0,0}^7\left(\tilde{\omega}_{0,0}^6{\bf C_0^6}+\tilde{\omega}_{0,1}^6{\bf C_1^6}\right)+\tilde{\omega}_{0,1}^7\left(\tilde{\omega}_{1,1}^6{\bf C_1^6}+\tilde{\omega}_{1,2}^6{\bf C_2^6}\right)\right)\\
&+\tilde{\omega}_{0,1}^8\left(\tilde{\omega}_{1,1}^7\left(\tilde{\omega}_{1,1}^6{\bf C_1^6}+\tilde{\omega}_{1,2}^6{\bf C_{2,2}^6}\right)+\tilde{\omega}_{1,2}^7\left(\tilde{\omega}_{2,2}^6{\bf C_2^6}+\tilde{\omega}_{2,3}^6{\bf C_3^6}\right)\right).
\end{split}
\end{equation}
Now we can apply the classical WENO method using these weights as optimal weights. In the next section, we will explain and prove the properties of the smoothness indicators used in the algorithm for any value of $r$.

\section{Smoothness indicators of high order}\label{smoothnessindicators}
In this section we present the smoothness indicators that we have chosen and that allow to obtain optimal order of accuracy close to the discontinuities.
We will use those introduced in \cite{smooth}, that work well for detecting kinks and jumps in the function if the data is discretized in the point values (\ref{pv}), we define them as:
\begin{equation}\label{si_nuestro2}
\tilde I_k^r=\sum_{l=2}^{r} h^{2l-1}\int_{x_{i-1}}^{x_{i}}\left(\frac{d^l}{dx^l}p^r_{k}(x)\right)^2 dx,\quad k=0,\hdots,r-1.
\end{equation}
For $r=2,3$, these weights satisfy several properties that allow to obtain the condition shown in Eq. \eqref{omega} (see \cite{smooth}). In this section we extend these results for any $r$.
In \cite{generalizacion} we introduced the proof of the following theorem that we reproduce here for completeness.
\begin{theorem}\label{ord_IS}
Let be $0\leq k\leq r-1$ and $p^r_k$ the interpolator polynomial of $f$ of degree $r\ge 3$ that uses the nodes of the stencil $S^r_k$, then at smooth zones, the smoothness indicator  obtained through (\ref{si_nuestro2}) satisfy,
$$\tilde{I}^r_k=\left(h^2 (p^r_k)''(x_{i-1/2})\right)^2\cdot(1+O(h^2)).$$
\end{theorem}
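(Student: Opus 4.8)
The plan is to isolate the dominant term in the sum defining $\tilde I_k^r$ and to show that every other contribution is a relative $O(h^2)$ perturbation of it. Writing $q(x)=(p^r_k)''(x)$, a polynomial of degree $r-2\ge 1$, I would first record the standard interpolation fact that at a smooth zone the derivatives of the interpolant reproduce those of $f$, namely $(p^r_k)^{(l)}(x_{i-1/2})=f^{(l)}(x_{i-1/2})+O(h^{\,r+1-l})$ for $0\le l\le r$. In particular each $(p^r_k)^{(l)}(x_{i-1/2})$ is $O(1)$, and I would work under the generic hypothesis $f''(x_{i-1/2})\neq 0$, so that the leading quantity $q(x_{i-1/2})$ is bounded away from zero and the claimed relative error statement is meaningful.

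The heart of the argument is the evaluation of a single term $h^{2l-1}\int_{x_{i-1}}^{x_i}\big((p^r_k)^{(l)}(x)\big)^2\,dx$, and the key structural fact I would exploit is that $x_{i-1/2}$ is exactly the midpoint of the integration interval $[x_{i-1},x_i]$. Substituting $x=x_{i-1/2}+s$ with $s\in[-h/2,h/2]$ and Taylor expanding $\big((p^r_k)^{(l)}(x_{i-1/2}+s)\big)^2$ about $s=0$, the term linear in $s$ integrates to zero by symmetry; the surviving corrections come from the even powers of $s$ and are $O(h^3)$ against the $O(h)$ leading contribution. This yields $\int_{x_{i-1}}^{x_i}\big((p^r_k)^{(l)}\big)^2\,dx=h\,\big((p^r_k)^{(l)}(x_{i-1/2})\big)^2(1+O(h^2))$, so the $l$-th term equals $h^{2l}\big((p^r_k)^{(l)}(x_{i-1/2})\big)^2(1+O(h^2))$. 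This symmetry cancellation is the main point of the proof: without it one would only obtain a $(1+O(h))$ factor, which is too weak for the stated conclusion. For the terms with $l\ge 3$ a cruder estimate already suffices, since $\int_{x_{i-1}}^{x_i}\big((p^r_k)^{(l)}\big)^2\,dx\le h\sup_{[x_{i-1},x_i]}\big((p^r_k)^{(l)}\big)^2=O(h)$, whence $h^{2l-1}\int_{x_{i-1}}^{x_i}\big((p^r_k)^{(l)}\big)^2\,dx=O(h^{2l})$.

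With these two estimates in hand the conclusion follows by collecting terms: the $l=2$ term equals $h^4\big((p^r_k)''(x_{i-1/2})\big)^2(1+O(h^2))=\big(h^2(p^r_k)''(x_{i-1/2})\big)^2(1+O(h^2))$, while $\sum_{l=3}^{r}O(h^{2l})=O(h^6)$. Factoring $h^4\big((p^r_k)''(x_{i-1/2})\big)^2$ out of the whole sum and using that this leading factor is $O(1)$ and bounded below, each higher term contributes a relative $O(h^{2l-4})=O(h^2)$ (since $2l-4\ge 2$ for $l\ge 3$), which gives exactly the stated identity. The only delicate bookkeeping is checking that the constants hidden in the $O$-symbols are uniform in $k$ and independent of $h$; this follows from the boundedness of $f$ and its relevant derivatives on the smooth region together with the fact that the sum contains a fixed finite number of terms.
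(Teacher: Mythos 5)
Your proof is correct and follows essentially the same route as the paper's: a Taylor expansion of $\bigl((p^r_k)^{(l)}\bigr)^2$ about the midpoint $x_{i-1/2}$, the symmetry cancellation of the odd term over $[x_{i-1},x_i]$ to upgrade the relative error from $O(h)$ to $O(h^2)$, and the observation that the $l\ge 3$ terms contribute only $O(h^{2l})=O(h^6)$. Your explicit remark that the conclusion requires $(p^r_k)''(x_{i-1/2})$ to be bounded away from zero (so that the $O(h^6)$ tail can be absorbed as a relative $O(h^2)$ of the leading $h^4$ term) is a genericity hypothesis the paper leaves implicit, and flagging it is a point in your favour rather than a defect.
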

\vspace{-0.5cm}
\begin{proof}
Let be $0\leq k\leq r-1$ and $p^r_k$ the interpolator polynomial of $f$ of degree $r\ge 3$ that uses the nodes of the stencil $S^r_k$. It is an $r$ times continuously differentiable function in a neighborhood of the point $x_{i-1/2}=x_i-h/2$. Then, we can write $\left(p_k^{(l)}(x)\right)^2$ using the Taylor expansion of $p_k(x)$ as,
\begin{equation*}
\begin{aligned}
\left((p^r_k)^{(l)}(x)\right)^2&=\left((p^r_k)^{(l)}(x_{i-1/2})+(p^r_k)^{(l+1)}(x_{i-1/2})(x-x_{i-1/2})+O((x-x_{i-1/2})^2)\right)^2\\
&=\left((p^r_k)^{(l)}(x_{i-1/2})\right)^2+\left((p^r_k)^{(l+1)}(x_{i-1/2})(x-x_{i-1/2})\right)^2+2p_k^{(l)}(x_{i-1/2})(p^r_k)^{(l+1)}(x_{i-1/2})(x-x_{i-1/2})\\
&+2(p^r_k)^{(l)}(x_{i-1/2})(p^r_k)^{(l+2)}(x_{i-1/2})(x-x_{i-1/2})^2+O((x-x_{i-1/2})^3).
\end{aligned}
\end{equation*}
Integrating the previous expression between $x_{i-1}$ and $x_i$, replacing in (\ref{si_nuestro2}) and simplifying, we obtain the result, as for $l>2$ all the terms of the summation have a size smaller or equal than $O(h^6)$, $$\tilde I^r_k=\sum_{l=2}^{r} h^{2l-1}\int_{x_{i-1}}^{x_{i}}\left((p^r_k)^{(l)}(x)\right)^2 dx=\sum_{l=2}^{r} \left(h^{l}(p^r_k)^{(l)}(x_{i-1/2})\right)^2\cdot(1+O(h^2))=\left(h^2 (p^r_k)''(x_{i-1/2})\right)^2\cdot(1+O(h^2)).\qedhere$$
\end{proof}

The following proposition is proved in \cite{smooth}.
\begin{proposition}
Let be $0\leq k <r$,  $1\leq t$ and $\tilde{\omega}^r_k$ the nonlinear weights
 defined in Eq. \eqref{pesosr1finales} then:
\begin{equation*}
\begin{split}
&\tilde{\omega}^r_k=O(1), \quad \text{if $f$ is smooth in $S^r_k$,}\\
&\tilde{\omega}^r_k=O(h^{2mt}), \quad \text{if $f$ is not smooth in $S^r_k$.}\\
\end{split}
\end{equation*}
with $m=2$ if the discontinuity is in the function and $m=1$ if the discontinuity is in the first derivative.
\end{proposition}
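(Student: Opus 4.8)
The plan is to exploit that, by (\ref{pesosr1finales}), each weight is a genuine convex-combination coefficient, $\tilde\omega_k^r=\tilde\alpha_k^r/\sum_{j=0}^{r-1}\tilde\alpha_j^r\in[0,1]$; hence the first assertion $\tilde\omega_k^r=O(1)$ is immediate and all the work lies in the discontinuous case. Fix $k$ with $f$ non-smooth on $S_k^r$. Since every $\tilde\alpha_j^r>0$, for any index $j_0$ I can drop all but one term of the denominator and bound
\[
\tilde\omega_k^r\le\frac{\tilde\alpha_k^r}{\tilde\alpha_{j_0}^r}=\frac{\tilde C_k^r}{\tilde C_{j_0}^r}\left(\frac{\epsilon+\tilde I_{j_0}^r}{\epsilon+\tilde I_k^r}\right)^{t}.
\]
I would pick $j_0$ so that $S_{j_0}^r$ is smooth; such an index exists because, by the standing hypothesis, the global stencil contains at most one discontinuity, so at least one of the $r$ sub-stencils $S_0^r,\dots,S_{r-1}^r$ lies entirely on its smooth side. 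The problem thus reduces to comparing the size of $\tilde I_{j_0}^r$ with that of $\tilde I_k^r$.

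For the smooth sub-stencil I would invoke Theorem \ref{ord_IS}, which gives $\tilde I_{j_0}^r=\big(h^2(p_{j_0}^r)''(x_{i-1/2})\big)^2(1+O(h^2))$; since $(p_{j_0}^r)''(x_{i-1/2})=f''(x_{i-1/2})+O(h^{r-1})$ is bounded, this yields the upper bound $\tilde I_{j_0}^r=O(h^4)$. For the non-smooth indicator I only need a lower bound, and the cleanest one comes from the top summand of (\ref{si_nuestro2}). In Newton form the leading derivative of the interpolant is constant, $(p_k^r)^{(r)}\equiv r!\,f[x_{i-r+k},\dots,x_{i+k}]$, and a jump in the $(2-m)$-th derivative of $f$ inside $S_k^r$ (so $m=2$ for a jump in $f$ and $m=1$ for a jump in $f'$) makes this full-stencil divided difference of exact order $h^{(2-m)-r}$. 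Hence the $l=r$ term of (\ref{si_nuestro2}) is of order $h^{2r-1}\cdot h\cdot h^{2((2-m)-r)}=h^{2(2-m)}$, and because all summands are nonnegative,
\[
\tilde I_k^r\ \ge\ c\,h^{2(2-m)}\qquad(c>0).
\]

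Finally I would control the weight ratio. The adapted optimal weights $\tilde C_j^r$ of (\ref{eqsuperimp}) are entries of convex combinations of the fixed vectors ${\bf C}_j^{r+1}$, hence uniformly bounded; moreover $j_0$ can be taken on the extreme smooth side, where $\tilde C_{j_0}^r$ stays bounded below by a positive constant (it tends to the classical weight $\bar C_{j_0}^r>0$ as $h\to0$), so $\tilde C_k^r/\tilde C_{j_0}^r=O(1)$. Combining with the two indicator estimates,
\[
\tilde\omega_k^r\ \le\ C\left(\frac{\epsilon+C'h^4}{c\,h^{2(2-m)}}\right)^{t}\ =\ O\!\left(h^{2mt}\right),
\]
where the last step uses that $\epsilon$ is chosen of the size of the smooth indicators, $\epsilon=O(h^4)$ as indicated after (\ref{pesos}); since $4\ge 2(2-m)$ this keeps the regularization harmless to the asymptotic order. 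This is exactly the asserted estimate.

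The step I expect to be the main obstacle is the divided-difference estimate pinning down the exact order $h^{(2-m)-r}$ of the full-stencil divided difference across the singularity, i.e. showing it does not degenerate once the smooth part of $f$ is subtracted. Anchoring the lower bound on the top-order term $l=r$ is what sidesteps the accidental \emph{shadow} cancellations that afflict the lower-order terms, but the argument still requires the standard expansion of divided differences on a stencil straddling a discontinuity. The auxiliary facts---existence of a smooth $j_0$, the uniform lower bound on $\tilde C_{j_0}^r$, and the domination of $\epsilon$---are routine but must be recorded to make the chain of inequalities rigorous.
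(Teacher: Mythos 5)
First, note that the paper does not actually prove this proposition: it is stated with the remark that it ``is proved in \cite{smooth}'', so there is no internal proof to compare against. Your argument is therefore a genuine addition, and its core mechanism is sound and is in fact the same one the paper deploys later for the pairwise weights in Proposition \ref{prop1}: bound $\tilde\omega_k^r\le\tilde\alpha_k^r/\tilde\alpha_{j_0}^r$ against a smooth reference index, and control the ratio $\bigl((\epsilon+\tilde I_{j_0}^r)/(\epsilon+\tilde I_k^r)\bigr)^t$ by $\tilde I_{j_0}^r=O(h^4)$ (Theorem \ref{ord_IS}) versus $\tilde I_k^r\ge c\,h^{2(2-m)}$, the latter extracted cleanly from the nonnegative $l=r$ summand of \eqref{si_nuestro2} via the full-stencil divided difference. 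The exponent bookkeeping $\bigl(h^4/h^{2(2-m)}\bigr)^t=h^{2mt}$ is exactly what the claimed rate requires; incidentally, your order $h^{2(2-m)}=h^2$ for a kink is the one consistent with the conclusion, whereas the $O(h)$ recorded in the statement of Proposition \ref{prop1} would only give $h^{3t}$.

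Two points need tightening. (i) Your existence claim for a smooth $j_0$ is false when the discontinuity lies in the central interval $(x_{i-1},x_i)$: every sub-stencil $S_k^r=\{x_{i-r+k},\dots,x_{i+k}\}$ contains both $x_{i-1}$ and $x_i$, so none is smooth --- and indeed the proposition itself cannot hold there, since the $\tilde\omega_k^r$ sum to one and cannot all be $O(h^{2mt})$. This case must be excluded as an implicit hypothesis (the paper's accuracy analysis treats it separately and claims only $\tilde\omega^l_{k,k_1}=O(1)$). (ii) Your lower bound on $\tilde C_{j_0}^r$ is justified by saying it ``tends to the classical weight $\bar C_{j_0}^r$'', but that is only true when the whole $2r$-point stencil is smooth; when it is crossed by a discontinuity at $[x_{i+l_0-1},x_{i+l_0}]$, Lemma \ref{lema_t} shows $\tilde C_{j}^r\to\hat C_j^r$ with $\sum_{j<l_0}\hat C_j^r=1$, so the correct fix is to choose $j_0<l_0$ with $\hat C_{j_0}^r\ge 1/l_0$, which exists by nonnegativity; such a $j_0$ is automatically smooth. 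With these two repairs (plus the usual convention that $\epsilon$ is taken of the order of the smooth indicators rather than the fixed $10^{-16}$ used numerically, a convention the paper itself adopts), your proof is complete and correct.
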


We give the explicit form of $\tilde{I}^r_k$ with $k=0,\hdots,r-1$ for $r=3,4$. We represent the second order undivided differences by $\delta_{i}^2={ f_{i}}-2\,{ f_{i+1}}+{ f_{i+2}}$, $\delta_{i}^3=\delta_{i+1}^2-\delta_{i}^2$ and $\delta_{i}^4=\delta_{i+1}^3-\delta_{i}^3$. Taking into account the previous notation for the differences, we can write the smoothness indicators in (\ref{si_nuestro2}) for $r=3$ as,
\begin{equation}\label{IS4}
\begin{aligned}
\tilde I_{0}^3&=\frac{10}{3}(\delta^3_{i-3})^2+3 \delta^3_{i-3} \delta^2_{i-3}+(\delta^2_{i-3})^2,\\
\tilde I_{1}^3&=\frac{4}{3}(\delta^3_{i-2})^2+ \delta^3_{i-2} \delta^2_{i-2}+(\delta^2_{i-2})^2,\\
\tilde I_2^3&=\frac{4}{3}(\delta^3_{i-1})^2-\delta^3_{i-1} \delta^2_{i-1}+(\delta^2_{i-1})^2.
\end{aligned}
\end{equation}
For $r=4$ they can be written as:
\begin{equation}\label{IS5}
\begin{aligned}
\tilde I_{0}^4&={\frac {27}{2}}\,\delta^4_{i-4}\,\delta^3_{i-4}+\frac{11}{3}\,\delta^4_{i-4}\,\delta^2_{i-4}+5\,\delta^3_{i-4}\,\delta^2_{i-4}+{\frac {2107}{240}}\,(\delta^4_{i-4})^2+{\frac {22}{3}}\,(\delta^3_{i-4})^2+(\delta^2_{i-4})^2,\\
\tilde I_{1}^4&={\frac {547}{240}}(\delta^4_{i-3})^2+\frac{10}{3}\,(\delta^3_{i-3})^2+(\delta^2_{i-3})^2+{\frac {19}{6}}\,\delta^4_{i-3}\,\delta^3_{i-3}+\frac{2}{3}\,\delta^4_{i-3}\,\delta^2_{i-3}+3\,\delta^3_{i-3}\,\delta^2_{i-3},\\
\tilde I_{2}^4&={\frac {89}{80}}\,(\delta^4_{i-2})^2+\frac{4}{3}\,(\delta^3_{i-2})^2+(\delta^2_{i-2})^2-\frac{1}{6}\,\delta^4_{i-2}\,\delta^3_{i-2}-\frac{1}{3}\,\delta^4_{i-2}\,\delta^2_{i-2}+\delta^3_{i-2}\,\delta^2_{i-2},\\
\tilde I_{3}^4&={\frac {547}{240}}\,(\delta^4_{i-1})^2+\frac{4}{3}\,(\delta^3_{i-1})^2+(\delta^2_{i-1})^2-\frac{5}{2}\,\delta^4_{i-1}\,\delta^3_{i-1}+\frac{2}{3}\,\delta^4_{i-1}\,\delta^2_{i-1}-\delta^3_{i-1}\,\delta^2_{i-1}.
\end{aligned}
\end{equation}

Once defined the smoothness indicators for the level $r$, we will construct the rest of indicators for each level from $2r-2$ to $r+1$.
Theorem \ref{ord_IS} allows us to have a clearer vision about the behavior of the smoothness indicators. We can see that they get a value that is $O(h^4)$ at smooth zones. As they can be expressed in terms of differences, they get a value that is $O(1)$ when their stencil contains a discontinuity.

In \cite{generalizacion}, in order to reduce the computational cost of the calculation of smoothness indicators of high order through (\ref{si_nuestro2}), we proposed to use a function of the smoothness indicators used by the classical WENO algorithm. For $r=3$ we used as smoothness indicators of five points,
\begin{equation}\label{IS6}
\begin{aligned}
\tilde{I}_{0,0}^4&=\tilde{I}_{0}^3+\tilde{I}_{1}^3,\\
\tilde{I}_{0,1}^4&=\tilde{I}_{1}^3+\tilde I_{2}^3.
\end{aligned}
\end{equation}
These smoothness indicators provide optimal accuracy for $r=3$ in the sense that the pattern of accuracy obtained close to a discontinuity is $\cdots,O(h^6), O(h^5),O(h^4), O(1), O(h^4), O(h^5), O(h^6), \cdots$.
For $r=4$ we proposed to use as smoothness indicators of six and seven points $\tilde I_{0,0}^5, \tilde I_{0,1}^5, \tilde I_{1,2}^5, \tilde I_{0,0}^6, \tilde I_{0,1}^6$ defined as,
\begin{equation}\label{IS8}
\begin{aligned}
\tilde I_{0,0}^5&=\tilde I_{0}^4+\tilde I_{1}^4,\quad
\tilde I_{0,1}^5=\tilde I_{0,1}^5=\tilde I_{1}^4+\tilde I_{2}^4,\quad
\tilde I_{1,2}^5=\tilde I_{2}^4+\tilde I_{3}^4,\\
\tilde I_{0,0}^6&=\tilde I_{0}^4+I_{1}^4+\tilde I_{2}^4,\quad
\tilde I_{0,1}^6=\tilde I_{1}^4+\tilde I_{2}^4+\tilde I_{3}^4.
\end{aligned}
\end{equation}

In this case, these smoothness indicators do not provide optimal accuracy, providing the typical pattern of accuracy close to the discontinuity $\cdots, O(h^8), O(h^7), O(h^5), O(h^5), O(1), O(h^5), O(h^5), O(h^7), O(h^8), \cdots$ instead of the optimal one $\cdots, O(h^8), O(h^7), O(h^6), O(h^5), O(1), O(h^5), O(h^6), O(h^7), O(h^8), \cdots$. The reason is the following. Let's observe Figure \ref{disc_der1}, that represents the stencil of eight points considered for $r=4$. In this figure we have represented a discontinuity in the interval $(x_{i+1}, x_{i+2})$ (the case for the discontinuity in the interval $(x_{i-3}, x_{i-2})$ is symmetric). Let's now recall the expression of the interpolation proposed in \cite{generalizacion} and that we have reproduced in (\ref{pesos4_3}). The smoothness indicators $\tilde I_{0,0}^6$ and $\tilde I_{0,1}^6$ in (\ref{IS8}) use the data placed at the positions, $\{x_{i-4}, \cdots, x_{i+2}\}$ and $\{x_{i-3}, \cdots, x_{i+3}\}$ respectively. Looking at their expressions in (\ref{IS8}), it is clear that the weights $\tilde{\omega}_{0,0}^6$ and $\tilde{\omega}_{0,1}^6$ in (\ref{pesos4_2}) are both affected by the discontinuity, so both of them will have $O(1)$ accuracy. For a good approximation of the case represented in Figure \ref{disc_der1}, $\tilde{\omega}_{0,0}^6\approx 1$ and $\tilde{\omega}_{0,1}^6\approx 0$ in order to keep the final weights close to the optimal ones. The problem would be solved if $\tilde{\omega}_{1,1}^5\approx 0$ and $\tilde{\omega}_{1,2}^5\approx 0$ but this is not possible, as the algorithm balances the weights in pairs of two, thus, if one is close to zero the other one must be close to one. The problem is solved in this case if we select the smoothness indicators in pairs as,
\begin{equation}\label{IS8_2}
\begin{split}
\tilde I_{0,0}^6&=\tilde I_{0}^4,\quad \tilde I_{0,1}^6=\tilde I_{3}^4,\\
\tilde I_{0,0}^5&=\tilde I_{0}^4,\quad \tilde I_{0,1}^5=\tilde I_{2}^4,\\
\tilde I_{1,1}^5&=\tilde I_{1}^4,\quad \tilde I_{1,2}^5=\tilde I_{3}^4.
\end{split}
\end{equation}
Now it is clear that $\tilde{\omega}_{0,0}^6$ {\it watches} the discontinuity and it gets a value $\tilde{\omega}_{0,0}^6\approx 1$ while $\tilde{\omega}_{0,1}^6\approx 0$, cleaning the garbage introduced if we use (\ref{IS8}) and similarity for $\tilde{\omega}_{k,k_1}^5$ with $k=0,1, k_1=k+1$. With this strategy we only compute the smoothness indicators of degree $r$, that are the ones used by the classical WENO algorithm, improving the computational efficiency.

\begin{figure}[!ht]
\begin{center}
\resizebox{12cm}{!} {
\begin{picture}(500,30)(0,0)
\put(0,-10){\line(1,0){560}}

\put(0,-10){\line(0,1){5}}
\put(80,-10){\line(0,1){5}}
\put(160,-10){\line(0,1){5}}
\put(240,-10){\line(0,1){5}}
\put(320,-10){\line(0,1){5}}
\put(400,-10){\line(0,1){5}}
\put(480,-10){\line(0,1){5}}
\put(560,-10){\line(0,1){5}}
\put(440,-12){$\times$}
\put(-5,5){$f_{i-4}$}
\put(75,5){$f_{i-3}$}
\put(155,5){$f_{i-2}$}
\put(235,5){$f_{i-1}$}
\put(315,5){$f_{i}$}
\put(395,5){$f_{i+1}$}
\put(475,5){$f_{i+2}$}
\put(555,5){$f_{i+3}$}

\put(-5,-23){$x_{i-4}$}
\put(75,-23){$x_{i-3}$}
\put(155,-23){$x_{i-2}$}
\put(235,-23){$x_{i-1}$}
\put(315,-23){$x_{i}$}
\put(395,-23){$x_{i+1}$}
\put(475,-23){$x_{i+2}$}
\put(555,-23){$x_{i+3}$}

\end{picture}
}
\end{center}
\caption{Representation of the values considered in the stencil and a discontinuity placed in the interval $[x_{i+1}, x_{i+2}]$.}\label{disc_der1}
\end{figure}
In the following subsection we give a general formula to compute the smoothness indicator for any $r$.
\subsection{General smoothness indicators}\label{generalsmoothness}

In this subsection we provide a general formula to obtain any smoothness indicator $I_{k,k1}^{l}$ in terms of the classical WENO smoothness indicators $\tilde I^r_k$ for $r$. We follow the same construction presented in all the section. We define the different weights using the following formulas,
\begin{definition}\label{def1}
Let be $l=r+1,\dots, 2r-2$, and $\tilde I^r_k$, with $k=0,\hdots,r-1$ the smoothness indicators showed in Eq. \eqref{si_nuestro2}, then we define
the smoothness indicators at level $l$ as:
\begin{equation}\label{equationindicadores}
\begin{split}
&\tilde I_{k,k}^{l}=\tilde I_k^r, \quad k=0,\dots,(2r-2)-l,\\
&\tilde I_{k,k+1}^{l}=\tilde I_{l-(r-1)+k}^r, \quad k=0,\dots,(2r-2)-l.
\end{split}
\end{equation}
\end{definition}

The idea is to detect in what part of the tree diagram is placed the discontinuity and to force that this branch is automatically discarded through the values of the smoothness indicators and, consequently, the weights. Then continue with the following sub-branch and again force the automatic discarding of this branch if the stencil contains a discontinuity. We repeat this process in each pass until we obtain only one branch free of discontinuities. 

We have already described the smoothness indicators for $r=3$ and $r=4$.  For $r=5$, using Eq. \eqref{equationindicadores} we get:

\begin{equation}\label{IS10}
\begin{split}
\tilde I_{0,0}^8&=\tilde I_{0}^5,\quad \tilde I_{0,1}^8=\tilde I_{4}^5, \\
\tilde I_{0,0}^7&=\tilde I_{0}^5,\quad \tilde I_{0,1}^7=\tilde I_{3}^5,\quad \tilde I_{1,1}^7=\tilde I_{1}^5, \quad  \tilde I_{1,2}^7=\tilde I_{4}^5,\\
\tilde I_{0,0}^6&=\tilde I_{0}^5,\quad \tilde I_{0,1}^6=\tilde I_{2}^5,\quad \tilde I_{1,1}^6=\tilde I_{1}^5, \quad \tilde I_{1,2}^6=\tilde I_{3}^5.\\
\end{split}
\end{equation}

\begin{figure}[!ht]
\begin{center}
\resizebox{11cm}{!} {
\begin{picture}(500,30)(0,0)
\put(-80,-10){\line(1,0){720}}

\put(-80,-10){\line(0,1){5}}
\put(0,-10){\line(0,1){5}}
\put(80,-10){\line(0,1){5}}
\put(160,-10){\line(0,1){5}}
\put(240,-10){\line(0,1){5}}
\put(320,-10){\line(0,1){5}}
\put(400,-10){\line(0,1){5}}
\put(480,-10){\line(0,1){5}}
\put(560,-10){\line(0,1){5}}
\put(640,-10){\line(0,1){5}}
\put(520,-12){$\times$}
\put(440,-12){$\times$}
\put(-85,5){$f_{i-5}$}
\put(-5,5){$f_{i-4}$}
\put(75,5){$f_{i-3}$}
\put(155,5){$f_{i-2}$}
\put(235,5){$f_{i-1}$}
\put(315,5){$f_{i}$}
\put(395,5){$f_{i+1}$}
\put(475,5){$f_{i+2}$}
\put(555,5){$f_{i+3}$}
\put(635,5){$f_{i+4}$}
\put(-85,-23){$x_{i-4}$}
\put(-5,-23){$x_{i-4}$}
\put(75,-23){$x_{i-3}$}
\put(155,-23){$x_{i-2}$}
\put(235,-23){$x_{i-1}$}
\put(315,-23){$x_{i}$}
\put(395,-23){$x_{i+1}$}
\put(475,-23){$x_{i+2}$}
\put(555,-23){$x_{i+3}$}
\put(635,-23){$x_{i+4}$}
\end{picture}
}
\end{center}
\caption{Representation of the values considered in the stencil and a discontinuity placed in the interval $[x_{i+1}, x_{i+2}]$.}\label{disc_der2}
\end{figure}


Finally, we prove the following important results that we will use in the next section in order to analyze the accuracy of the new method.

\begin{lemma}\label{lemaaux1}
Let be $0\leq k,k_1 \leq r-2$ and $\tilde{I}^r_{n}$, $n=k,k_1$  be smoothness indicators of $f$ on the stencil $S^r_n=\{x_{i+n-r},\cdots, x_{i+n}\}$. If $f \in C^r([x_{i+n-r},x_{i+n}])$, then
\begin{equation*}
\tilde{I}_k^r-\tilde{I}_{k_1}^r=O(h^{r+3}),
\end{equation*}
\end{lemma}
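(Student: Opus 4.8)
The plan is to work directly from the definition (\ref{si_nuestro2}) of the smoothness indicators rather than from the compact form supplied by Theorem \ref{ord_IS}. The reason is that the multiplicative factor $(1+O(h^2))$ appearing there conceals a $k$-dependent $O(1)$ constant, so subtracting two such expressions would only yield an $O(h^6)$ bound, which is weaker than the claimed $O(h^{r+3})$ as soon as $r>3$. Instead I would write the difference as
\begin{equation*}
\tilde{I}_k^r-\tilde{I}_{k_1}^r=\sum_{l=2}^{r}h^{2l-1}\int_{x_{i-1}}^{x_i}\left[\left((p^r_k)^{(l)}(x)\right)^2-\left((p^r_{k_1})^{(l)}(x)\right)^2\right]dx,
\end{equation*}
and first record the geometric fact that, since $0\leq k,k_1\leq r-2$, the interval $[x_{i-1},x_i]$ (hence the integration domain and the evaluation point $x_{i-1/2}$) lies inside the convex hull of both stencils $S^r_k$ and $S^r_{k_1}$. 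Therefore $f$ is smooth on $[x_{i-1},x_i]$ for each stencil and standard interpolation-error estimates apply there.

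The key step is the difference-of-squares factorisation, performed termwise,
\begin{equation*}
\left((p^r_k)^{(l)}\right)^2-\left((p^r_{k_1})^{(l)}\right)^2=\left((p^r_k)^{(l)}-(p^r_{k_1})^{(l)}\right)\left((p^r_k)^{(l)}+(p^r_{k_1})^{(l)}\right).
\end{equation*}
Each $p^r_k$ is the degree-$r$ interpolant of $f$ on the $r+1$ nodes of $S^r_k$, so the derivative interpolation error obeys $(p^r_k)^{(l)}(x)-f^{(l)}(x)=O(h^{r+1-l})$ uniformly on $[x_{i-1},x_i]$. Consequently the first factor, being the difference of two such errors, is $O(h^{r+1-l})$, while the second factor equals $2f^{(l)}(x)+O(h^{r+1-l})=O(1)$. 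This is exactly where the $k$-independent leading part $(f^{(l)})^2$ cancels: the factorisation replaces an $O(1)$ integrand by one of size $O(h^{r+1-l})$, and the product is therefore $O(h^{r+1-l})$.

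Finally I would collect the orders. Integrating an $O(h^{r+1-l})$ integrand over the interval $[x_{i-1},x_i]$ of length $h$ contributes one extra factor of $h$, giving $O(h^{r+2-l})$; multiplying by the prefactor $h^{2l-1}$ yields $O(h^{r+l+1})$ for the $l$-th summand. Since $r+l+1$ is increasing in $l$, the dominant contribution comes from $l=2$, so the whole sum is $O(h^{r+3})$, which is the claim. The main obstacle is not the algebra but justifying the uniform derivative bound $(p^r_k)^{(l)}(x)-f^{(l)}(x)=O(h^{r+1-l})$ on the common subinterval; this is precisely where the smoothness hypothesis on $f$ over each stencil is used, and it is the cancellation exposed by the difference-of-squares step that upgrades the naive $O(h^4)$ size of each individual indicator to the gain of the full factor $h^{r-1}$ in their difference.
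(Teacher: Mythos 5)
Your proof is correct and takes essentially the same route as the paper's: both arguments rest on the interpolation-error bound $(p^r_n)^{(l)}-f^{(l)}=O(h^{r+1-l})$ valid on the common interval $[x_{i-1},x_i]$ and on the cancellation of the $k$-independent leading term $(f^{(l)})^2$, the only cosmetic difference being that the paper compares each indicator to the reference quantity $\sum_{l=2}^{r}h^{2l-1}\int_{x_{i-1}}^{x_i}(f^{(l)}(x))^2\,dx$ and concludes by the triangle inequality, whereas you factor the difference of squares between the two polynomials directly. The order bookkeeping, $h^{2l-1}\cdot h\cdot O(h^{r+1-l})=O(h^{r+l+1})$ with the dominant contribution at $l=2$, is identical in both.
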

\begin{proof}
Let $p_{k}^r,p_{k_1}^r$ be the two interpolating polynomials of $f$ of degree $r\geq 3$ at nodes in the stencil $S_{n}^r$ $n=k,k_1$ at a distance $O(h)$, then
if $l\geq 2$:

We follow the sketch of the proof presented in \cite{AMB}, thus from $(f^{(l)})- (p_{n}^r)^{(l)}=O(h^{r+1-l})$, we get
\begin{equation}
\begin{split}
(f^{(l)})^2- ((p^r_{n})^{(l)})^2&=-\left((f^{(l)}- (p^r_{n})^{(l)})^2+2((p^r_{n})^{(l)}-f^{(l)})f^{(l)}\right)\\
&=(O(h^{r+1-l}))^2 +(O(h^{r+1-l}))O(1)\\
&=O(h^{r+1-l}),\\
\end{split}
\end{equation}
if we substitute in \ref{si_nuestro2}, we obtain
\begin{equation*}
\begin{split}
\left|\tilde I_{n}^r-\sum_{l=2}^{r} h^{2l-1}\int_{x_{i-1}}^{x_{i}}\left(f^{(l)}(x)\right)^2 dx\right|
&\leq\left|\sum_{l=2}^{r} h^{2l-1}\int_{x_{i-1}}^{x_{i}}\left((p^r_{n})^{(l)}(x)\right)^2-\left(f^{(l)}(x)\right)^2 dx\right|\\
&= \sum_{l=2}^{r} h^{2l}O(h^{r+1-l})\\
&= O(h^{r+3}).
\end{split}
\end{equation*}
Finally,
\begin{equation}
\begin{split}
\left|\tilde I_{k}^r-\tilde I_{k_1}^r\right|&\leq \left|\tilde I_{k}^r-\sum_{l=2}^{r} h^{2l-1}\int_{x_{i-1}}^{x_{i}}\left(f^{(l)}(x)\right)^2 dx\right|+\left|\tilde I_{k+1}^r-\sum_{l=2}^{r} h^{2l-1}\int_{x_{i-1}}^{x_{i}}\left(f^{(l)}(x)\right)^2 dx\right|\\
&= O(h^{r+3}).
\end{split}
\end{equation}
\end{proof}

\begin{proposition}\label{prop1}
Let be $r+1\leq l\leq 2r-2$, $0\leq k\leq (2r-2)-l$ and $\tilde I^l_{k,k}$ and $\tilde I^l_{k,k+1}$ be smoothness indicators defined in Def. \ref{def1}, that can be expressed, following Theorem \ref{ord_IS}, through Taylor expansion as $\tilde I^l_{k,k}=\left(h^2 p''(x_{i-1/2})\right)^2(1+O(h^2))$ if the stencil is smooth, $\tilde I^l_{k,k}=O(1)$ if the stencil is affected by a discontinuity in the function and $\tilde I^l_{k,k}=O(h)$ if the stencil is affected by a discontinuity in the first derivative. In this case, any weights expressed as
\begin{equation}\label{w_teo1}
\begin{aligned}
\tilde{\omega}^l_{k,k}=\frac{\tilde{\alpha}_{k,k}^l}{\tilde{\alpha}_{k,k}^l+\tilde{\alpha}_{k,k+1}^l},\quad
\tilde{\omega}^l_{k,k+1}=\frac{\tilde{\alpha}_{k,k+1}^l}{\tilde{\alpha}_{k,k}^l+\tilde{\alpha}_{k,k+1}^l},
\end{aligned}
\end{equation}
with,
\begin{equation}\label{alpha_teo1}
\begin{aligned}
\tilde{\alpha}_{k,k}^l=\frac{C_{k,k}^l}{(\epsilon+\tilde{I}_{k,k}^l)^t},\quad
\tilde{\alpha}_{k,k+1}^l=\frac{C_{k,k+1}^l}{(\epsilon+\tilde{I}_{k,k+1}^l)^t}.
\end{aligned}
\end{equation}
with $C^l_{k,k}+C^l_{k,k+1}=1$, will fall under one of the following cases:
\begin{enumerate}
\item If neither $\tilde{I}^l_{k,k}$ nor $\tilde I^l_{k,k+1}$ are affected by a discontinuity, then $\tilde{\omega}_{k,k}^l=C^l_{k,k}+O(h^{r-1})$ and $\tilde{\omega}_{k,k+1}^l=C^l_{k,k+1}+O(h^{r-1})$.
\item If $\tilde{I}^l_{k,k+1}$ is affected by a singularity, then $\tilde{\omega}_{k,k}^l=1+O(h^{2mt})$ and $\tilde{\omega}_{k,k+1}^l=O(h^{2mt})$.
\item If $\tilde{I}^l_{k,k}$ is affected by a singularity then $\tilde{\omega}_{k,k+1}^l=1+O(h^{2mt})$ and $\tilde{\omega}_{k,k}^l=O(h^{2mt})$.
\item If $\tilde{I}^l_{k,k}$ and $\tilde{I}^l_{k,k+1}$ are affected by a singularity then $\tilde{\omega}_{k,k}^l=O(1)$ and $\tilde{\omega}_{k,k+1}^l=O(1)$.
\end{enumerate}
With $m=2$ if the discontinuity is in the function and $m=1$ if the discontinuity is in the first derivative.
\end{proposition}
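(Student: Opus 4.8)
The plan is to reduce all four cases to a single size comparison between the two quantities $(\epsilon+\tilde I^l_{k,k})^t$ and $(\epsilon+\tilde I^l_{k,k+1})^t$. First I would clear the inner denominators in \eqref{w_teo1}--\eqref{alpha_teo1} and rewrite the weight as
\begin{equation*}
\tilde{\omega}^l_{k,k}=\frac{C^l_{k,k}\,(\epsilon+\tilde I^l_{k,k+1})^t}{C^l_{k,k}\,(\epsilon+\tilde I^l_{k,k+1})^t+C^l_{k,k+1}\,(\epsilon+\tilde I^l_{k,k})^t},
\end{equation*}
and the analogous expression for $\tilde{\omega}^l_{k,k+1}$, so that the behaviour of both weights is governed entirely by the ratio $\rho:=(\epsilon+\tilde I^l_{k,k})^t/(\epsilon+\tilde I^l_{k,k+1})^t$ together with the normalisation $C^l_{k,k}+C^l_{k,k+1}=1$. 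Throughout I take $\epsilon$ negligible against the smooth-zone size of the indicators (i.e.\ $\epsilon=O(h^4)$ or smaller, as in the numerical setting), which makes $\epsilon$ invisible to the leading orders computed below.

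The delicate case is Case 1, where neither indicator sees the discontinuity. Here I would invoke Theorem~\ref{ord_IS} to obtain $\tilde I^l_{k,k},\tilde I^l_{k,k+1}=\Theta(h^4)$ (using that the leading coefficient $(p''(x_{i-1/2}))^2$ is nonzero, i.e.\ away from critical points), and Lemma~\ref{lemaaux1}---applied to the two $r$-point stencils that, by Definition~\ref{def1}, carry these two indicators---to obtain $\tilde I^l_{k,k}-\tilde I^l_{k,k+1}=O(h^{r+3})$. Writing $\tilde I^l_{k,k}=\tilde I^l_{k,k+1}+O(h^{r+3})$ and dividing, the relative difference is $O(h^{r+3}/h^4)=O(h^{r-1})$, so a binomial/Taylor expansion of the $t$-th power gives $\rho=1+O(h^{r-1})$. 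Substituting into the displayed formula and simplifying with $C^l_{k,k}+C^l_{k,k+1}=1$ yields $\tilde{\omega}^l_{k,k}=C^l_{k,k}+O(h^{r-1})$ and $\tilde{\omega}^l_{k,k+1}=C^l_{k,k+1}+O(h^{r-1})$, which is exactly Case 1.

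For the remaining cases only the crude size of the indicators matters. In Case 2, $\tilde I^l_{k,k}=\Theta(h^4)$ is of smooth-zone size while $\tilde I^l_{k,k+1}$ is of the (much larger) singular size recorded in the hypothesis; consequently $\tilde{\alpha}^l_{k,k}$ dominates $\tilde{\alpha}^l_{k,k+1}$ with $\tilde{\alpha}^l_{k,k+1}/\tilde{\alpha}^l_{k,k}=O(h^{2mt})$, which is precisely the balance underlying the $O(h^{2mt})$ behaviour already stated for the single-level weights. The ratio then gives $\tilde{\omega}^l_{k,k+1}=O(h^{2mt})$ and, since the two weights sum to $1$, $\tilde{\omega}^l_{k,k}=1+O(h^{2mt})$. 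Case 3 is identical after exchanging the roles of $k$ and $k+1$. Case 4 is immediate: the weights are nonnegative ratios summing to $1$, hence each is trivially $O(1)$, and when both indicators are corrupted no cancellation forces either of them towards $0$, so $O(1)$ is the best one can assert.

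The main obstacle is Case 1: everything hinges on pairing the sharp difference estimate $O(h^{r+3})$ of Lemma~\ref{lemaaux1} against the exact order $\Theta(h^4)$ of the indicators so that the quotient lands precisely at $O(h^{r-1})$; this is where both the nondegeneracy $p''(x_{i-1/2})\neq 0$ and the sharpness of Lemma~\ref{lemaaux1} are essential, since a weaker difference bound would degrade the exponent. A minor point to verify is that Lemma~\ref{lemaaux1} is here applied to indicator indices up to $r-1$, one beyond its stated range $\le r-2$; its proof, however, only uses that the two stencils are smooth and $O(h)$ apart, so it carries over verbatim.
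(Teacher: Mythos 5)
Your proposal is correct and follows essentially the same route as the paper: Case 1 rests on combining Lemma \ref{lemaaux1} (the $O(h^{r+3})$ difference bound) with the $O(h^4)$ size of the indicators from Theorem \ref{ord_IS} to get a ratio of the form $1+O(h^{r-1})$ after the $t$-th power expansion, and Cases 2--4 reduce to crude size comparisons of the $\tilde\alpha$'s exactly as in the paper. Your side remark that Lemma \ref{lemaaux1} must be applied to an index $k_1=l-(r-1)+k$ that can reach $r-1$, one past its stated range, is a valid observation the paper glosses over, but it does not change the argument.
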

\begin{proof}
We follow the ideas presented in \cite{AMB}. Let be $r+1\leq l \leq (2r-2)$ and $0\leq k\leq  (2r-2)-l$, by Def. \ref{def1}:
\begin{equation}
\begin{split}
&\tilde I_{k,k}^{l}=\tilde I_k^r, \quad \tilde I_{k,k+1}^{l}=\tilde I_{l-(r-1)+k}^r=\tilde I_{k_1}^r,
\end{split}
\end{equation}
being $k_1=l-(r-1)+k$, doing algebraic manipulations, we have
$$\frac{\frac{1}{(\epsilon+\tilde{I}_k^r)^t}+\frac{1}{(\epsilon+\tilde{I}_{k_1}^r)^t}}{\frac{1}{(\epsilon+\tilde{I}_{k_1}^r)^t}}
=\frac{\tilde{I}_{k_1}^r-\tilde{I}_{k}^r}{\epsilon+\tilde{I}_{k}^r}\sum_{j=0}^{t-1}\left(\frac{\epsilon+\tilde{I}^r_{k_1}}{\epsilon+\tilde{I}^r_{k}}\right)^j,$$
then, using Lemma \ref{lemaaux1}, the previous equality transforms into,
\begin{equation}\label{eq1}
\frac{1}{(\epsilon+\tilde{I}_k^r)^t}=\frac{1+O(h^{r-1})}{(\epsilon+\tilde{I}_{k_1}^r)^t}.
\end{equation}
If there is no singularity affecting $\tilde I^r_{k,k}=\tilde{I}_k^r$ or $\tilde I^r_{k,k+1}=\tilde{I}_{k_1}^r$ then, using Eq. \eqref{eq1}, we have
\begin{equation}\label{caso1}
\begin{split}
\tilde{\omega}^l_{k,k}&=\frac{\frac{C_{k,k}^l}{(\epsilon+\tilde I_{k,k}^l)^t}}{\frac{C^l_{k,k}}{(\epsilon+\tilde I_{k,k}^l)^t}+\frac{C^l_{k,k+1}}{(\epsilon+\tilde I_{k,k+1}^l)^t}}=\frac{\frac{C_{k,k}^l}{(\epsilon+\tilde I_{k}^r)^t}}{\frac{C_{k,k}^l}{(\epsilon+\tilde I_{k}^r)^t}+\frac{\tilde C_{k,k+1}^l}{(\epsilon+\tilde I_{k_1}^r)^t}}=\frac{\frac{C_{k,k}^l(1+O(h^{r-1}))}{(\epsilon+\tilde{I}_{k_1}^r)^t}}{\frac{C_{k,k}^l(1+O(h^{r-1}))}{(\epsilon+\tilde{I}_{k_1}^r)^t}+\frac{C^l_{k,k+1}}{(\epsilon+\tilde I_{k_1}^r)^t}}=\frac{C_{k,k}^l(1+O(h^{r-1}))}{C_{k,k}^l(1+O(h^{r-1}))+C^l_{k,k+1}}\\
 &=C_{k,k}^l+O(h^{r-1}).\\
\text{Analogously,}&\\
\tilde{\omega}^l_{k,k+1}&=C_{k,k+1}^l+O(h^{r-1}).
\end{split}
\end{equation}
If, following Theorem \ref{ord_IS},  $\tilde I^l_{k,k}=K(1+O(h^2))$ if the stencil is smooth and $\tilde I^l_{k,k}=O(1)$ if the stencil is affected by a discontinuity and we consider that $\epsilon$ is small enough, then,
\begin{enumerate}
\item If the stencil contains a singularity that only affects the smoothness indicator $\tilde I^l_{k,k+1}$, then $\tilde I^l_{k,k}=\left(h^2 p''(x_{i-1/2})\right)^2(1+O(h^2))=O(h^{2m})$ with $m=2$ if the discontinuity is in the function and $m=1$ if the discontinuity is in the first derivative,  and $\tilde I^l_{k,k+1}=O(1)$. Then,
\begin{equation}\label{caso2}
\begin{split}
\tilde{\omega}^l_{k,k}&=\frac{C_{k,k}^l(\epsilon+\tilde I_{k,k+1}^l)^t}{C_{k,k}^l(\epsilon+\tilde I_{k,k+1}^l)^t+C_{k,k+1}^l(\epsilon+\tilde I_{k,k}^l)^t}=\frac{C_{k,k}^l(\epsilon+\tilde I_{k,k+1}^l)^t}{C_{k,k}^l(\epsilon+\tilde I_{k,k+1}^l)^t+O(h^{2mt})}=\frac{C_{k,k}^l(\epsilon+\tilde I_{k,k+1}^l)^t}{C_{k,k}^l(\epsilon+\tilde I_{k,k+1}^l)^t}\frac{1}{1+\frac{O(h^{2mt})}{C_{k,k}^l(\epsilon+\tilde I_{k,k+1}^l)^t}}\\
&=1+O(h^{2mt}),\\
\tilde{\omega}^l_{k,k+1}&=\frac{C_{k,k+1}^l(\epsilon+\tilde I_{k,k}^l)^t}{C_{k,k}^l(\epsilon+\tilde I_{k,k+1}^l)^t+C_{k,k+1}^l(\epsilon+\tilde I_{k,k}^l)^t}=\frac{O(h^{2mt})}{O(1)+O(h^{2mt})}=O(h^{2mt}).
\end{split}
\end{equation}
\item The case when the stencil contains a singularity that only affects the smoothness indicator $I_{k,k}$ can be obtained by symmetry.
\item If $\tilde I^l_{k,k}$ and $\tilde I^l_{k,k+1}$ are affected by a singularity then,
\begin{equation}\label{caso4}
\begin{aligned}
\tilde{\omega}^l_{k,k}&=\frac{C_{k,k}^l(\epsilon+\tilde I_{k,k+1}^l)^t}{C_{k,k}^l(\epsilon+\tilde I_{k,k+1}^l)^t+C_{k,k+1}^l(\epsilon+\tilde I_{k,k}^l)^t}=\frac{C_{k,k}^l(\epsilon+\tilde I_{k,k+1}^l)^t}{C_{k,k}^l(\epsilon+\tilde I_{k,k+1}^l)^t+O(h^{2mt})}=\frac{C_{k,k}^l(\epsilon+\tilde I_{k,k+1}^n)^t}{C_{k,k}^n(\epsilon+\tilde I_{k,k+1}^l)^t}\frac{1}{1+\frac{O(h^{2mt})}{O(h^{2mt})}}=O(1),\\
\tilde{\omega}^l_{k,k+1}&=\frac{C_{k,k+1}^l(\epsilon+\tilde I_{k,k}^l)^t}{C_{k,k}^l(\epsilon+\tilde I_{k,k+1}^l)^t+C_{k,k+1}^n(\epsilon+\tilde I_{k,k}^l)^t}=O(1).
\end{aligned}
\end{equation}

\end{enumerate}
\end{proof}


\section{Analysis of the accuracy}\label{analisisaccuracy}

Now we can try to analyze the accuracy of each individual dyadic WENO algorithm that we perform at every step of the new construction. In general, to prove the accuracy of this algorithm we will need three steps:
\begin{enumerate}
\item Obtaining the value that the nonlinear optimal weights get for each position of the discontinuity.
\item Checking the error obtained between the weights of the classical WENO (that is performed at the last step) obtained using the nonlinear optimal weights and the weights that provide optimal accuracy.
\item Checking the accuracy obtained by classical WENO interpolation.
\end{enumerate}

\subsection{Analysis of the accuracy for any $r$}\label{section2}

We present in this section a general study of the accuracy of the new algorithm using the formulas described in Section \ref{newweno}. Therefore,
the weights are given by:
\begin{equation*}
\begin{split}
(\tilde{C}_0^r,\tilde{C}_1^r,\hdots,\tilde{C}_{r-2}^r,\tilde{C}_{r-1}^r)=\sum_{j_0=0}^1 \tilde{\omega}^{2r-2}_{0,j_0}\left(\sum_{j_1=j_0}^{j_0+1}\tilde{\omega}_{j_0,j_1}^{2r-3}\left(\sum_{j_2=j_1}^{j_1+1}\tilde{\omega}_{j_1,j_2}^{2r-4}
\left(\dots\left(\sum_{j_{r-2}=j_{r-3}}^{j_{r-3}+1} \tilde{\omega}^{r+1}_{j_{r-3},j_{r-2}}{\bf  C_{j_{r-2}}^{r+1}} \right)\dots\right)\right)\right)
\end{split}
\end{equation*}
where the values $\tilde{\omega}_{k,k_1}^l$ with $l=r+1,\dots,2r-2$; $0\leq k \leq (2r-2)-l$; $k_1=k+1$ and ${\bf C_{k}^{r+1}}$ being $0\leq k\leq r-2$ are defined in Eqs. \eqref{pesosr} and \eqref{nl_op_w_r}.  The coordinates of the vector $(\tilde \omega^r_0,\hdots,\tilde \omega^r_{r-1})$ have been defined in Eq. \eqref{pesosr1finales}, i.e.
\begin{equation}
\begin{split}
&\tilde \omega^r_{k}=\frac{\tilde \alpha_{k}^r}{\sum_{s=0}^{r-1}\tilde \alpha_s^r},\quad k=0,\dots,r-1,\\
&\tilde \alpha_{k}^r=\frac{\tilde C_{k}^r}{(\epsilon+\tilde I^r_{k})^t}, \quad k=0,\dots,r-1.\\
\end{split}
\end{equation}
We analyze the different possibilities,
\begin{itemize}
\item Firstly, we suppose that any discontinuity does not cross the stencil $\{x_{i-r},\dots,x_{i+r-1}\}$ then by Prop. \ref{prop1}, we get for all $l=r+1,\dots,2r-2$, $0\leq k \leq (2r-2)-l$ and $k_1=k+1$ that
$$\tilde{\omega}_{k,k_1}^l = \tilde C^l_{k,k_1}+O(h^{r-1}).$$
Then, by construction of the optimal weights, Eq. \eqref{equationimpo},
\begin{equation*}
\begin{split}
(\tilde \omega_0^r,\hdots,\tilde \omega_{r-1}^r)&=(\tilde{C}_0^r,\hdots,\tilde{C}_{r-1}^r)+O(h^{r-1})\\
&=\sum_{j_0=0}^1 \tilde{\omega}^{2r-2}_{0,j_0}\left(\sum_{j_1=j_0}^{j_0+1}\tilde{\omega}_{j_0,j_1}^{2r-3}\left(\sum_{j_2=j_1}^{j_1+1}\tilde{\omega}_{j_1,j_2}^{2r-4}
\left(\dots\left(\sum_{j_{r-2}=j_{r-3}}^{j_{r-3}+1} \tilde{\omega}^{r+1}_{j_{r-3},j_{r-2}}{\bf C_{j_{r-2}}^{r+1}} \right)\dots\right)\right)\right)+O(h^{r-1})\\
&=\sum_{j_0=0}^1 C^{2r-2}_{0,j_0}\left(\sum_{j_1=j_0}^{j_0+1}C_{j_0,j_1}^{2r-3}\left(\sum_{j_2=j_1}^{j_1+1}C_{j_1,j_2}^{2r-4}
\left(\dots\left(\sum_{j_{r-2}=j_{r-3}}^{j_{r-3}+1} C^{r+1}_{j_{r-3},j_{r-2}}{\bf C_{j_{r-2}}^{r+1}} \right)\dots\right)\right)\right)+O(h^{r-1})\\
&=(\bar{C}_0^r,\hdots,\bar{C}_{r-1}^r)+O(h^{r-1})\\
\end{split}
\end{equation*}

\item If there exists a discontinuity at $[x_{i-1},x_{i}]$ by Prop. \ref{prop1}, for all  $l=r+1,\dots,2r-2$, $0\leq k \leq (2r-2)-l$ and $k_1=k+1$ we get:
    $$\tilde{\omega}_{k,k_1}^l=O(1).$$

\item By symmetry, we only analyze when there exists an isolated discontinuity at an interval $[x_{i-1+l_0},x_{i+l_0}]$, $l_0=1,\hdots, r-1$ (analogously, we obtain the equivalent symmetric results for $[x_{i-r+l_0},x_{i-r+l_0+1}]$, $l_0=0,\hdots,r-2$).  In order to study these cases, we prove the following results.
\end{itemize}

\begin{lemma}\label{lemmaauxi}
Let be $\tilde{\omega}^r_k$, $k=0,\hdots,r-1$, be the nonlinear weights
 defined in Eq. \eqref{pesosr1finales}. If there exists $0\leq l_0\leq r-1$ such that
 $I_{l_0}^r$ is affected by a discontinuity and
 $$(\tilde{C}^r_0,\hdots,\tilde{C}^r_{r-1})=(C^r_0+O(h^s),\hdots,C^r_{l_0-1}+O(h^s),O(h^{2mt}),\hdots,O(h^{2mt}))$$
 with $\sum_{k=0}^{l_0-1}C^r_k=1$, $1\leq s\leq 2mt$, $m=2$ if the discontinuity is in the function and $m=1$ if the discontinuity is in the first derivative, then:
\begin{equation*}
\begin{split}
&\tilde{\omega}^r_{k}=C^r_k+O(h^s), \quad 0\leq k<l_0,\\
&\tilde{\omega}^r_k=O(h^{2mt}), \quad l_0\leq k \leq r-1.\\
\end{split}
\end{equation*}
\end{lemma}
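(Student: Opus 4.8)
The plan is to run the standard WENO weight estimate directly from the definition \eqref{pesosr1finales}: estimate the size of each $\tilde\alpha^r_k=\tilde C^r_k/(\epsilon+\tilde I^r_k)^t$, show that the normalizing sum $\sum_j\tilde\alpha^r_j$ is governed by the indices $k<l_0$, and then read off $\tilde\omega^r_k$ by taking the quotient in each block. The first step is to split $\{0,\dots,r-1\}$ into the smooth block $k<l_0$ and the affected block $l_0\le k\le r-1$. Since the discontinuity is isolated and lies in $[x_{i-1+l_0},x_{i+l_0}]$, a direct inspection of the stencil $S^r_k=\{x_{i+k-r},\dots,x_{i+k}\}$ shows that it contains that interval exactly when $k\ge l_0$; hence $S^r_k$ is smooth for $k<l_0$ and is crossed by the singularity for $k\ge l_0$.

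For the smooth block I would invoke Theorem \ref{ord_IS}, which gives $\tilde I^r_k=(h^2(p^r_k)''(x_{i-1/2}))^2(1+O(h^2))=O(h^4)$, so that (with $\epsilon$ taken small enough, as in Proposition \ref{prop1}) $(\epsilon+\tilde I^r_k)^t=\Theta(h^{4t})$ and, using the hypothesis $\tilde C^r_k=C^r_k+O(h^s)$, $\tilde\alpha^r_k=(C^r_k+O(h^s))/(\epsilon+\tilde I^r_k)^t=\Theta(h^{-4t})$. For the affected block I would use that the singularity makes $(\epsilon+\tilde I^r_k)^t$ no smaller than order $h^{2t}$ (it is of order $1$ for a jump), so that, combined with the hypothesis $\tilde C^r_k=O(h^{2mt})$, every affected weight satisfies $\tilde\alpha^r_k=O(1)$: indeed for $m=2$ one has $O(h^{4t})$ against an $O(1)$ denominator, and for $m=1$ one has $O(h^{2t})$ against a denominator of order at least $h^{2t}$. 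The key qualitative conclusion is therefore that each $\tilde\alpha^r_k$ with $k\ge l_0$ is $O(1)$, hence negligible beside the $\Theta(h^{-4t})$ contributions of the smooth block, and $\sum_j\tilde\alpha^r_j=\big(\sum_{j<l_0}\tilde\alpha^r_j\big)(1+O(h^{4t}))$.

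It remains to take ratios. For $k\ge l_0$ one gets immediately $\tilde\omega^r_k=\tilde\alpha^r_k/\sum_j\tilde\alpha^r_j=O(1)/\Theta(h^{-4t})=O(h^{4t})\subseteq O(h^{2mt})$, which is the second assertion. For $k<l_0$ the point is that all the smooth indicators agree to high order: Lemma \ref{lemaaux1} gives $\tilde I^r_j-\tilde I^r_k=O(h^{r+3})$ for $j,k<l_0$, whence $(\epsilon+\tilde I^r_j)^t/(\epsilon+\tilde I^r_k)^t=1+O(h^{r-1})$. Factoring the common weight $(\epsilon+\tilde I^r_k)^t$ out of $\sum_{j<l_0}\tilde\alpha^r_j$ and using $\sum_{j<l_0}C^r_j=1$ gives $\sum_{j<l_0}\tilde\alpha^r_j=(\epsilon+\tilde I^r_k)^{-t}(1+O(h^s)+O(h^{r-1}))$, so that $\tilde\omega^r_k=(C^r_k+O(h^s))/(1+O(h^s)+O(h^{r-1}))\cdot(1+O(h^{4t}))=C^r_k+O(h^{\min(s,r-1)})$, which is the claimed $C^r_k+O(h^s)$ in the regime $s\le r-1$ produced by the dyadic construction.

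The main obstacle is the bookkeeping in the normalization rather than any single identity, and the delicate sub-case is the kink $(m=1)$, where the affected weights $\tilde\alpha^r_k$ are only $O(1)$ and do not themselves vanish; there one must genuinely rely on their being dominated by the $\Theta(h^{-4t})$ smooth terms. One also has to track three distinct error sources — the $O(h^s)$ coming from the hypothesis on $\tilde C^r_k$, the $O(h^{r-1})$ relative mismatch of the smooth indicators supplied by Lemma \ref{lemaaux1}, and the $O(h^{4t})$ relative contribution of the affected tail — and verify that, for the range of exponents arising in the construction, they collapse into the single stated bound $O(h^s)$.
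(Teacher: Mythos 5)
Your proof is correct and is essentially the argument the paper intends: the paper's own proof consists of the single observation that $\sum_{k=0}^{l_0-1}C^r_k=1$, which is exactly the normalization step at the heart of your computation, so you have simply written out in full the standard WENO weight estimate that the authors leave implicit. Your closing caveat that the final bound is really $O(h^{\min(s,r-1)})$ (harmless since the lemma is only invoked with $s=r-1$) is a fair and accurate observation that the paper's one-line proof does not address.
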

\begin{proof}
This is a direct consequence of the fact that $\sum_{k=0}^{l_0-1}C^r_k=1$.
\end{proof}

\begin{lemma}\label{lema1}
Let be  $0<l_0\leq r-1$. If there exists a discontinuity at $[x_{i+l_0-1},x_{i+l_0}]$, then for all $l_0+(r-1)\leq l \leq 2r-2 $ the nonlinear weights
defined in Eq. \eqref{pesosr} satisfy:
\begin{equation}
\begin{split}
&\tilde{\omega}^l_{0,0}=1+O(h^{2mt}),\quad  \tilde{\omega}^l_{0,1}=O(h^{2mt})
\end{split}
\end{equation}
With $m=2$ if the discontinuity is in the function and $m=1$ if the discontinuity is in the first derivative.
\end{lemma}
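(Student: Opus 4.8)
The plan is to reduce the statement directly to Proposition \ref{prop1} by identifying exactly which level-$r$ smoothness indicators appear as $\tilde I^l_{0,0}$ and $\tilde I^l_{0,1}$, and then checking whether the stencil of each one straddles the discontinuity. The whole difficulty is index bookkeeping; once the stencils are located, the conclusion is immediate.

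First I would unwind Definition \ref{def1} at the index $k=0$, which gives $\tilde I^l_{0,0}=\tilde I^r_0$ and $\tilde I^l_{0,1}=\tilde I^r_{l-(r-1)}=\tilde I^r_{l-r+1}$. Using the stencil convention $S^r_n=\{x_{i+n-r},\dots,x_{i+n}\}$ from Lemma \ref{lemaaux1}, this means that $\tilde I^l_{0,0}$ is computed on $S^r_0=\{x_{i-r},\dots,x_i\}$ and $\tilde I^l_{0,1}$ on $S^r_{l-r+1}=\{x_{i+l-2r+1},\dots,x_{i+l-r+1}\}$.

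Next I would locate the discontinuity $[x_{i+l_0-1},x_{i+l_0}]$ relative to these two stencils. Since $l_0\ge 1$, the right endpoint of $S^r_0$ is $x_i<x_{i+l_0}$, so $S^r_0$ lies entirely to the left of the jump and $\tilde I^l_{0,0}$ is an unaffected (smooth) indicator. For $S^r_{l-r+1}$, the hypothesis $l\ge l_0+(r-1)$ is equivalent to $l-r+1\ge l_0$, so its right endpoint satisfies $x_{i+l-r+1}\ge x_{i+l_0}$; simultaneously $l\le 2r-2$ forces $l-2r+1\le -1$, while $l_0\ge 1$ gives $l_0-1\ge 0$, so the left endpoint satisfies $x_{i+l-2r+1}\le x_{i+l_0-1}$. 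Hence $S^r_{l-r+1}$ contains nodes on both sides of the jump, and $\tilde I^l_{0,1}$ is affected by the singularity.

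Finally, with $\tilde I^l_{0,0}$ smooth and $\tilde I^l_{0,1}$ affected, I would invoke case 2 of Proposition \ref{prop1} (exactly the case in which $\tilde I^l_{k,k+1}$, here $k=0$, is the indicator hit by the singularity while $\tilde I^l_{k,k}$ is not), which yields at once $\tilde{\omega}^l_{0,0}=1+O(h^{2mt})$ and $\tilde{\omega}^l_{0,1}=O(h^{2mt})$, with $m=2$ for a jump in the function and $m=1$ for a jump in the first derivative. The main obstacle is purely the middle step: verifying that the threshold $l\ge l_0+(r-1)$ is precisely what makes the right endpoint of $S^r_{l-r+1}$ reach the discontinuity while its left endpoint stays on the far side, so that this single stencil is flagged as nonsmooth and the stencil of $\tilde I^l_{0,0}$ is not, placing us in case 2 rather than case 4 of Proposition \ref{prop1}.
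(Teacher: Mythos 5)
Your proposal is correct and follows essentially the same route as the paper's proof: both identify $\tilde I^l_{0,0}=\tilde I^r_0$ as unaffected (since $l_0>0$) and show via the inequality $l-(r-1)-r<0<l_0\leq l-(r-1)$ that the stencil of $\tilde I^l_{0,1}=\tilde I^r_{l-(r-1)}$ straddles the discontinuity, then conclude by case 2 of Proposition \ref{prop1}. The only difference is that you spell out the endpoint bookkeeping slightly more explicitly than the paper does.
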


\begin{proof}
It is clear that as $l_0>0$, then the smoothness indicator $\tilde I^r_0$ is not affected by the discontinuity. However, if $l_0+(r-1)\leq l  \leq 2r-2$, then:
$$ l-(r-1)-r < 0<l_0= l_0+(r-1)-(r-1)\leq l-(r-1). $$
Thus, the discontinuity crosses the stencil used to calculate $\tilde I_{l-(r-1)}^r$ then, by the definition of the smoothness indicators in \eqref{equationindicadores}, we get:
\begin{equation*}
\tilde I_{0,0}^{l}=\tilde I_0^r, \quad \tilde I_{0,1}^{l}=\tilde I_{l-(r-1)}^r,
\end{equation*}
and by Prop. \ref{prop1} the result is obtained, i.e.
 \begin{equation*}
\begin{split}
&\tilde{\omega}^l_{0,0}=1+O(h^{2mt}),\quad  \tilde{\omega}^l_{0,1}=O(h^{2mt}).
\end{split}
\end{equation*}
\end{proof}

\begin{lemma}\label{lema2}
Let be  $0<l_0\leq r-1$. If there exists a discontinuity at $[x_{i+l_0-1},x_{i+l_0}]$, then for all $r+1\leq l \leq l_0+(r-2)$ the nonlinear weights
defined in Eq. \eqref{pesosr} satisfy:
\begin{equation}
\begin{split}
&\tilde{\omega}^l_{k,k}=C^l_{k,k}+O(h^{r-1}),\quad  \tilde{\omega}^l_{k,k+1}=C^l_{k,k+1}+O(h^{r-1}),\quad 0\leq k \leq l_0+(r-2)-l,
\end{split}
\end{equation}
being $C^l_{k,k_1}$ with $k_1=k,\,\,k+1$ defined in Eq. \eqref{pesostodos}.
\end{lemma}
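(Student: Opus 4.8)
The plan is to reduce everything to case~1 of Proposition~\ref{prop1}, which guarantees that whenever neither of the two relevant smoothness indicators is corrupted by the singularity, the corresponding dyadic weights satisfy $\tilde{\omega}^l_{k,k}=C^l_{k,k}+O(h^{r-1})$ and $\tilde{\omega}^l_{k,k+1}=C^l_{k,k+1}+O(h^{r-1})$. Thus the whole task becomes checking that, in the stated range $r+1\leq l\leq l_0+(r-2)$ and $0\leq k\leq l_0+(r-2)-l$, the two indicators attached to the node $(l,k)$ through Definition~\ref{def1} both live on stencils lying strictly to the \emph{left} of the discontinuity at $[x_{i+l_0-1},x_{i+l_0}]$.

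First I would recall from Definition~\ref{def1} that the indicators at this node are $\tilde I^l_{k,k}=\tilde I^r_k$ and $\tilde I^l_{k,k+1}=\tilde I^r_{l-(r-1)+k}$, where $\tilde I^r_n$ is built on the stencil $S^r_n=\{x_{i+n-r},\dots,x_{i+n}\}$. The next step is to pin down exactly which of these stencils feel the jump: the interval $[x_{i+l_0-1},x_{i+l_0}]$ is contained in the span $[x_{i+n-r},x_{i+n}]$ of $S^r_n$ precisely when $n-r\leq l_0-1$ and $l_0\leq n$, that is, when $l_0\leq n\leq l_0+r-1$. Consequently $\tilde I^r_n$ is \emph{unaffected} as soon as $n<l_0$.

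The core of the argument is then a short index estimate. For the first indicator, the constraint $k\leq l_0+(r-2)-l$ together with $l\geq r+1$ gives $k\leq l_0+(r-2)-(r+1)=l_0-3<l_0$, so $\tilde I^r_k$ is clean. For the second, writing $n_1=l-(r-1)+k$ and again using $k\leq l_0+(r-2)-l$, I obtain $n_1\leq l-(r-1)+l_0+(r-2)-l=l_0-1<l_0$; I would also record $n_1\geq (r+1)-(r-1)=2\geq 0$ and $n_1\leq l_0-1\leq r-2$, so the index is legitimate. Hence both indices fall strictly below $l_0$, and neither stencil reaches the discontinuity.

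Having shown that neither $\tilde I^l_{k,k}$ nor $\tilde I^l_{k,k+1}$ is affected, I would apply case~1 of Proposition~\ref{prop1} uniformly across the index range to obtain the claimed $O(h^{r-1})$ estimates, which closes the proof. The one delicate point is precisely the bookkeeping of the previous paragraph: one must verify that the upper window $l\leq l_0+(r-2)$ is exactly what forces $n_1=l-(r-1)+k$ to stay to the left of the jump, so that the smooth-region branch of Proposition~\ref{prop1} genuinely applies for every admissible $(l,k)$ and no node slips into the affected regime.
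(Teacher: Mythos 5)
Your proposal is correct and follows essentially the same route as the paper's own proof: both reduce the claim to case 1 of Proposition \ref{prop1} by the identical index estimates $k\leq l_0-3$ and $l-(r-1)+k\leq l_0-1$, showing that neither stencil from Definition \ref{def1} reaches the discontinuity. The only difference is that you make the containment criterion for the stencil ($l_0\leq n\leq l_0+r-1$) fully explicit, which the paper leaves implicit.
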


\begin{proof}
In order to prove this lemma, we only have to analyze if the discontinuity crosses the stencils used to calculate
$\tilde I^{r}_k$ and $\tilde I^r_{l-(r-1)+k}$. In the first case, the stencil is $\{x_{i-r+k},\dots,x_{i+k}\}$, as
$$k \leq l_0+(r-2)-l \leq l_0+(r-2)-(r+1) = l_0-3,$$
then the discontinuity does not cross it.
In second case the stencil is $\{x_{i+l-(r-1)+k-r},\dots,x_{i+l-(r-1)+k}\}$, from
$$l-(r-1)+k\leq l-(r-1)+l_0+(r-2)-l=l_0-1. $$
Using Prop. \ref{prop1} we get the result.
\end{proof}

\begin{lemma}\label{lema_t}
Let be  $1<l_0\leq r-1$. If there exists a discontinuity at $[x_{i+l_0-1},x_{i+l_0}]$, then
\begin{equation*}
\begin{split}
(\tilde{C}_0^r,\tilde{C}_1^r,\hdots,\tilde{C}_{r-1}^r)=\sum_{j_0=0}^1 \tilde{\omega}^{r-2+l_0}_{0,j_{0}}\left(\sum_{j_{1}=j_{0}}^{j_0+1}\tilde{\omega}_{j_0,j_1}^{r-3+l_0}\left(\dots\left(\sum_{j_{l_0-2}=j_{l_0-3}}^{j_{l_0-3}+1} \tilde{\omega}^{r+1}_{j_{l_0-3},j_{l_0-2}}{\bf  C_{j_{l_0-2}}^{r+1}} \right)\dots\right)\right)+O(h^{2mt})
\end{split}
\end{equation*}
with ${\bf  C_{k}^{r+1}}$, $k=0,\dots,r-1$ defined in Eq. \eqref{nl_op_w_r} and
\begin{equation}
\begin{split}
&\tilde{\omega}^l_{k,k_1}=\frac{\tilde{\alpha}_{k,k_1}^l}{\tilde{\alpha}_{k,k}^l+\tilde{\alpha}_{k,k+1}^l},\quad \quad\tilde{\alpha}_{k,k_1}^l=\frac{C_{k,k_1}^l}{(\epsilon+\tilde I^l_{k,k_1})^t}, \quad k_1=k,\,\,k+1.\\
\end{split}
\end{equation}
where $\tilde I^l_{k,k_1}$ are the smoothness indicators defined in Section \ref{generalsmoothness}, $C_{k,k_1}^l$ defined in Eq. \eqref{pesostodos}, $m=2$ if the discontinuity is in the function and $m=1$ if the discontinuity is in the first derivative.

Also,
\begin{equation}\label{final}
(\tilde\omega_0^r,\tilde\omega_1^r,\hdots,\tilde\omega_{r-1}^r)=(\hat{C}_0^r+O(h^{r-1}),\hat{C}_1^r+O(h^{r-1}),\hdots,\hat{C}_{l_0-1}^r+O(h^{r-1}),O(h^{2mt}),\hdots,O(h^{2mt}))
\end{equation}
being
$$p_0^{l_0+r-1}(x_{i-1/2})=\sum_{k=0}^{l_0-1}\hat{C}_k^r p^r_k(x_{i-1/2})$$
\end{lemma}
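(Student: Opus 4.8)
The plan is to prove the two displayed identities in sequence, first pruning the top of the dyadic tree \eqref{eqsuperimp} and then feeding the resulting structure of $(\tilde C^r_0,\dots,\tilde C^r_{r-1})$ into Lemma \ref{lemmaauxi}. Throughout, I use that the discontinuity sits in $[x_{i+l_0-1},x_{i+l_0}]$ so that the stencils $S^r_k=\{x_{i+k-r},\dots,x_{i+k}\}$ avoid it exactly for $k\le l_0-1$, while $S^r_{l_0},\dots,S^r_{r-1}$ contain it.

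For the first identity I would start from the full expansion \eqref{eqsuperimp} and descend the tree from the root at level $2r-2$. By Lemma \ref{lema1}, at every level $l$ with $l_0+(r-1)\le l\le 2r-2$ the top node satisfies $\tilde\omega^l_{0,0}=1+O(h^{2mt})$ and $\tilde\omega^l_{0,1}=O(h^{2mt})$. Since the left child of a $k=0$ node is again a $k=0$ node, this forces the surviving path to remain on the $k=0$ nodes through all these top levels; each step contributes a factor $1+O(h^{2mt})$ to the surviving branch and discards a branch equal to $O(h^{2mt})$ times an $O(1)$ vector. After processing the $r-l_0$ levels from $2r-2$ down to $l_0+r-1$, the expansion collapses onto the subtree rooted at the $k=0$ node of level $l_0+r-2$; as that subtree is $O(1)$, the accumulated multiplicative and additive perturbations amount to $O(h^{2mt})$, which is precisely the first claimed identity.

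For the second identity I first determine the structure of $(\tilde C^r_0,\dots,\tilde C^r_{r-1})$. The truncated subtree uses only the weights $\tilde\omega^l_{k,k_1}$ with $r+1\le l\le l_0+(r-2)$ and $0\le k\le l_0+(r-2)-l$, which is exactly the index range covered by Lemma \ref{lema2}; hence each of these weights equals $C^l_{k,k_1}+O(h^{r-1})$. Replacing the nonlinear weights by the constants $C^l_{k,k_1}$ turns the truncated sum into the purely constant subtree rooted at the $k=0$ node of level $l_0+r-2$, and by the Aitken telescoping of Lemma \ref{lemaauxiliar1} (the same computation as in \eqref{equationimpo}--\eqref{eqsuperimp0}) this constant subtree reconstructs $p_0^{l_0+r-1}(x_{i-1/2})$ from $p^r_0,\dots,p^r_{l_0-1}$, i.e. it equals $(\hat C^r_0,\dots,\hat C^r_{l_0-1},0,\dots,0)$ with $p_0^{l_0+r-1}(x_{i-1/2})=\sum_{k=0}^{l_0-1}\hat C^r_k p^r_k(x_{i-1/2})$ and $\sum_{k=0}^{l_0-1}\hat C^r_k=1$ (the latter because every node's weights sum to one by Lemma \ref{lemaauxiliar1}). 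Combining this with the first identity, the smooth coordinates $k<l_0$ carry the error $O(h^{r-1})$ coming from Lemma \ref{lema2}, while the coordinates $k\ge l_0$, being outside the support of the truncated subtree, carry only the pruning error $O(h^{2mt})$; that is, $(\tilde C^r_0,\dots,\tilde C^r_{r-1})=(\hat C^r_0+O(h^{r-1}),\dots,\hat C^r_{l_0-1}+O(h^{r-1}),O(h^{2mt}),\dots,O(h^{2mt}))$.

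Finally I would apply Lemma \ref{lemmaauxi} with $C^r_k=\hat C^r_k$ and $s=r-1$: its hypotheses $\sum_{k=0}^{l_0-1}\hat C^r_k=1$ and the displayed form of $(\tilde C^r_k)$ hold, and $\tilde I^r_{l_0}$ is affected by the discontinuity since $S^r_{l_0}$ contains the singular interval, so it yields at once $\tilde\omega^r_k=\hat C^r_k+O(h^{r-1})$ for $0\le k<l_0$ and $\tilde\omega^r_k=O(h^{2mt})$ for $l_0\le k\le r-1$, which is \eqref{final}. The main obstacle I anticipate is the bookkeeping in the collapse step: one must check that staying on the $k=0$ nodes keeps the indices within the hypotheses of Lemmas \ref{lema1} and \ref{lema2}, and that the two distinct error scales $O(h^{r-1})$ and $O(h^{2mt})$ land on the correct blocks of coordinates (the $l_0$ smooth entries versus the $r-l_0$ suppressed entries) without mixing.
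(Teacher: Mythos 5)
Your proposal is correct and follows essentially the same route as the paper's proof: Lemma \ref{lema1} to prune the levels $l_0+(r-1)\le l\le 2r-2$ down to the $k=0$ subtree with an $O(h^{2mt})$ error, the constant Aitken tree of Lemma \ref{lemaauxiliar1} to define the $\hat{C}^r_k$ via $p_0^{l_0+r-1}$, and then Lemmas \ref{lema2} and \ref{lemmaauxi} to obtain \eqref{final}. Your write-up is in fact more careful than the paper's on the index bookkeeping and on why $\sum_{k=0}^{l_0-1}\hat{C}^r_k=1$, but the argument is the same.
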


\begin{proof}
We suppose that $1<l_0\leq r-1$, we denote as:
\begin{equation}
\begin{split}
p^{l_0+r-1}_0(x_{i-1/2})&=\sum_{j_0=0}^1C^{l_0+r-2}_{0,j_0}p_{j_0}^{l_0+r-2}(x_{i-1/2}) \\
&=\sum_{j_0=0}^1C^{l_0+r-2}_{0,j_0}\left(\sum_{j_1=j_0}^{j_0+1} C^{l_0+r-3}_{j_0,j_1} p_{j_1}^{l_0+r-3}(x_{i-1/2})\right)\\
&=\sum_{j_0=0}^1 C^{l_0+r-2}_{0,j_0}\left(\sum_{j_1=j_0}^{j_0+1}C_{j_0,j_1}^{l_0+r-3}
\left(\dots\left(\sum_{j_{l_0-2}=j_{l_0-3}}^{j_{l_0-3}+1} C^{r+1}_{j_{l_0-3},j_{l_0-2}}\left(\sum_{j_{l_0-1}=j_{l_0-2}}^{j_{l_0-2}+1} C^{r}_{j_{l_0-2},j_{l_0-1}}p^r_{j_{l_0-1}}(x_{i-1/2})\right)\right)\dots\right)\right)\\
&=\sum_{k=0}^{l_0-1}\hat{C}_k^r p_k^r(x_{i-1/2}).
\end{split}
\end{equation}
Then, by Eq. \eqref{eqsuperimp} and Lemma \ref{lema1} we have that:
\begin{equation*}
\begin{split}
(\tilde{C}_0^r,\tilde{C}_1^r,\hdots,\tilde{C}_{r-1}^r)=\sum_{j_0=0}^1 \tilde{\omega}^{r-2+l_0}_{0,j_{0}}\left(\sum_{j_{1}=j_{0}}^{j_0+1}\tilde{\omega}_{j_0,j_1}^{r-3+l_0}\left(\dots\left(\sum_{j_{l_0-2}=j_{l_0-3}}^{j_{l_0-3}+1} \tilde{\omega}^{r+1}_{j_{l_0-3},j_{l_0-2}}{\bf  C_{j_{l_0-2}}^{r+1}} \right)\dots\right)\right)+O(h^{2mt})
\end{split}
\end{equation*}
And by Lemmas  \ref{lemmaauxi} and \ref{lema2}, we obtain:
\begin{equation*}
(\tilde \omega_0^r,\tilde \omega_1^r,\hdots,\tilde \omega_{r-1}^r)=(\hat{C}_0^r+O(h^{r-1}),\hat{C}_1^r+O(h^{r-1}),\hdots,\hat{C}_{l_0-1}^r+O(h^{r-1}),O(h^{2mt}),\hdots,O(h^{2mt})).
\end{equation*}
\end{proof}

\begin{theorem}\label{teo1}
Let be $1< l_0 \leq r-1$ and $\tilde \omega_k^r$ defined in Eq. \eqref{pesosr1finales},  if $f$ is smooth in $[x_{i-r},x_{i+r-1}]\setminus \Omega$ and $f$ has a discontinuity at $\Omega$ then
\begin{equation}
\sum_{k=0}^{r-1}\tilde \omega^r_k p^r_k(x_{i-\frac12})-{f}(x_{i-\frac12})=\left\{
                                                  \begin{array}{ll}
                                                    O(h^{2r}), & \hbox{if $\,\,\Omega=\emptyset$;} \\
O(h^{r+l_0}), & \hbox{if   $\,\,\Omega=[x_{i+l_0-1},x_{i+l_0}]$; }\\
                                                  \end{array}
                                                \right.
\end{equation}
\end{theorem}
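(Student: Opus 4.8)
The plan follows the three-step scheme announced at the start of Section~\ref{analisisaccuracy}: pin down the size of the nonlinear optimal weights (already done in Section~\ref{section2} and Lemma~\ref{lema_t}), show that the weighted combination reproduces the largest discontinuity-free interpolant up to a controlled perturbation, and finally read off the accuracy of that interpolant. The recurring device is to split $\tilde\omega_k^r$ into a leading part that \emph{exactly} reconstructs a high-degree polynomial plus a perturbation, and then to tame the perturbation with two facts: subtracting $f(x_{i-\frac12})$ from a smooth-stencil value $p_k^r(x_{i-\frac12})$ gains a factor $O(h^{r+1})$, and all weights as well as all reconstruction coefficients sum to one.

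First I would treat $\Omega=\emptyset$. The first bullet of Section~\ref{section2} gives $\tilde\omega_k^r=\bar C_k^r+e_k$ with $e_k=O(h^{r-1})$ and $\sum_k e_k=0$. Using $\sum_{k}\bar C_k^r p_k^r(x_{i-\frac12})=p_0^{2r-1}(x_{i-\frac12})$ from~\eqref{suma_pond} together with $\sum_k e_k\,f(x_{i-\frac12})=0$,
\[
\sum_{k=0}^{r-1}\tilde\omega_k^r p_k^r(x_{i-\frac12})-f(x_{i-\frac12})=\bigl(p_0^{2r-1}(x_{i-\frac12})-f(x_{i-\frac12})\bigr)+\sum_{k=0}^{r-1}e_k\bigl(p_k^r(x_{i-\frac12})-f(x_{i-\frac12})\bigr).
\]
The first bracket is $O(h^{2r})$ since $p_0^{2r-1}$ is the smooth interpolant of degree $2r-1$, and each remaining term is $O(h^{r-1})\cdot O(h^{r+1})=O(h^{2r})$, so the total is $O(h^{2r})$.

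For $\Omega=[x_{i+l_0-1},x_{i+l_0}]$ with $1<l_0\le r-1$, Lemma~\ref{lema_t} is the crux: it gives $\tilde\omega_k^r=\hat C_k^r+O(h^{r-1})$ for $0\le k<l_0$ and $\tilde\omega_k^r=O(h^{2mt})$ for $l_0\le k\le r-1$, where the $\hat C_k^r$ are exactly the coefficients reconstructing the largest discontinuity-free interpolant, $\sum_{k=0}^{l_0-1}\hat C_k^r p_k^r(x_{i-\frac12})=p_0^{\,l_0+r-1}(x_{i-\frac12})$, with $\sum_{k=0}^{l_0-1}\hat C_k^r=1$. Its stencil $\{x_{i-r},\dots,x_{i+l_0-1}\}$ stops just to the left of $\Omega$, so $p_0^{\,l_0+r-1}$ is a genuine interpolant of degree $l_0+r-1$ and $p_0^{\,l_0+r-1}(x_{i-\frac12})-f(x_{i-\frac12})=O(h^{\,l_0+r})$; this supplies the target base accuracy. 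Writing $e_k=\tilde\omega_k^r-\hat C_k^r=O(h^{r-1})$ for $k<l_0$ and $d_k=\tilde\omega_k^r=O(h^{2mt})$ for $k\ge l_0$, the two sum-to-one identities force $\sum_{k<l_0}e_k=-\sum_{k\ge l_0}d_k=O(h^{2mt})$.

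Collecting the pieces, $\sum_k\tilde\omega_k^r p_k^r(x_{i-\frac12})-f(x_{i-\frac12})$ equals the reconstructed error $p_0^{\,l_0+r-1}(x_{i-\frac12})-f(x_{i-\frac12})=O(h^{\,l_0+r})$, plus $\sum_{k<l_0}e_k\bigl(p_k^r(x_{i-\frac12})-f(x_{i-\frac12})\bigr)=O(h^{r-1})O(h^{r+1})=O(h^{2r})$ from the smooth stencils, plus $\sum_{k\ge l_0}d_k\bigl(p_k^r(x_{i-\frac12})-f(x_{i-\frac12})\bigr)=O(h^{2mt})O(1)=O(h^{2mt})$ from the singular stencils, plus the constant part $f(x_{i-\frac12})\bigl(\sum_{k<l_0}e_k+\sum_{k\ge l_0}d_k\bigr)=0$. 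The result is $O(h^{\,l_0+r})+O(h^{2mt})$, which is $O(h^{\,r+l_0})$ provided $t$ is large enough that $2mt\ge r+l_0$ (achievable since $l_0\le r-1$). The main obstacle is precisely this cancellation: the naive bound $\sum_{k<l_0}e_k p_k^r=O(h^{r-1})$ is hopelessly weak, and recovering the optimal $O(h^{r+l_0})$ demands both the $O(h^{r+1})$ interpolation gain on the smooth stencils and the cancellation of the weight perturbations forced by the partition of unity; Lemma~\ref{lema_t} is what guarantees that the $O(1)$ part of the weights reproduces exactly $p_0^{\,l_0+r-1}$ and not a lower-degree interpolant, which is the whole point of the refined smoothness indicators.
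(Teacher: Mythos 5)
Your proof is correct and follows essentially the same route as the paper: you add and subtract $p_0^{\,l_0+r-1}(x_{i-\frac12})$, invoke Lemma \ref{lema_t} for the sizes of $\tilde\omega_k^r-\hat C_k^r$, and split the remainder into smooth-stencil terms of size $O(h^{r-1})O(h^{r+1})$ and singular-stencil terms of size $O(h^{2mt})$. Your write-up is in fact a bit more explicit than the paper's on two points the paper leaves implicit --- the partition-of-unity cancellation $f(x_{i-\frac12})\bigl(\sum_{k<l_0}e_k+\sum_{k\ge l_0}d_k\bigr)=0$ and the requirement $2mt\ge r+l_0$ (settled later in Corollary \ref{cor_t}) --- but the argument is the same.
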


\begin{proof}
Let be $1 < l_0 \leq r-1$, then
\begin{equation*}
\begin{split}
\sum_{k=0}^{r-1}\tilde \omega^r_k p^r_k(x_{i-\frac12})-{f}(x_{i-\frac12})&= \sum_{k=0}^{r-1}\tilde \omega^r_k p^r_k(x_{i-\frac12})-p_0^{l_0+r-1}(x_{i-1/2})+p_0^{l_0+r-1}(x_{i-1/2})-f(x_{i-\frac12})\\
&= \sum_{k=0}^{r-1}(\tilde \omega^r_k-\hat{C}^r_k) p^r_k(x_{i-\frac12})+O(h^{r+l_0})\\
&= \sum_{k=0}^{l_0-1}(\tilde \omega^r_k-\hat{C}^r_k) (p^r_k(x_{i-\frac12})-f(x_{i-1/2}))+ \sum_{k=l_0}^{r-1}\tilde \omega^r_k p^r_k(x_{i-\frac12})+O(h^{r+l_0})\\
&=O(h^{r-1+r+1})+O(h^{2mt})+O(h^{r+l_0})\\
&=O(h^{r+l_0}).
\end{split}
\end{equation*}

\end{proof}

Finally, if $l_0=1$, then we obtain the WENO classic interpolation and the order in the interval $[x_{i},x_{i+1}]$ is $O(h^{r+1})$.

We determine the value of the parameter $t$ in (\ref{pesos}). The polynomial obtained with the new WENO technique must satisfy the following properties:
\begin{itemize}
\item It is a piecewise interpolation polynomial composed of polynomials of degree $r$.
\item Every polynomial must satisfy the following property, that is equivalent to the {\it ENO property} \cite{WENO_nuevo}, but that assures a progressive order of accuracy:
\begin{itemize}
\item The classical WENO weight related to any smooth stencil will verify $$\tilde{\omega}_{k}^r=O(1).$$
\item If the function $f$ has a singularity, then the corresponding $\tilde{\omega}_{k}^r$ will verify
\begin{equation*}
{\tilde \omega}_{k}^r= O\left(h^{r-1}\right).
\end{equation*}
\end{itemize}
\end{itemize}

\begin{corollary}\label{cor_t}
The new WENO algorithm satisfies the previous property if $t\ge r$ for jumps in the function and the first derivative.
\end{corollary}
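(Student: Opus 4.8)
The plan is to read the corollary off from two results already established: the Proposition bounding the nonlinear weights $\tilde\omega^r_k$ defined in \eqref{pesosr1finales}, and the accuracy estimate of Theorem \ref{teo1}; the value $t\ge r$ will then come out of an optimisation over the position and the type of the singularity. First I would dispose of the smooth half of the property. By the Proposition, whenever $f$ is smooth on the stencil $S_k^r$ one has $\tilde\omega_k^r=O(1)$ for every $t\ge 1$, so the requirement that smooth stencils carry $O(1)$ weight imposes no constraint on $t$ at all.

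The content is therefore in the singular half. By the Proposition, if $S_k^r$ is crossed by a singularity then $\tilde\omega_k^r=O(h^{2mt})$, with $m=2$ for a jump in the function and $m=1$ for a jump in the first derivative. To convert this into the progressive-order requirement I would follow the error bound in the proof of Theorem \ref{teo1}: when the discontinuity lies in $[x_{i+l_0-1},x_{i+l_0}]$, the contribution of the singular sub-stencils to $\sum_k\tilde\omega_k^r p_k^r(x_{i-1/2})-f(x_{i-1/2})$ is precisely the $O(h^{2mt})$ term, while the surviving terms are $O(h^{2r})$ and $O(h^{r+l_0})$. Consequently the optimal rate $O(h^{r+l_0})$ is recovered exactly when $2mt\ge r+l_0$, and this must hold for every admissible location of the singularity.

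The final step is the optimisation. The inequality $2mt\ge r+l_0$ is demanded for all $l_0$ with $1\le l_0\le r-1$ and for both values of $m$, so the binding instance is the one maximising $r+l_0$ and minimising $m$, namely a first-derivative jump ($m=1$) sitting in the farthest interval $l_0=r-1$. This yields $2t\ge 2r-1$, that is $t\ge r-\tfrac12$, and since $t$ is an integer we conclude $t\ge r$. I would then verify sharpness by observing that $t=r-1$ gives $2mt=2r-2<2r-1$, which fails in exactly this worst case, so $r$ is the true threshold and not an overestimate. For a jump in the function ($m=2$) the condition reduces to $4t\ge 2r-1$, which is automatically met as soon as $t\ge r$; hence the single prescription $t\ge r$ simultaneously handles both types of singularity and the corollary follows.

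I expect the only delicate point to be the bookkeeping of this optimisation. One must be confident that $m=1$ is genuinely the harder case (a kink is more awkward to suppress than a jump, since it produces a less singular smoothness indicator, reflected in the smaller exponent $2mt$) and that the outermost interval $l_0=r-1$ is actually attainable within the stencil; after that, the integer rounding of $(2r-1)/2$ pins the bound down to exactly $r$, which is what makes the clean statement $t\ge r$ correct rather than merely sufficient.
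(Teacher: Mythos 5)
Your argument is correct and follows essentially the same route as the paper: the paper's proof also reads the $O(h^{2mt})$ decay of the singular weights off equation \eqref{final} of Lemma \ref{lema_t}, imposes $2mt\ge 2r-1$ so that these terms do not degrade the $O(h^{r+l_0})$ accuracy in the worst case $l_0=r-1$, and observes that $t=r$ satisfies this for both $m=1$ and $m=2$, with a weaker requirement sufficing for jumps in the function. Your added sharpness check at $t=r-1$ and the explicit identification of $m=1$, $l_0=r-1$ as the binding case are consistent with, and slightly more detailed than, the paper's one-line justification.
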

\begin{proof}
The proof is straightforward from equation (\ref{final}) in Lemma \ref{lema_t}. We can see that assuring that $2mt\ge2r-1$ is enough. If $t=r$ the previous inequality is satisfied for jumps in the function or the first derivative. In fact, for jumps in the function it is enough if $t=ceil\left(\frac{2r-1}{2}\right)$, being {\it ceil} the operation of rounding to the upper closest integer.
\end{proof}


%

\section{Numerical experiments}\label{numexp}


This section is dedicated to present some numerical experiments aimed to test the theoretical results obtained in previous sections. We will show results related to the order of accuracy and the computational time of WENO-6, WENO-8 and WENO-10 algorithms.

The accuracy will be checked through a grid refinement analysis. We will consider the function,

\begin{equation}\label{experimento1}
f(x)=\left\{\begin{array}{ll}
x^{10}-x^9+x^8-4x^7+x^6+x^5+x^4+x^3+5x^2+3x, &a\le x<0,\\
\eta-(x^{10}-2x^9+3x^8-8x^7-2x^6+x^5-2x^4-3x^3-5x^2+0.5x),&0\le x<b,
\end{array}
\right.
\end{equation}
where we will give $\eta$ the values $\eta=0, 1$. In the first case the function presents a discontinuity in the first derivative and in the second case a discontinuity in the function. If we set $\eta=0$, then we will consider the interval $(a, b)=(-\frac{\pi}{6},1-\frac{\pi}{6})$. The reason is that we want to assure, if possible, that at all the stages of the grid refinement analysis, the singularity does not fall at a grid point. If the discontinuity does fall at a grid point, then the classical WENO strategy (or the new one) always provide an approximation of order $O(h^{r+1})$, as there is always one smooth stencil, and the grid refinement analysis does not show the real accuracy of the algorithms. If $\eta=1$, then we will just consider the interval $(a, b)=(-0.5, 0.5)$. For all the experiments we have chosen $t=r$ and $\epsilon=10^{-16}$ in (\ref{pesos}) and (\ref{pesosr}).

Tables \ref{tabla1} and \ref{tabla2} present a grid refinement analysis for the new WENO-6 and the classical WENO-6. Tables \ref{tabla3} and \ref{tabla4} present the same refinement analysis for the WENO-8 algorithms. Tables \ref{tabla5} and \ref{tabla6} present the results for the WENO-10 algorithms. All the previous tables have been obtained for the function in (\ref{experimento1}) with $\eta=0$, that presents a jump in the first derivative.

Tables \ref{tabla7} and \ref{tabla8} (WENO-6), \ref{tabla9} and \ref{tabla10} (WENO-8), \ref{tabla11} and \ref{tabla12} (WENO-10), show the same analysis but, in this case, for $\eta=1$, so (\ref{experimento1})  presents a jump in the function.

In all the aforementioned tables we use $2^i$ initial points. The errors $e_i$ are presented for interpolated data at gridpoints around the discontinuity. The interval that contains the discontinuity is always denoted as $x_{2i}$. It is clear that the accuracy is reduced step by step using the new algorithm. Classical WENO-2r algorithm is not capable of this adaption and tipically attains $O(h^{r+1})$ accuracy at the interpolations which stencils cross the discontinuity. Tables \ref{tabla3},  \ref{tabla4}, \ref{tabla5} \ref{tabla6}, \ref{tabla9}, \ref{tabla10}, \ref{tabla11} and \ref{tabla12} only present complete results to the left of the discontinuity as the accuracy is symmetric. In order to obtain the computational time, we perform $500$ executions of each subroutine and obtain the mean. The computational times obtained show that the cost is similar for both algorithms for small values of $r$, but that it grows for the new algorithm as $r$ grows.

\begin{table}[!ht]
\begin{center}
\resizebox{18cm}{!} {
\begin{tabular}{|c|c|c|c|c|c|c|c|c|c|c|c|c|c|c|c|}
\hline\multicolumn{1}{|c|}{ }&\multicolumn{2}{|c|}{$\cdots x_{2i-7}$} &\multicolumn{2}{|c|}{$x_{2i-5}$} & \multicolumn{2}{|c|}{$x_{2i-3}$} & \multicolumn{2}{|c|}{$x_{2i-1}$} & \multicolumn{2}{|c|}{$x_{2i+1}$}  & \multicolumn{2}{|c|}{$x_{2i+3}$}& \multicolumn{2}{|c|}{$x_{2i+5}\cdots$}&\multirow{ 2}{*}{Comp. t.}
              \\
\cline{1-15} $i$ &$e_i$ & $\log_2\left(\frac{e_i}{e_{i+1}}\right)$ &$e_i$ & $\log_2\left(\frac{e_i}{e_{i+1}}\right)$ & $e_i$ & $\log_2\left(\frac{e_i}{e_{i+1}}\right)$ & $e_i$ & $\log_2\left(\frac{e_i}{e_{i+1}}\right)$ & $e_i$ & $\log_2\left(\frac{e_i}{e_{i+1}}\right)$ & $e_i$ & $\log_2\left(\frac{e_i}{e_{i+1}}\right)$ & $e_i$ & $\log_2\left(\frac{e_i}{e_{i+1}}\right)$&
              \\
\hline 5&1.575e-08 & - & 7.772e-09 & - &  5.103e-06 & - &  8.277e-03  & - &  1.690e-06  & - &  3.041e-08  & - & 2.846e-08 & -&  4.214e-04
            \\
\hline 6&1.477e-10 & 6.737 & 8.152e-10 & 3.253 &  4.968e-08 & 6.683 &  9.953e-03  & -0.266 &  1.065e-07  & 3.988 &  2.929e-10  & 6.698 & 3.453e-10 & 6.365&  6.374e-04
            \\
\hline 7&1.731e-12 & 6.414 & 3.586e-11 & 4.507 &  3.319e-09 & 3.904 &  1.764e-04  & 5.818 &  1.697e-05  & -7.316 &  1.847e-08  & -5.979 & 4.733e-12 & 6.189&  1.103e-03
            \\
\hline 8&2.420e-14 & 6.160 & 1.205e-12 & 4.895 &  2.127e-10 & 3.964 &  1.764e-04  & -0.000 &  1.694e-08  & 9.969 &  1.168e-12  & 13.949 & 6.724e-14 & 6.137&   1.720e-03
            \\
\hline 9&3.539e-16 & 6.096 & 3.884e-14 & 4.955 &  1.346e-11 & 3.982 &  1.764e-04  & 0.000 &  2.981e-11  & 9.150 &  3.656e-14  & 4.997 & 1.000e-15 & 6.071&  2.788e-03
\\
\hline 10&5.204e-18 & {\bf 6.087} & 1.232e-15 & {\bf4.979} &  8.466e-13 & {\bf3.991} &  1.761e-04  & 0.003 &  1.697e-12  & {\bf4.135} &  1.198e-15  & {\bf4.931} & 1.540e-17 & {\bf 6.021}& 4.401e-03
\\
\hline
\end{tabular}
}
\caption{Grid refinement analysis for the new WENO-6 algorithm for the function in (\ref{experimento1}) and $\eta=0$. }\label{tabla1}
\end{center}
\end{table}

\begin{table}[!ht]
\begin{center}
\resizebox{18cm}{!} {
\begin{tabular}{|c|c|c|c|c|c|c|c|c|c|c|c|c|c|c|c|c|}
\hline\multicolumn{1}{|c|}{ }&\multicolumn{2}{|c|}{$\cdots x_{2i-7}$} &\multicolumn{2}{|c|}{$x_{2i-5}$} & \multicolumn{2}{|c|}{$x_{2i-3}$} & \multicolumn{2}{|c|}{$x_{2i-1}$} & \multicolumn{2}{|c|}{$x_{2i+1}$}  & \multicolumn{2}{|c|}{$x_{2i+3}$}& \multicolumn{2}{|c|}{$x_{2i+5}\cdots$}&\multirow{ 2}{*}{Comp. t.}
              \\
\cline{1-15} $i$ &$e_i$ & $\log_2\left(\frac{e_i}{e_{i+1}}\right)$ &$e_i$ & $\log_2\left(\frac{e_i}{e_{i+1}}\right)$ & $e_i$ & $\log_2\left(\frac{e_i}{e_{i+1}}\right)$ & $e_i$ & $\log_2\left(\frac{e_i}{e_{i+1}}\right)$ & $e_i$ & $\log_2\left(\frac{e_i}{e_{i+1}}\right)$ & $e_i$ & $\log_2\left(\frac{e_i}{e_{i+1}}\right)$ & $e_i$ & $\log_2\left(\frac{e_i}{e_{i+1}}\right)$&
              \\
\hline 5& 1.574e-08 & - & 2.815e-07 & - &  9.477e-06 & - &  8.190e-03  & - &  1.712e-06  & - &  3.683e-07  & - & 2.851e-08 & -&  5.273e-04
            \\
\hline 6&1.477e-10 & 6.736 & 1.170e-08 & 4.589 &  5.403e-08 & 7.454 &  9.953e-03  & -0.281 &  1.102e-07  & 3.958 &  2.454e-08  & 3.908 & 3.453e-10 & 6.367&  5.239e-04
            \\
\hline 7&1.731e-12 & 6.414 & 7.822e-10 & 3.903 &  3.320e-09 & 4.025 &  1.764e-04  & 5.818 &  2.292e-05  & -7.700 &  3.058e-07  & -3.639 & 4.732e-12 & 6.189&  1.021e-03
            \\
\hline 8&2.420e-14 & 6.160 & 4.963e-11 & 3.978 &  2.127e-10 & 3.964 &  1.764e-04  & -0.000 &  3.343e-08  & 9.421 &  8.320e-10  & 8.522 & 6.724e-14 & 6.137&  1.939e-03
            \\
\hline 9&3.539e-16 & 6.096 & 3.124e-12 & 3.990 &  1.346e-11 & 3.982 &  1.764e-04  & 0.000 &  3.263e-11  & 10.001 &  6.540e-12  & 6.991 & 1.000e-15 & 6.071&  2.191e-03
\\
\hline 10&5.204e-18 & {\bf 6.087} & 1.959e-13 & {\bf 3.995} &  8.466e-13 &{\bf 3.991} &  1.757e-04  & 0.005 &  1.698e-12  & {\bf 4.264} &  3.921e-13  & {\bf 4.060} & 1.518e-17 & {\bf 6.042}&  3.821e-03
\\
\hline
\end{tabular}
}
\caption{Grid refinement analysis for the classical WENO-6 algorithm for the function in (\ref{experimento1}) and $\eta=0$.}\label{tabla2} 
\end{center}
\end{table}

\begin{table}[!ht]
\begin{center}
\resizebox{18cm}{!} {
\begin{tabular}{|c|c|c|c|c|c|c|c|c|c|c|c|c|c|c|c|c|c|c|c|}
\hline\multicolumn{1}{|c|}{ }&\multicolumn{2}{|c|}{$\cdots x_{2i-9}$} &\multicolumn{2}{|c|}{$x_{2i-7}$} & \multicolumn{2}{|c|}{$x_{2i-5}$} & \multicolumn{2}{|c|}{$x_{2i-3}$} & \multicolumn{2}{|c|}{$x_{2i-1}$}  & \multicolumn{2}{|c|}{$x_{2i+1}$}& \multicolumn{2}{|c|}{$x_{2i+3}\cdots$}&\multirow{ 2}{*}{Comp. t.}
              \\
\cline{1-15} $i$ &$e_i$ & $\log_2\left(\frac{e_i}{e_{i+1}}\right)$ &$e_i$ & $\log_2\left(\frac{e_i}{e_{i+1}}\right)$ & $e_i$ & $\log_2\left(\frac{e_i}{e_{i+1}}\right)$ & $e_i$ & $\log_2\left(\frac{e_i}{e_{i+1}}\right)$ & $e_i$ & $\log_2\left(\frac{e_i}{e_{i+1}}\right)$ & $e_i$ & $\log_2\left(\frac{e_i}{e_{i+1}}\right)$ & $e_i$ & $\log_2\left(\frac{e_i}{e_{i+1}}\right)$&
              \\
\hline 5&3.333e-11 & - & 1.963e-09 & - &  1.881e-08 & - &  2.532e-08  & - &  7.269e-03  & - &  3.938e-08  & - & 2.776e-08 & -& 7.601e-04  
            \\
\hline 6&2.557e-13 & 7.026 & 1.275e-11 & 7.267 &  1.663e-10 & 6.821 &  2.061e-09  & 3.619 &  9.342e-03  & -0.362 &  1.133e-09  & 5.119 & 3.104e-10 & 6.483&7.292e-04 
            \\
\hline 7&6.106e-16 & 8.710 & 9.177e-14 & 7.118 &  1.713e-12 & 6.601 &  8.588e-11  & 4.585 &  1.543e-04  & 5.920 &  1.717e-06  & -10.565 & 1.735e-08 & -5.805& 1.508e-03 
            \\
\hline 8&6.939e-18 & {\bf 6.459} & 6.939e-16 & 7.047 &  2.212e-14 & 6.275 &  2.846e-12  & 4.916 &  1.543e-04  & -0.000 &  7.363e-11  & 14.509 & 4.880e-14 & 18.440& 2.334e-03   
            \\
\hline 9&0& -& 5.204e-18 & {\bf 7.059} &  3.105e-16 & 6.155 &  9.115e-14  & 4.964 &  1.543e-04  & 0.000 &  8.792e-14  & 9.710 & 6.436e-16 & 6.245& 3.927e-03 
\\
\hline 10&1.735e-18 &-& 0& -&  4.337e-18 & {\bf 6.162} &  2.880e-15  & {\bf 4.984} &  1.539e-04  & 0.004 &  2.823e-15  & {\bf 4.961} & 9.324e-18 & {\bf 6.109}& 5.832e-03
\\
\hline
\end{tabular}
}
\caption{Grid refinement analysis for the new WENO-8 algorithm for the function in (\ref{experimento1}) and $\eta=0$.}\label{tabla3} 
\end{center}
\end{table}

\begin{table}[!ht]
\begin{center}
\resizebox{18cm}{!} {
\begin{tabular}{|c|c|c|c|c|c|c|c|c|c|c|c|c|c|c|c|c|c|c|c|c|c|}
\hline\multicolumn{1}{|c|}{ }&\multicolumn{2}{|c|}{$\cdots x_{2i-9}$} &\multicolumn{2}{|c|}{$x_{2i-7}$} & \multicolumn{2}{|c|}{$x_{2i-5}$} & \multicolumn{2}{|c|}{$x_{2i-3}$} & \multicolumn{2}{|c|}{$x_{2i-1}$}  & \multicolumn{2}{|c|}{$x_{2i+1}$}& \multicolumn{2}{|c|}{$x_{2i+3}\cdots$}&\multirow{ 2}{*}{Comp. t.}
              \\
\cline{1-15} $i$ &$e_i$ & $\log_2\left(\frac{e_i}{e_{i+1}}\right)$ &$e_i$ & $\log_2\left(\frac{e_i}{e_{i+1}}\right)$ & $e_i$ & $\log_2\left(\frac{e_i}{e_{i+1}}\right)$ & $e_i$ & $\log_2\left(\frac{e_i}{e_{i+1}}\right)$ & $e_i$ & $\log_2\left(\frac{e_i}{e_{i+1}}\right)$ & $e_i$ & $\log_2\left(\frac{e_i}{e_{i+1}}\right)$ & $e_i$ & $\log_2\left(\frac{e_i}{e_{i+1}}\right)$&
              \\
\hline 5&4.966e-11 & - & 2.203e-09 & - &  4.532e-08 & - &  1.464e-07  & - &  7.246e-03  & - &  3.933e-08  & - & 2.134e-09 & -&  7.055e-04 
            \\
\hline 6&2.583e-13 & 7.587 & 1.554e-10 & 3.825 &  5.618e-10 & 6.334 &  2.064e-09  & 6.148 &  9.341e-03  & -0.366 &  1.136e-09  & 5.113 & 3.685e-10 & 2.533& 5.396e-04 
            \\
\hline 7&6.106e-16 & {\bf 8.724} & 5.924e-12 & 4.713 &  2.193e-11 & 4.679 &  8.588e-11  & 4.587 &  1.543e-04  & 5.919 &  4.457e-06  & -11.938 & 4.902e-07 & -10.377& 1.110e-03  
            \\
\hline 8&0& -& 1.924e-13 & 4.944 &  7.171e-13 & 4.934 &  2.846e-12  & 4.916 &  1.543e-04  & -0.000 &  2.158e-10  & 14.334 & 1.420e-10 & 11.753& 1.684e-03 
            \\
\hline 9&0& -& 6.113e-15 & 4.976 &  2.287e-14 & 4.971 &  9.115e-14  & 4.964 &  1.543e-04  & 0.000 &  8.977e-14  & 11.231 & 2.510e-14 & 12.466& 2.795e-03   
\\
\hline 10&1.735e-18 &-& 1.926e-16 & {\bf 4.989} &  7.225e-16 &{\bf 4.984} &  2.880e-15  &{\bf 4.984} &  1.530e-04  & 0.012 &  2.823e-15  &{\bf 4.991} & 7.082e-16 &{\bf 5.148}&4.822e-03
\\
\hline
\end{tabular}
}
\caption{Grid refinement analysis for the classical WENO-8 algorithm for the function in (\ref{experimento1}) and $\eta=0$.}\label{tabla4} 
\end{center}
\end{table}

\begin{table}[!ht]
\begin{center}
\resizebox{18cm}{!} {
\begin{tabular}{|c|c|c|c|c|c|c|c|c|c|c|c|c|c|c|c|c|c|c|c|c|c|c|c|c|c|c|}
\hline\multicolumn{1}{|c|}{ }&\multicolumn{2}{|c|}{$\cdots x_{2i-11}$} &\multicolumn{2}{|c|}{$x_{2i-9}$} & \multicolumn{2}{|c|}{$x_{2i-7}$} & \multicolumn{2}{|c|}{$x_{2i-5}$} & \multicolumn{2}{|c|}{$x_{2i-3}$}  & \multicolumn{2}{|c|}{$x_{2i-1}$}& \multicolumn{2}{|c|}{$x_{2i}\cdots$}&\multirow{ 2}{*}{Comp. t.}
              \\
\cline{1-15} $i$ &$e_i$ & $\log_2\left(\frac{e_i}{e_{i+1}}\right)$ &$e_i$ & $\log_2\left(\frac{e_i}{e_{i+1}}\right)$ & $e_i$ & $\log_2\left(\frac{e_i}{e_{i+1}}\right)$ & $e_i$ & $\log_2\left(\frac{e_i}{e_{i+1}}\right)$ & $e_i$ & $\log_2\left(\frac{e_i}{e_{i+1}}\right)$ & $e_i$ & $\log_2\left(\frac{e_i}{e_{i+1}}\right)$ & $e_i$ & $\log_2\left(\frac{e_i}{e_{i+1}}\right)$&
              \\
\hline 5&1.762e-11 & - & 1.201e-11 & - &  2.408e-11 & - &  1.603e-10  & - &  3.584e-09  & - &  5.429e-08  & - & 7.261e-03 & -& 6.430e-04 
            \\
\hline 6&3.969e-15 & {\bf12.116} & 3.136e-15 & {\bf11.903} &  2.054e-14 & 10.195 &  3.414e-13  & 8.875 &  2.284e-11  & 7.294 &  4.842e-10  & 6.809 & 9.331e-03 & -0.362& 9.058e-04    
            \\
\hline 7&2.776e-17 & 7.160 & 1.388e-17 & 7.820 &  1.388e-17 & {\bf10.531} &  9.853e-16  & {\bf8.437} &  1.651e-13  & 7.112 &  5.031e-12  & 6.589 & 1.389e-04 & 6.070&  1.897e-03   
            \\
\hline 8&1.388e-17 & 1.000 & 6.939e-18 & 1.000 &  6.939e-18 & 1.000 &  1.388e-17  & 6.150 &  1.256e-15  & 7.039 &  6.553e-14  & 6.263 & 1.389e-04 & -0.000&3.224e-03 
            \\
\hline 9&0& -& 3.469e-18 & 1.000 &  3.469e-18 & 1.000 &  1.735e-18  & 3.000 &  6.939e-18  & {\bf7.500} &  9.259e-16  & 6.145 & 1.389e-04 & 0.000&  6.051e-03    
\\
\hline 10&3.469e-18 &-& 1.735e-18 & 1.000 &  0& -&  1.735e-18  & 0.000 &  0 & -&  1.214e-17  & {\bf6.253} & 1.377e-04 & 0.012&  8.830e-03
\\
\hline
\end{tabular}
}
\caption{Grid refinement analysis for the new WENO-10 algorithm for the function in (\ref{experimento1}) and $\eta=0$.}\label{tabla5} 
\end{center}
\end{table}

\begin{table}[!ht]
\begin{center}
\resizebox{18cm}{!} {
\begin{tabular}{|c|c|c|c|c|c|c|c|c|c|c|c|c|c|c|c|c|c|c|c|c|c|c|c|c|c|c|c|}
\hline\multicolumn{1}{|c|}{ }&\multicolumn{2}{|c|}{$\cdots x_{2i-11}$} &\multicolumn{2}{|c|}{$x_{2i-9}$} & \multicolumn{2}{|c|}{$x_{2i-7}$} & \multicolumn{2}{|c|}{$x_{2i-5}$} & \multicolumn{2}{|c|}{$x_{2i-3}$}  & \multicolumn{2}{|c|}{$x_{2i-1}$}& \multicolumn{2}{|c|}{$x_{2i}\cdots$}&\multirow{ 2}{*}{Comp. t.}
              \\
\cline{1-15} $i$ &$e_i$ & $\log_2\left(\frac{e_i}{e_{i+1}}\right)$ &$e_i$ & $\log_2\left(\frac{e_i}{e_{i+1}}\right)$ & $e_i$ & $\log_2\left(\frac{e_i}{e_{i+1}}\right)$ & $e_i$ & $\log_2\left(\frac{e_i}{e_{i+1}}\right)$ & $e_i$ & $\log_2\left(\frac{e_i}{e_{i+1}}\right)$ & $e_i$ & $\log_2\left(\frac{e_i}{e_{i+1}}\right)$ & $e_i$ & $\log_2\left(\frac{e_i}{e_{i+1}}\right)$&
              \\
\hline 5&1.683e-11 & - & 1.143e-11 & - &  9.436e-10 & - &  4.340e-09  & - &  1.873e-08  & - &  6.983e-08  & - & 7.263e-03 & -& 8.538e-04  
            \\
\hline 6&3.886e-15 &{\bf 12.080} & 3.081e-15 & {\bf11.857} &  8.765e-12 & 6.750 &  3.566e-11  & 6.927 &  1.081e-10  & 7.436 &  4.843e-10  & 7.172 & 9.331e-03 & -0.362&6.782e-04 
            \\
\hline 7&1.388e-17 & 8.129 & 0& -&  9.374e-14 & 6.547 &  3.779e-13  & 6.560 &  1.135e-12  & 6.574 &  5.031e-12  & 6.589 & 1.389e-04 & 6.070&1.213e-03
            \\
\hline 8&1.388e-17 & 0.000 & 6.939e-18 &-&  1.263e-15 & 6.214 &  5.020e-15  & 6.234 &  1.493e-14  & 6.249 &  6.553e-14  & 6.263 & 1.389e-04 & -0.000& 1.894e-03 
            \\
\hline 9&0& -& 0& -&  1.735e-17 & {\bf6.186} &  7.286e-17  & {\bf6.107} &  2.134e-16  & 6.129 &  9.259e-16  & 6.145 & 1.389e-04 & 0.000&  3.582e-03 
\\
\hline 10&0& -& 1.735e-18 &-&  0& -&  1.735e-18  & 5.392 &  3.469e-18  & {\bf5.943} &  1.214e-17  & {\bf6.253} & 1.351e-04 & 0.041& 5.035e-03 
\\
\hline
\end{tabular}
}
\caption{Grid refinement analysis for the classical WENO-10 algorithm for the function in (\ref{experimento1}) and $\eta=0$.}\label{tabla6} 
\end{center}
\end{table}

%
%

\begin{table}[!ht]
\begin{center}
\resizebox{18cm}{!} {
\begin{tabular}{|c|c|c|c|c|c|c|c|c|c|c|c|c|c|c|c|}
\hline\multicolumn{1}{|c|}{ }&\multicolumn{2}{|c|}{$\cdots x_{2i-7}$} &\multicolumn{2}{|c|}{$x_{2i-5}$} & \multicolumn{2}{|c|}{$x_{2i-3}$} & \multicolumn{2}{|c|}{$x_{2i-1}$} & \multicolumn{2}{|c|}{$x_{2i+1}$}  & \multicolumn{2}{|c|}{$x_{2i+3}$}& \multicolumn{2}{|c|}{$x_{2i+5}\cdots$}&\multirow{ 2}{*}{Comp. t.}
              \\
\cline{1-15} $i$ &$e_i$ & $\log_2\left(\frac{e_i}{e_{i+1}}\right)$ &$e_i$ & $\log_2\left(\frac{e_i}{e_{i+1}}\right)$ & $e_i$ & $\log_2\left(\frac{e_i}{e_{i+1}}\right)$ & $e_i$ & $\log_2\left(\frac{e_i}{e_{i+1}}\right)$ & $e_i$ & $\log_2\left(\frac{e_i}{e_{i+1}}\right)$ & $e_i$ & $\log_2\left(\frac{e_i}{e_{i+1}}\right)$ & $e_i$ & $\log_2\left(\frac{e_i}{e_{i+1}}\right)$&
              \\
\hline 5&1.575e-08 & - & 7.924e-09 & - &  7.042e-07 & - &  4.343e-01  & - &  1.674e-06  & - &  3.041e-08  & - & 2.846e-08 & -&  4.581e-04
            \\
\hline 6&1.477e-10 & 6.737 & 8.152e-10 & 3.281 &  4.890e-08 & 3.848 &  4.579e-01  & -0.076 &  1.057e-07  & 3.985 &  2.929e-10  & 6.698 & 3.453e-10 & 6.365&  5.692e-04
            \\
\hline 7&1.731e-12 & 6.414 & 3.586e-11 & 4.507 &  3.319e-09 & 3.881 &  5.152e-01  & -0.170 &  6.714e-09  & 3.977 &  2.432e-11  & 3.590 & 4.732e-12 & 6.189&  1.214e-03
            \\
\hline 8&2.420e-14 & 6.160 & 1.205e-12 & 4.895 &  2.127e-10 & 3.964 &  5.078e-01  & 0.021 &  4.275e-10  & 3.973 &  1.046e-12  & 4.539 & 6.717e-14 & 6.139&  1.662e-03
            \\
\hline 9& 3.539e-16 & 6.096 & 3.884e-14 & 4.955 &  1.346e-11 & 3.982 &  5.041e-01  & 0.011 &  2.700e-11  & 3.985 &  3.653e-14  & 4.840 & 9.992e-16 & {\bf6.071}&  2.816e-03
\\
\hline 10&5.204e-18 & {\bf6.087} & 1.232e-15 & {\bf4.979 }&  8.466e-13 & {\bf3.991} &  5.022e-01  & 0.005 &  1.697e-12  & {\bf3.992 }&  1.221e-15  & {\bf4.903} & 1.110e-16 & 3.170&  4.345e-03
\\
\hline
\end{tabular}
}
\caption{Grid refinement analysis for the new WENO-6 algorithm for the function in (\ref{experimento1}) and $\eta=1$. }\label{tabla7}
\end{center}
\end{table}

\begin{table}[!ht]
\begin{center}
\resizebox{18cm}{!} {
\begin{tabular}{|c|c|c|c|c|c|c|c|c|c|c|c|c|c|c|c|c|}
\hline\multicolumn{1}{|c|}{ }&\multicolumn{2}{|c|}{$\cdots x_{2i-7}$} &\multicolumn{2}{|c|}{$x_{2i-5}$} & \multicolumn{2}{|c|}{$x_{2i-3}$} & \multicolumn{2}{|c|}{$x_{2i-1}$} & \multicolumn{2}{|c|}{$x_{2i+1}$}  & \multicolumn{2}{|c|}{$x_{2i+3}$}& \multicolumn{2}{|c|}{$x_{2i+5}\cdots$}&\multirow{ 2}{*}{Comp. t.}
              \\
\cline{1-15} $i$ &$e_i$ & $\log_2\left(\frac{e_i}{e_{i+1}}\right)$ &$e_i$ & $\log_2\left(\frac{e_i}{e_{i+1}}\right)$ & $e_i$ & $\log_2\left(\frac{e_i}{e_{i+1}}\right)$ & $e_i$ & $\log_2\left(\frac{e_i}{e_{i+1}}\right)$ & $e_i$ & $\log_2\left(\frac{e_i}{e_{i+1}}\right)$ & $e_i$ & $\log_2\left(\frac{e_i}{e_{i+1}}\right)$ & $e_i$ & $\log_2\left(\frac{e_i}{e_{i+1}}\right)$&
              \\
\hline 5& 1.574e-08 & - & 1.560e-07 & - &  7.042e-07 & - &  4.634e-01  & - &  1.674e-06  & - &  3.665e-07  & - & 2.851e-08 & -&  4.929e-04
            \\
\hline 6&1.477e-10 & 6.736 & 1.163e-08 & 3.746 &  4.890e-08 & 3.848 &  4.742e-01  & -0.033 &  1.057e-07  & 3.985 &  2.446e-08  & 3.906 & 3.453e-10 & 6.367&  6.711e-04
            \\
\hline 7&1.731e-12 & 6.414 & 7.821e-10 & 3.894 &  3.319e-09 & 3.881 &  5.067e-01  & -0.095 &  6.714e-09  & 3.977 &  1.560e-09  & 3.971 & 4.732e-12 & 6.189&  1.235e-03
            \\
\hline 8&2.420e-14 & 6.160 & 4.963e-11 & 3.978 &  2.127e-10 & 3.964 &  5.035e-01  & 0.009 &  4.275e-10  & 3.973 &  9.912e-11  & 3.976 & 6.728e-14 & 6.136&  1.567e-03
            \\
\hline 9& 3.539e-16 & 6.096 & 3.124e-12 & 3.990 &  1.346e-11 & 3.982 &  5.019e-01  & 0.005 &  2.700e-11  & 3.985 &  6.247e-12  & 3.988 & 8.882e-16 & {\bf6.243}&  2.527e-03
\\
\hline 10&5.204e-18 & {\bf6.087} & 1.959e-13 & {\bf3.995 }&  8.466e-13 & {\bf3.991} &  5.011e-01  & 0.002 &  1.697e-12  & {\bf3.992} &  3.920e-13  & {\bf3.994} & 1.110e-16 & 3.000&   4.038e-03
\\
\hline
\end{tabular}
}
\caption{Grid refinement analysis for the classical WENO-6 algorithm for the function in (\ref{experimento1}) and $\eta=1$.}\label{tabla8} 
\end{center}
\end{table}

\begin{table}[!ht]
\begin{center}
\resizebox{18cm}{!} {
\begin{tabular}{|c|c|c|c|c|c|c|c|c|c|c|c|c|c|c|c|c|c|c|c|}
\hline\multicolumn{1}{|c|}{ }&\multicolumn{2}{|c|}{$\cdots x_{2i-9}$} &\multicolumn{2}{|c|}{$x_{2i-7}$} & \multicolumn{2}{|c|}{$x_{2i-5}$} & \multicolumn{2}{|c|}{$x_{2i-3}$} & \multicolumn{2}{|c|}{$x_{2i-1}$}  & \multicolumn{2}{|c|}{$x_{2i+1}$}& \multicolumn{2}{|c|}{$x_{2i+3}\cdots$}&\multirow{ 2}{*}{Comp. t.}
              \\
\cline{1-15} $i$ &$e_i$ & $\log_2\left(\frac{e_i}{e_{i+1}}\right)$ &$e_i$ & $\log_2\left(\frac{e_i}{e_{i+1}}\right)$ & $e_i$ & $\log_2\left(\frac{e_i}{e_{i+1}}\right)$ & $e_i$ & $\log_2\left(\frac{e_i}{e_{i+1}}\right)$ & $e_i$ & $\log_2\left(\frac{e_i}{e_{i+1}}\right)$ & $e_i$ & $\log_2\left(\frac{e_i}{e_{i+1}}\right)$ & $e_i$ & $\log_2\left(\frac{e_i}{e_{i+1}}\right)$&
              \\
\hline 5&3.333e-11 & - & 1.963e-09 & - &  1.881e-08 & - &  4.447e-09  & - &  3.636e-01  & - &  3.940e-08  & - & 2.776e-08 & -& 5.207e-04  
            \\
\hline 6&2.557e-13 & 7.026 & 1.275e-11 & 7.267 &  1.663e-10 & 6.821 &  2.061e-09  & 1.110 &  4.152e-01  & -0.191 &  1.133e-09  & 5.120 & 3.104e-10 & 6.483& 7.497e-04  
            \\
\hline 7&6.106e-16 &{\bf 8.710} & 9.178e-14 & 7.118 &  1.713e-12 & 6.601 &  8.588e-11  & 4.585 &  5.379e-01  & -0.374 &  6.366e-11  & 4.153 & 3.810e-12 & 6.348& 1.391e-03 
            \\
\hline 8&6.939e-18 & 6.459 & 6.939e-16 & 7.047 &  2.212e-14 & 6.275 &  2.846e-12  & 4.916 &  5.194e-01  & 0.050 &  2.548e-12  & 4.643 & 4.741e-14 & 6.328&  2.160e-03  
            \\
\hline 9&0 &  - & 5.204e-18 & {\bf7.059} &  3.105e-16 & 6.155 &  9.115e-14  & 4.964 &  5.099e-01  & 0.027 &  8.682e-14  & 4.875 & 8.882e-16 & {\bf5.738}&   3.836e-03  
\\
\hline 10&1.735e-18 & -  & 0 &  - &  4.337e-18 & {\bf6.162} &  2.880e-15  & {\bf4.984} &  5.051e-01  & 0.014 &  2.887e-15  & {\bf4.911} & 1.110e-16 & 3.000&6.112e-03  
\\
\hline
\end{tabular}
}
\caption{Grid refinement analysis for the new WENO-8 algorithm for the function in (\ref{experimento1}) and $\eta=1$.}\label{tabla9} 
\end{center}
\end{table}

\begin{table}[!ht]
\begin{center}
\resizebox{18cm}{!} {
\begin{tabular}{|c|c|c|c|c|c|c|c|c|c|c|c|c|c|c|c|c|c|c|c|c|c|}
\hline\multicolumn{1}{|c|}{ }&\multicolumn{2}{|c|}{$\cdots x_{2i-9}$} &\multicolumn{2}{|c|}{$x_{2i-7}$} & \multicolumn{2}{|c|}{$x_{2i-5}$} & \multicolumn{2}{|c|}{$x_{2i-3}$} & \multicolumn{2}{|c|}{$x_{2i-1}$}  & \multicolumn{2}{|c|}{$x_{2i+1}$}& \multicolumn{2}{|c|}{$x_{2i+3}\cdots$}&\multirow{ 2}{*}{Comp. t.}
              \\
\cline{1-15} $i$ &$e_i$ & $\log_2\left(\frac{e_i}{e_{i+1}}\right)$ &$e_i$ & $\log_2\left(\frac{e_i}{e_{i+1}}\right)$ & $e_i$ & $\log_2\left(\frac{e_i}{e_{i+1}}\right)$ & $e_i$ & $\log_2\left(\frac{e_i}{e_{i+1}}\right)$ & $e_i$ & $\log_2\left(\frac{e_i}{e_{i+1}}\right)$ & $e_i$ & $\log_2\left(\frac{e_i}{e_{i+1}}\right)$ & $e_i$ & $\log_2\left(\frac{e_i}{e_{i+1}}\right)$&
              \\
\hline 5&4.966e-11 & - & 1.558e-09 & - &  4.137e-09 & - &  4.447e-09  & - &  4.455e-01  & - &  3.940e-08  & - & 2.211e-09 & -& 4.224e-04  
            \\
\hline 6&2.583e-13 & 7.587 & 1.554e-10 & 3.326 &  5.605e-10 & 2.884 &  2.061e-09  & 1.110 &  4.653e-01  & -0.063 &  1.133e-09  & 5.120 & 3.668e-10 & 2.592& 5.530e-04  
            \\
\hline 7&6.106e-16 & {\bf8.724} & 5.924e-12 & 4.713 &  2.193e-11 & 4.676 &  8.588e-11  & 4.585 &  5.111e-01  & -0.135 &  6.366e-11  & 4.153 & 1.692e-11 & 4.438& 1.070e-03  
            \\
\hline 8&0 &  - & 1.924e-13 & 4.944 &  7.171e-13 & 4.934 &  2.846e-12  & 4.916 &  5.057e-01  & 0.015 &  2.548e-12  & 4.643 & 6.493e-13 & 4.703& 1.596e-03  
            \\
\hline 9&0 & - & 6.113e-15 & 4.976 &  2.287e-14 & 4.971 &  9.115e-14  & 4.964 &  5.030e-01  & 0.008 &  8.682e-14  & 4.875 & 2.220e-14 & 4.870&  2.431e-03 
\\
\hline 10&1.735e-18 & -  & 1.926e-16 & {\bf4.989} &  7.225e-16 &{\bf 4.984} &  2.880e-15  &{\bf 4.984} &  5.016e-01  & 0.004 &  2.887e-15  & {\bf4.911} & 7.772e-16 &{\bf 4.837}& 4.763e-03 
\\
\hline
\end{tabular}
}
\caption{Grid refinement analysis for the classical WENO-8 algorithm for the function in (\ref{experimento1}) and $\eta=1$.}\label{tabla10} 
\end{center}
\end{table}

\begin{table}[!ht]
\begin{center}
\resizebox{18cm}{!} {
\begin{tabular}{|c|c|c|c|c|c|c|c|c|c|c|c|c|c|c|c|c|c|c|c|c|c|c|c|}
\hline\multicolumn{1}{|c|}{ }&\multicolumn{2}{|c|}{$\cdots x_{2i-11}$} &\multicolumn{2}{|c|}{$x_{2i-9}$} & \multicolumn{2}{|c|}{$x_{2i-7}$} & \multicolumn{2}{|c|}{$x_{2i-5}$} & \multicolumn{2}{|c|}{$x_{2i-3}$}  & \multicolumn{2}{|c|}{$x_{2i-1}$}& \multicolumn{2}{|c|}{$x_{2i}\cdots$}&\multirow{ 2}{*}{Comp. t.}
              \\
\cline{1-15} $i$ &$e_i$ & $\log_2\left(\frac{e_i}{e_{i+1}}\right)$ &$e_i$ & $\log_2\left(\frac{e_i}{e_{i+1}}\right)$ & $e_i$ & $\log_2\left(\frac{e_i}{e_{i+1}}\right)$ & $e_i$ & $\log_2\left(\frac{e_i}{e_{i+1}}\right)$ & $e_i$ & $\log_2\left(\frac{e_i}{e_{i+1}}\right)$ & $e_i$ & $\log_2\left(\frac{e_i}{e_{i+1}}\right)$ & $e_i$ & $\log_2\left(\frac{e_i}{e_{i+1}}\right)$&
              \\
\hline 5&1.762e-11 & - & 1.201e-11 & - &  2.408e-11 & - &  1.603e-10  & - &  3.585e-09  & - &  5.425e-08  & - & 2.985e-01 & -& 5.433e-04 
            \\
\hline 6&3.969e-15 & {\bf12.116} & 3.136e-15 & {\bf11.903} &  2.051e-14 & 10.197 &  3.414e-13  & 8.876 &  2.284e-11  & 7.294 &  4.842e-10  & 6.808 & 3.686e-01 & -0.304&  8.558e-04 
            \\
\hline 7&2.776e-17 & 7.160 & 1.388e-17 & 7.820 &  1.388e-17 & {\bf10.529} &  9.853e-16  & {\bf8.437} &  1.651e-13  & 7.112 &  5.031e-12  & 6.589 & 5.640e-01 & -0.614&1.669e-03  
            \\
\hline 8&1.388e-17 & 1.000 & 6.939e-18 & 1.000 &  6.939e-18 & 1.000 &  1.388e-17  & 6.150 &  1.256e-15  & 7.039 &  6.553e-14  & 6.263 & 5.329e-01 & 0.082&2.946e-03  
            \\
\hline 9&0 &  - & 3.469e-18 & 1.000 &  3.469e-18 & 1.000 &  1.735e-18  & 3.000 &  6.939e-18  & {\bf7.500} &  9.259e-16  & 6.145 & 5.167e-01 & 0.045& 5.356e-03  
\\
\hline 10&3.469e-18 & -  & 1.735e-18 & 1.000 &  0 &  - &  1.735e-18  & 0.000 &  0  &  - &  1.214e-17  & {\bf6.253} & 5.085e-01 & 0.023& 8.741e-03  
\\
\hline
\end{tabular}
}
\caption{Grid refinement analysis for the new WENO-10 algorithm for the function in (\ref{experimento1}) and $\eta=1$.}\label{tabla11} 
\end{center}
\end{table}

\begin{table}[!ht]
\begin{center}
\resizebox{18cm}{!} {
\begin{tabular}{|c|c|c|c|c|c|c|c|c|c|c|c|c|c|c|c|c|c|c|c|c|c|c|c|c|c|}
\hline\multicolumn{1}{|c|}{ }&\multicolumn{2}{|c|}{$\cdots x_{2i-11}$} &\multicolumn{2}{|c|}{$x_{2i-9}$} & \multicolumn{2}{|c|}{$x_{2i-7}$} & \multicolumn{2}{|c|}{$x_{2i-5}$} & \multicolumn{2}{|c|}{$x_{2i-3}$}  & \multicolumn{2}{|c|}{$x_{2i-1}$}& \multicolumn{2}{|c|}{$x_{2i}\cdots$}&\multirow{ 2}{*}{Comp. t.}
              \\
\cline{1-15} $i$ &$e_i$ & $\log_2\left(\frac{e_i}{e_{i+1}}\right)$ &$e_i$ & $\log_2\left(\frac{e_i}{e_{i+1}}\right)$ & $e_i$ & $\log_2\left(\frac{e_i}{e_{i+1}}\right)$ & $e_i$ & $\log_2\left(\frac{e_i}{e_{i+1}}\right)$ & $e_i$ & $\log_2\left(\frac{e_i}{e_{i+1}}\right)$ & $e_i$ & $\log_2\left(\frac{e_i}{e_{i+1}}\right)$ & $e_i$ & $\log_2\left(\frac{e_i}{e_{i+1}}\right)$&
              \\
\hline 5&1.683e-11 & - & 1.143e-11 & - &  9.434e-10 & - &  3.901e-09  & - &  1.192e-08  & - &  5.425e-08  & - & 4.077e-01 & -& 4.379e-04 
            \\
\hline 6&3.886e-15 & {\bf12.080} & 3.081e-15 & {\bf11.857} &  8.765e-12 & 6.750 &  3.566e-11  & 6.773 &  1.081e-10  & 6.785 &  4.842e-10  & 6.808 & 4.452e-01 & -0.127&6.103e-04 
            \\
\hline 7&1.388e-17 & 8.129 & 0 &  - &  9.374e-14 & 6.547 &  3.779e-13  & 6.560 &  1.135e-12  & 6.573 &  5.031e-12  & 6.589 & 5.213e-01 & -0.228& 1.043e-03
            \\
\hline 8&1.388e-17 & 0.000 & 6.939e-18 & -  &  1.263e-15 & 6.214 &  5.020e-15  & 6.234 &  1.493e-14  & 6.249 &  6.553e-14  & 6.263 & 5.109e-01 & 0.029& 1.610e-03
            \\
\hline 9&0 &  - & 0 &  - &  1.735e-17 & {\bf6.186} &  7.286e-17  & {\bf6.107 }&  2.134e-16  & 6.129 &  9.259e-16  & 6.145 & 5.056e-01 & 0.015&   3.243e-03
\\
\hline 10&0 & - & 1.735e-18 & -  &  0 &  - &  1.735e-18  & 5.392 &  3.469e-18  & {\bf5.943} &  1.214e-17  & {\bf6.253} & 5.029e-01 & 0.008& 4.713e-03  
\\
\hline
\end{tabular}
}
\caption{Grid refinement analysis for the classical WENO-10 algorithm for the function in (\ref{experimento1}) and $\eta=1$.}\label{tabla12} 
\end{center}
\end{table}

\section{Conclusions}\label{conc}
In this work we have generalized the algorithm introduced in \cite{WENO_nuevo, generalizacion} for data discretized in the point values. We have given explicit expressions for all the weights and we have proved in general that the accuracy attained with this new WENO-2r strategy is optimal for any value of $r$. We have also proposed an strategy to use the smoothness indicators of order $r$, i.e. those used by the classical WENO algorithm, as smoothness indicators of high order, using a tree structure, in order to optimize the computational cost of the new algorithm. We have presented numerical results that support the theoretical conclusions reached. We have also presented numerical estimations of the computational time that show that for low values of $r$ the new algorithm and the classical WENO algorithm perform similar.

\footnotesize

\bibliographystyle{unsrt}
\bibliography{bibliografia_bibdesk_no_doi_url}
\end{document}